\title{\textbf{Taming the pseudoholomorphic beasts in $\RR\times(S^1\times S^2)$}}
\author{\Large Chris Gerig}
\patchcmd{\epigraph}{\@epitext{#1}}{\itshape\@epitext{#1}}{}{}
\numberwithin{equation}{section}
\newtheorem{theorem}{Theorem}[section]
\newtheorem{prop}[theorem]{Proposition}
\newtheorem{lemma}[theorem]{Lemma}
\newtheorem{lemma-definition}[theorem]{Lemma-Definition}
\theoremstyle{definition}
\newtheorem{definition}[theorem]{Definition}
\newtheorem{remark}[theorem]{Remark}
\newtheorem{example}[theorem]{Example}
\renewcommand{\SS}{{\mathbb S}}
\newcommand{\CC}{{\mathbb C}}
\newcommand{\QQ}{{\mathbb Q}}
\newcommand{\RR}{{\mathbb R}}
\newcommand{\NN}{{\mathbb N}}
\newcommand{\ZZ}{{\mathbb Z}}
\newcommand{\nN}{{\mathcal N}}
\newcommand{\aA}{{\mathcal A}}
\newcommand{\cC}{{\mathcal C}}
\newcommand{\eE}{{\mathcal E}}
\newcommand{\mM}{{\mathcal M}}
\newcommand{\oO}{{\mathcal O}}
\newcommand{\pP}{{\mathcal P}}
\newcommand{\lL}{{\mathcal L}}
\newcommand{\op}{\operatorname}
\newcommand{\Spinc}{\op{Spin}^c}
\renewcommand{\ker}{\op{Ker}}
\renewcommand{\dbar}{\overline{\partial}}
\newcommand{\PD}{\op{PD}}
\newcommand{\tr}{\op{tr}}
\newcommand{\dv}{d\text{vol}}
\newcommand{\1}{\mathds{1}}
\newcommand{\e}{\varepsilon}
\newcommand{\ind}{\op{ind}}
\newcommand{\fo}{\mathfrak o}
\newcommand{\s}{\mathfrak s}
\newcommand{\Rel}{\op{Rel}_\omega}
\newcommand{\Hfrom}{\widehat{\mathit{HM}}}
    \def\Hto{%
       \setbox0=\hbox{$\widehat{\mathit{HM}}$}
       \setbox1=\hbox{$\mathit{HM}$}
       \dimen0=1.1\ht0
       \advance\dimen0 by 1.17\ht1
       \smash{\mskip2mu\raise\dimen0\rlap{%
          \begin{turn}{180}
              {$\widehat{\phantom{\mathit{HM}}}$}
           \end{turn}} \mskip-2mu    
                \mathit{HM}
    }{\vphantom{\widehat{\mathit{HM}}}}{}}
    \def\Cto{%
       \setbox0=\hbox{$\widehat{\mathit{CM}}$}
       \setbox1=\hbox{$\mathit{CM}$}
       \dimen0=1.1\ht0
       \advance\dimen0 by 1.17\ht1
       \smash{\mskip2mu\raise\dimen0\rlap{%
          \begin{turn}{180}
              {$\widehat{\phantom{\mathit{CM}}}$}
           \end{turn}} \mskip-2mu    
                \mathit{CM}
    }{\vphantom{\widehat{\mathit{CM}}}}{}}
\definecolor{blue}{rgb}{0,0,1}
\definecolor{red}{rgb}{1,0,0}
\definecolor{green}{rgb}{0,.7,0}
\begin{document}
\maketitle

\begin{abstract}
For a closed oriented smooth 4-manifold $X$ with $b^2_+(X)>0$, the Seiberg-Witten invariants are well-defined. Taubes' ``SW=Gr'' theorem asserts that if $X$ carries a symplectic form then these invariants are equal to well-defined counts of pseudoholomorphic curves, Taubes' Gromov invariants. In the absence of a symplectic form there are still nontrivial closed self-dual 2-forms which vanish along a disjoint union of circles and are symplectic elsewhere. This paper and its sequel describes well-defined counts of pseudoholomorphic curves in the complement of the zero set of such near-symplectic 2-forms, and it is shown that they recover the Seiberg-Witten invariants over $\ZZ/2\ZZ$. This is an extension of ``SW=Gr'' to non-symplectic 4-manifolds.

The main result of this paper asserts the following. Given a suitable near-symplectic form $\omega$ and tubular neighborhood $\nN$ of its zero set, there are well-defined counts of pseudoholomorphic curves in a completion of the symplectic cobordism $(X-\nN,\omega)$ which are asymptotic to certain Reeb orbits on the ends. They can be packaged together to form ``near-symplectic'' Gromov invariants as a function of spin-c structures on $X$.
\end{abstract}

\section{Introduction}

\subsection{Motivation}

\indent\indent
The Seiberg-Witten invariants, introduced by Witten \cite{Witten:monopoles}, are defined for any closed oriented 4-manifold $X$ with $b^2_+(X)\ge1$. These invariants $SW_X(\s)$ are constructed by counting solutions to the Seiberg-Witten equations associated with a given spin-c structure $\s\in\Spinc(X)$. When $b^2_+(X)>1$ the invariants only depend on the smooth structure of $X$ and a choice of ``homology orientation'' of $X$. When $b^2_+(X)=1$ the invariants also depend on a choice of ``chamber.''

The Gromov invariants, introduced by Taubes \cite{Taubes:counting} who generalized the work of Gromov \cite{Gromov} and Ruan \cite{Ruan:symplectic}, are defined for any closed symplectic 4-manifold $(X,\omega)$. These invariants $Gr_{X,\omega}(A)$ are constructed by counting pseudoholomorphic curves in $X$ which represent a given homology class $A\in H_2(X;\ZZ)$. The invariants only depend on the smooth structure of $X$.

Now, the existence of a symplectic structure implies that $b^2_+(X)>0$, and when $b^2_+(X)=1$ a given symplectic form determines a canonical ``chamber'' with which to define the Seiberg-Witten invariants of $X$. The symplectic form also determines a canonical ``homology orientation'' of $X$ and it induces a canonical identification of $H_2(X;\ZZ)$ with $\Spinc(X)$. Under these identifications it was shown by Taubes \cite{Taubes:SWGrBook} that the Gromov invariants and the Seiberg-Witten invariants are equivalent. But what occurs when $X$ does not have a symplectic structure? This was originally asked by Taubes during 1995 in a paper \cite{Taubes:SWGr} from which we quote the ending:

\begin{center}
{\small{\textit{``It is observed that there are manifolds with nonzero Seiberg-Witten invariants which do not admit symplectic forms. With this understood, one is led to ask whether there is any sort of ``Gromov invariant'' interpretation for the Seiberg-Witten invariants in the nonsymplectic world.''}}}
\end{center}

In the absence of a symplectic form but keeping $b^2_+(X)>0$, given a generic Riemannian metric on $X$ there are still nontrivial closed self-dual 2-forms which vanish transversally along a disjoint union of circles in $X$ and are symplectic elsewhere. These were studied intensively in unpublished work of Karl Luttinger during the early 1990's (see also \cite{LuttingerSimpson}). Such a 2-form $\omega$ is called a \textit{near-symplectic form}, and $X-\omega^{-1}(0)$ is a noncompact symplectic 4-manifold which has an almost complex structure $J$ determined by $\omega$ and the Riemannian metric. It was noticed by Taubes that the estimates on symplectic 4-manifolds which were used to find $J$-holomorphic curves from given Seiberg-Witten solutions could also be used to find $J$-holomorphic curves when $\omega^{-1}(0)\ne\varnothing$.

\begin{theorem}[Taubes \cite{Taubes:SWselfdual}]
\label{thm:Taubes}
If $X$ has a nonzero Seiberg-Witten invariant then there exists at least one $J$-holomorphic curve in $X-\omega^{-1}(0)$, homologically bounding $\omega^{-1}(0)$ in the sense that it has intersection number 1 with every linking 2-sphere of $\omega^{-1}(0)$.
\end{theorem}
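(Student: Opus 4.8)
The plan is to run Taubes' ``$SW\Rightarrow Gr$'' machinery with $\omega$ playing the role of a symplectic form on $X-\omega^{-1}(0)$ and to track carefully what happens near the vanishing circles. First I would fix a generic metric making $\omega$ vanish transversally, take the associated almost complex structure $J$ on $X-\omega^{-1}(0)$, and consider the Seiberg--Witten equations for a spin-c structure $\s$ with $SW_X(\s)\ne0$, perturbed in the style of Taubes by inserting a large multiple $r\omega$ into the self-dual curvature equation together with a generic small term for transversality. Since $SW_X(\s)\ne0$ there is a solution $(A_r,\Psi_r)$ for every $r$; writing $\Psi_r=\sqrt{r}\,(\alpha_r,\beta_r)$ in the $\omega$-decomposition of the spinor bundle that is valid away from $\omega^{-1}(0)$, the goal is to produce a $J$-holomorphic curve as a limit of the zero loci $\alpha_r^{-1}(0)$.

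Second, I would establish the a priori estimates. On the region where $|\omega|$ is bounded below --- that is, outside a fixed tubular neighborhood of $\omega^{-1}(0)$ --- Taubes' spinor and curvature bounds give $|\alpha_r|\le 1+o(1)$, $|\beta_r|=O(r^{-1})$, exponential decay of $1-|\alpha_r|$ and of $|\beta_r|$ away from $\alpha_r^{-1}(0)$, and a uniform bound on the $\omega$-area of $\alpha_r^{-1}(0)$ in terms of $c_1(\s)\cdot[\omega]$; these use only that $\omega$ is closed and self-dual and that the metric is $\omega$-compatible where $\omega\ne0$. The genuinely new point is a local analysis near each vanishing circle $\gamma$: using the transversality of the zero and the standard local model for a self-dual form near such a circle, I would show that $|\alpha_r|$ is forced to be small on a neighborhood of $\gamma$ whose size is controlled as $r\to\infty$, hence that $\alpha_r^{-1}(0)$ meets every small linking $2$-sphere of $\gamma$, and then compute the signed (or mod $2$) intersection number against the relative first Chern class in that model to pin it to exactly $1$.

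Third, I would pass to the limit. By the $\omega$-area bound the currents carried by $\alpha_r^{-1}(0)$, restricted to any compact subset of $X-\omega^{-1}(0)$, converge along a subsequence to a closed current $C$ that is a sum of irreducible $J$-holomorphic subvarieties with positive integer weights, finite on each compact subset, by Taubes compactness together with the local structure theory of pseudoholomorphic curves. This limit is non-empty precisely because the linking-sphere analysis of Step two forces $\alpha_r^{-1}(0)$ to be non-empty uniformly in $r$. For the homological conclusion I would use that the intersection number of $\alpha_r^{-1}(0)$ with a fixed linking $2$-sphere, which sits in a compact part of $X-\omega^{-1}(0)$, is independent of $r$ for $r$ large, equals $1$ by Step two, and is preserved under the current limit; this exhibits $C$ (some component of which is the asserted curve) as homologically bounding $\omega^{-1}(0)$.

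I expect Step two to be the main obstacle. Away from $\omega^{-1}(0)$ the argument is essentially a transcription of Taubes' symplectic proof, but where $\omega$ degenerates the ``symplectic volume form'' $r\omega$ that drives the concentration of $\Psi_r$ also degenerates, so the pointwise estimates lose their uniformity there and one must quantify precisely the scale on which $|\alpha_r|\to0$ near $\gamma$ and match it to the spin-c data to read off the linking number $1$. Non-emptiness of the limit then comes almost for free, so the whole theorem rests on understanding the behavior of the curves near the vanishing circles --- which is, fittingly, the business of the rest of the paper.
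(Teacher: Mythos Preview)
The paper does not contain a proof of this theorem: it is stated as a result of Taubes with the citation \cite{Taubes:SWselfdual} and is used only as motivation for the near-symplectic Gromov invariants developed later. There is therefore nothing in the paper to compare your proposal against.

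That said, your outline is a faithful sketch of the strategy in Taubes' cited work: perturb the Seiberg--Witten equations by a large multiple of the self-dual form $\omega$, obtain uniform estimates on compact subsets of $X-\omega^{-1}(0)$ where $|\omega|$ is bounded below, and pass to a limiting $J$-holomorphic current. You correctly identify Step two --- the analysis near the vanishing circles, where the estimates degenerate --- as the crux; Taubes' paper devotes most of its effort to exactly this local analysis in order to extract the intersection number $1$ with each linking sphere. If you wish to see the details, the reference is \cite{Taubes:SWselfdual}; the present paper takes the theorem as given.
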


This theorem provides evidence that there might be ``Gromov invariants'' for non-symplectic 4-manifolds, and that they might recover the Seiberg-Witten invariants. An elaboration can be found in \cite{Taubes:ICM,Taubes:geom}. But there are difficulties in constructing well-defined counts of such curves because $J$ becomes singular along $\omega^{-1}(0)$. That is, transversality and a Fredholm theory for the moduli spaces of curves in $X-\omega^{-1}(0)$ is hard to establish (see \cite{Taubes:currents,Taubes:moduli}). We can rephrase the problem (see \cite{Taubes:HWZ}), for which it is standard to complete $X-\omega^{-1}(0)$ by attaching symplectization ends and then the $J$-holomorphic curves satisfy certain asymptotic conditions. The boundary $Y$ of a tubular neighborhood of $\omega^{-1}(0)$ is a contact manifold, with contact form induced by $\omega$, and the $J$-holomorphic curves are asymptotic to periodic orbits of the Reeb flow induced by the contact form on $Y$. Models of these curves in the symplectization of $Y=S^1\times S^2$ are studied in \cite{Taubes:beasts,Taubes:puncturedspheres1,Taubes:puncturedspheres2,Mast}, and the aforementioned difficulties are now caused by the contact dynamics: the existence of certain Reeb orbits permits the existence of non-transverse multiply covered curves in the relevant moduli spaces (see Remark~\ref{I<0}).

As will be explained momentarily, we overcome the transversality difficulties by modifying the chosen neighborhood of $\omega^{-1}(0)$ and hence the contact dynamics on each component $S^1\times S^2$ of $Y$. We then pick out the appropriate $J$-holomorphic curves to count using embedded contact homology (ECH), a Floer theory constructed by Hutchings \cite{Hutchings:lectures}. ECH was originally motivated by the desire to find an analog of Taubes' equivalence $SW_X=Gr_{X,\omega}$ in dimension three, granted that a version of Seiberg-Witten Floer homology \cite{KronheimerMrowka:SWF} existed on the gauge theory side. These two homologies are now known to be isomorphic by Taubes \cite{Taubes:ECH=SWF1}, using the same techniques that were used to prove $SW_X=Gr_{X,\omega}$. All of this machinery plays a crucial role in this paper and its sequel \cite{Gerig:Gromov}.

\subsection{Near-symplectic geometry}
\label{Near-symplectic geometry}

\indent\indent
Throughout this paper, $(X,g)$ denotes a closed connected oriented smooth Riemannian 4-manifold with $b^2_+(X)\ge1$, where $b^2_+(X)$ denotes the dimension of any maximal positive-definite subspace of $H^2(X;\RR)$ under the intersection form on $X$. In particular, if $X$ is simply connected then the only spaces excluded by this assumption on $b^2_+(X)$ are 4-spheres and connected sums of $\overline{\CC P^2}$, and if $X=S^1\times M$ then the only spaces excluded are rational homology 3-spheres $M$.

Let $\omega\in\Omega^2(X;\RR)$ be a nontrivial closed self-dual (hence harmonic) 2-form. These always exist by Hodge theory, and in fact, the set of such 2-forms determines a subspace $H^2_+(X;\RR)\subset H^2(X;\RR)$ for which $b^2_+(X)=\dim H^2_+(X;\RR)$. A nice property of $\omega$ is that the complement of its zero set $Z:=\omega^{-1}(0)$ is symplectic,
$$\omega\wedge\omega=\omega\wedge\ast\omega=|\omega|^2\dv_g$$
but we cannot always expect $Z=\varnothing$, i.e. for $X$ to be symplectic. For starters, if a symplectic form existed then an almost complex structure could be built from it and the metric, forcing $1-b^1(X)+b^2_+(X)$ to have even parity by characteristic class theory. What we do know in general is that $Z$ cannot contain any open subset of $X$ \cite{DonaldsonKronheimer}*{Corollary 4.3.23}. 

For generic metrics, Honda \cite{Honda:transversality}*{Theorem 1.1} and LeBrun \cite{LeBrun:Yamabe}*{Proposition 1} have shown that there exist nontrivial closed self-dual 2-forms that vanish transversally as sections of the self-dual 3-plane subbundle $\bigwedge_+^2T^\ast X$ of $\bigwedge^2T^\ast X\to X$. Such 2-forms are examples of

\begin{definition}
A closed 2-form $\omega:X\to\bigwedge^2T^\ast X$ is \textit{near-symplectic} if for all points $x\in X$ either $\omega^2(x)>0$, or $\omega(x)=0$ and the rank of the gradient $\nabla\omega_x:T_xX\to\bigwedge^2T_x^\ast X$ is three.
\end{definition}

It follows from \cite{Auroux:singular}*{Proposition 1} that we can always find a metric for which a given near-symplectic form is self-dual, and we assume throughout this paper that such metrics have been chosen. It follows from the definition that $Z$ consists of a finite disjoint union of smooth embedded circles \cite{Perutz:near}*{Lemma 1.2}, and $Z$ is null-homologous while the individual zero-circles need not be \cite{Perutz:thesis}*{Proposition 1.1.26}. Moreover, the zero-circles are not all the same but come in two ``types'' depending on the behavior of $\omega$ near them:

Let $N_Z$ denote the normal bundle of $Z\subset X$, identified with the orthogonal complement $TZ^\perp$ such that $TX=TZ\oplus N_Z$. The gradient $\nabla\omega$ defines a vector bundle isomorphism $N_Z\to\bigwedge_+^2T^\ast X|_Z$, and we orient $N_Z$ such that $\nabla\omega$ is orientation-reversing. The orientation of $TX$ orients $\bigwedge_+^2T^\ast X$, so $TZ$ is subsequently oriented. As described in \cite{Taubes:currents,Perutz:near,Honda:local}, $\omega$ determines a particular subbundle decomposition
$$N_Z=L_Z\oplus L_Z^\perp$$
where $L_Z\to Z$ is a rank one line bundle. Explicitly, $L_Z$ is the negative-definite subspace with respect to the induced quadratic form on $N_Z$,
$$v\mapsto\langle\nabla_v\omega(\partial_0,\cdot),v\rangle$$
where $\partial_0\in\Gamma(TZ)$ is the unit-length oriented vector field. A zero-circle of $Z$ is called \textit{untwisted} if $L_Z$ restricted to that zero-circle is orientable, and \textit{twisted} otherwise.\footnote{The literature uses the terminology ``even and odd'' as well as ``orientable and non-orientable'' to describe the zero-circles, but the author has found this to be very confusing.}

By work of Luttinger, any given pair $(\omega,g)$ of closed self-dual 2-form and Riemannian metric can be modified so that $Z$ has any positive number of components (see \cite{Perutz:near,Taubes:Luttinger}), but as noted by Gompf (see \cite{Perutz:near}*{Theorem 1.8}), the number of untwisted zero-circles must have the same parity as that of $1-b^1(X)+b^2_+(X)$. We can use these modifications to get rid of all twisted zero-circles (see \cite{Perutz:near}*{Remark 1.9}), and explicit constructions of near-symplectic forms having only untwisted zero-circles are given in \cite{GayKirby,Scott:thesis} in terms of handlebody decompositions.

\begin{example}
The ``symplectic geometer's dream'' would be that $(\CC P^2,\omega_\text{FS})\#(\CC P^2,\omega_\text{FS})$ is symplectic, but this is not the case. The failure to symplectically glue the Fubini-Study 2-forms across the connect-sum region manifests itself in a near-symplectic form with a single untwisted zero-circle.
\end{example}

\subsection{Main results}
\label{Main results}

\indent\indent
Let $\omega$ be a near-symplectic form on $(X,g)$ whose zero set $Z$ has $N\ge0$ components, all of which are untwisted zero-circles. As we just mentioned, this can always be arranged.

\begin{remark}
The assumption that $Z$ has only untwisted zero-circles can be weakened in this paper to some extent. As will be clarified in the appendix, we may assume $Z$ to also have twisted zero-circles which are \textit{non-contractible} in $X$.
\end{remark}

Let $\nN$ denote a union of arbitrarily small tubular neighborhoods of the components of $Z\subset X$. Using Moser-type results of Honda \cite{Honda:local} (see also \cite{Taubes:HWZ}*{\S2e}), $\nN$ can be chosen so that the complement
$$(X_0,\omega):=(X-\nN,\omega|_{X-\nN})$$
is a symplectic manifold with contact-type boundary, where each boundary component is a copy of $(S^1\times S^2,e^{-1}\lambda_0)$. Here, $\lambda_0$ is an overtwisted contact form which is described in Section~\ref{Taubes' contact form}, and studied intensively by Taubes in order to characterize the pseudoholomorphic ``beasts'' living in the symplectization $\RR\times(S^1\times S^2)$.

\begin{remark}
The orientation of $S^1\times S^2$ as a contact 3-manifold disagrees with the orientation of $S^1\times S^2$ as a boundary component of $X_0$, so each boundary component of $X_0$ is \textit{concave} (if the orientations agreed then it would be a \textit{convex} boundary component). This is consistent with a well-known result of Eliashberg: an overtwisted contact 3-manifold cannot be the convex boundary of a symplectic 4-manifold, i.e. there are no symplectic fillings of this contact 3-manifold. But note that $(\nN,\omega)$ is a near-symplectic filling.
\end{remark}

There is a canonical spin-c structure $\s_\omega$ on $X_0$ whose positive spinor bundle is $\SS_+=\underline\CC\oplus K^{-1}$, where $\underline\CC\to X_0$ denotes the trivial complex line bundle and $K$ is the canonical bundle of $(X_0,J)$ for any chosen $\omega$-compatible almost complex structure $J$. Any other spin-c structure on $X_0$ differs from $\s_\omega$ by tensoring with a complex line bundle on $X_0$. It follows from Taubes' work on near-symplectic geometry that $\omega$ also induces a canonical identification of $\Spinc(X)$ with the set
$$\Rel(X):=\Big\{ A\in H_2(X_0,\partial X_0;\ZZ)\;\big\rvert\;\partial A=\1\in H_1(\partial X_0;\ZZ)\Big\}$$
where $\1$ is the oriented generator on each component (the orientation conventions are specified in Section~\ref{Taubes' contact form}). This correspondence is given by restricting $\s\in\Spinc(X)$ to $X_0$, which differs from $\s_\omega$ by a unique complex line bundle $L_\s\to X_0$, and taking the Poincar\'e-Lefschetz dual of $c_1(L_\s)$. Denote the resulting class by $A_\s\in\Rel(X)$.

Of relevance to the Seiberg-Witten invariants of $X$ will be $J$-holomorphic curves in a completion of $(X_0,\omega)$ which represent classes in $\Rel(X)$. The completion $(\overline X,\omega)$ of $(X_0,\omega)$ is obtained by attaching cylindrical ends to the components of $\partial X_0$ and extending $\omega$ over them (see Section~\ref{Curves with}). In order to obtain well-defined counts we need all curves to be transverse. Unfortunately, there are special Reeb orbits in $(S^1\times S^2,\lambda_0)$ for which there exist non-transverse multiply covered curves in $\overline X$ asymptotic to multiple covers of those orbits.\footnote{The problem is worse: these non-transverse curves also have negative ECH index (see Remark~\ref{I<0}).} What can we do?

A given class $A\in\Rel(X)$ determines an upper bound $\rho(A)$ of the ``lengths'' of the possible Reeb orbits which the relevant $J$-holomorphic curves are asymptotic to, so there are only finitely many orbits which permit the existence of non-transverse curves. The idea now is to go back and choose a different tubular neighborhood $\nN$, determining a different contact form on $S^1\times S^2$ having the same contact structure $\xi_0$ as $\lambda_0$. In particular, we search for a particular contact form whose Reeb orbits of ``lengths'' less than $\rho(A)$ do not permit the existence of non-transverse curves representing $A$. This is the content of the following lemma, proved in Section~\ref{Changing Taubes' contact form}.

\begin{lemma}
\label{lem:nbhdintro}
For $A\in\Rel(X)$, there exists a tubular neighborhood $\nN$ of $Z$ in $X$ such that the symplectic cobordism $(X-\nN,\omega)$ has contact-type boundary, where each boundary component is a copy of $(S^1\times S^2,\lambda_A)$. Here, $\lambda_A$ denotes a nondegenerate overtwisted contact form with contact structure $\xi_0$ such that its Reeb orbits of symplectic action less than $\rho(A)$ are $\rho(A)$-flat and are either positive hyperbolic or $\rho(A)$-positive elliptic.
\end{lemma}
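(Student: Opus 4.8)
The plan is to build $\lambda_A$ explicitly by a local model near $Z$, following Honda's Moser-type framework but with a carefully engineered radial profile. Recall that in Honda's local normal form a neighborhood of an untwisted zero-circle looks like $S^1\times B^3$ with $\omega$ given by an explicit formula in coordinates $(t,\mathbf{x})=(t,x_1,x_2,x_3)$, and the contact form induced on a hypersurface $\{f(|\mathbf x|)=\text{const}\}$ depends on the chosen function $f$. Taubes' form $\lambda_0$ arises from one particular choice; the freedom is that any convex hypersurface enclosing $Z$ transverse to the Liouville-type vector field gives a contact-type boundary with contact structure $\xi_0$ (independence of the hypersurface is the standard fact that contact-type boundaries of a fixed symplectic manifold have a well-defined contact structure up to isotopy). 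So the first step is to set up the one-parameter family of hypersurfaces and compute, as a function of the profile, the induced contact form, its Reeb vector field, the Reeb orbits, and their symplectic actions. This is essentially a calculation in the $S^1$-invariant model, and the Reeb dynamics reduces to a one-dimensional problem on $S^2$ (the base); the ``short'' Reeb orbits are the two constant orbits over the poles (elliptic) together with possibly some orbits fibering over latitude circles (hyperbolic or elliptic depending on a monotonicity/rotation-number condition on the profile).

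Second, I would isolate the finitely many ``bad'' orbits: since $A\in\Rel(X)$ caps off the generator on each boundary, the relevant $J$-holomorphic curves have an a priori symplectic-action (energy) bound $\rho(A)$ determined by pairing $\omega$ (suitably extended over the ends, with the $\omega$-period of $A$) against $A$; this is the standard ECH/SFT action-versus-area inequality. Consequently only orbits of action $<\rho(A)$ can be asymptotic limits, and in the $S^1$-invariant model these correspond to a finite list of latitude circles plus the two poles. Third — and this is the heart of the construction — I would perturb the profile so that (i) each of these finitely many orbits is nondegenerate, achieved by a $C^\infty$-small perturbation making the relevant return map have no eigenvalue $1$; (ii) the two polar orbits become elliptic with rotation number chosen so that their iterates up to the action cutoff $\rho(A)$ all have irrational (hence nondegenerate) rotation angle and, more importantly, so that no multiply covered holomorphic plane asymptotic to them can have vanishing normal Chern class — this is exactly the ``$\rho(A)$-positive elliptic'' condition, arranged by making the transverse rotation rate slightly positive and controlling it along the whole length of the orbit up to the cutoff (the ``$\rho(A)$-flat'' condition, i.e. the metric/contact data is literally translation-invariant near each such orbit up to action $\rho(A)$, which is what kills the non-transverse covers by Taubes' analysis of the beasts); and (iii) every other short orbit is positive hyperbolic, which one forces by choosing the profile strictly monotone across the corresponding latitude band so that the linearized return map is a positive real hyperbolic shear. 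The ``flatness'' in (ii) is the device that lets one quote the automatic-transversality / index computations for covers of elliptic orbits from the symplectization models in \cite{Taubes:beasts,Taubes:puncturedspheres1,Taubes:puncturedspheres2,Mast}.

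Finally, I would check that this local modification glues to a global near-symplectic picture: replacing $\nN$ by the new neighborhood bounded by the chosen hypersurfaces does not change $(X,\omega)$ away from $Z$, so $(X-\nN,\omega)$ is still a symplectic cobordism with the stated boundary, and the contact structure on each $S^1\times S^2$ component is $\xi_0$ because it is the contact-type boundary of the same symplectic germ around $Z$. I expect the main obstacle to be step three: simultaneously controlling \emph{all} iterates of the polar elliptic orbits up to action $\rho(A)$ — one must keep the rotation numbers in a good range for every multiple at once while also keeping the data genuinely translation-invariant (``$\rho(A)$-flat'') along those orbits, and verifying that this precise package is what rules out the negative-ECH-index non-transverse multiple covers. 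This requires pinning down the exact relationship between the profile's derivative data, the rotation numbers of the polar orbits, and the normal Chern numbers of the offending covered curves, which is the one genuinely delicate computation; everything else is bookkeeping in the $S^1$-invariant model plus standard contact-type and Moser arguments.
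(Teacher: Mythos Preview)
Your outline has the right shape (modify the contact form so that low-action orbits are controlled, then realize the new form as a contact-type hypersurface near $Z$), but there are two genuine gaps.

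First, a ``profile'' depending only on the latitude $\theta$ gives a contact form of the type $a_1(\theta)\,dt+a_2(\theta)\,d\varphi$, and such forms are always Morse--Bott: for each rational slope $\theta_0$ you get an entire torus $T(\theta_0)$ of Reeb orbits, never an isolated one. So your step (i)/(iii) --- ``choose the profile strictly monotone across the latitude band so the return map is hyperbolic'' --- cannot work as written; monotonicity of the profile does not break the $\varphi$-symmetry. What the paper actually does is first adjust the profile (Lemma~\ref{lem:f}) only to set the rotation class of the two \emph{exceptional} polar orbits to a small positive number, and then apply Bourgeois' Morse--Bott perturbation (Lemma~\ref{lem:Bourgeois}) to each torus $T(\theta_0)$ of action below the cutoff. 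Bourgeois' perturbation does \emph{not} produce only positive hyperbolic orbits: each torus splits into one positive hyperbolic orbit \emph{and} one elliptic orbit. The nontrivial point, which your proposal misses, is that the sign of the rotation number of this new elliptic orbit is governed by $a'\times a''(\theta_0)$; the technical condition $a'\times a''<0$ (checked to survive the profile modification of Lemma~\ref{lem:f}) is exactly what makes all these elliptic orbits $L$-positive for $\delta$ small. So the correct statement is not ``polar orbits elliptic, all others hyperbolic'' but ``all low-action orbits are either positive hyperbolic or $L$-positive elliptic,'' with both kinds arising away from the poles.

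Second, you have conflated ``$\rho(A)$-flat'' with ``$\rho(A)$-positive.'' They are independent conditions with different purposes. The $L$-positive condition is a constraint on the rotation class, $\theta\in(0,\aA(\gamma)/L)\bmod 1$, and its role is to pin $CZ_\tau(\gamma^m)=1$ for all $m<L/\aA(\gamma)$; this is what rules out the negative--ECH--index multiple covers (cf.\ Remark~\ref{I<0}). The $L$-flat condition is a separate modification of $(\lambda,J)$ in tubular neighborhoods of the low-action orbits, due to Taubes (Section~\ref{L-flat approximations}), which makes the asymptotic operator complex linear; it is needed for the comparison with Seiberg--Witten theory in the sequel, not for transversality of covers. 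In the paper these are applied in sequence: profile change for the poles, then Bourgeois for the tori, then Taubes' $L$-flat approximation, then a final generic perturbation away from the low-action orbits to achieve global nondegeneracy. The hypersurface realization at the end is as you say: write $\lambda_A=e^{F_A}\lambda_0$ and take the graph $\{s=F_A(x)\}$ inside a symplectization collar of the original $\partial\nN_\ast$.
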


The ``$\rho(A)$-positive'' condition on the elliptic orbits is a key ingredient to preventing the existence of (negative ECH index) non-transverse curves that represent $A$, and this was observed by Hutchings \cite{Hutchings:beyond} for more general symplectic cobordisms. The ``$\rho(A)$-flatness'' condition on $\lambda_A$ is needed in order to identify our counts of curves with the Seiberg-Witten invariants, as Taubes \cite{Taubes:ECH=SWF1} did for the isomorphism between ECH and Seiberg-Witten Floer cohomology.

\bigskip
Another issue is the possible existence of multiply covered holomorphic exceptional spheres, which is also an issue in Taubes' Gromov invariants for closed symplectic 4-manifolds. Recall that an \textit{exceptional sphere} in $X$ is an embedded smooth sphere of self-intersection $-1$, and $X$ is \textit{minimal} if there are no exceptional spheres. Let
$$\eE_\omega\subset H_2(X_0,\partial X_0;\ZZ)$$
denote the set of classes represented by \textit{symplectic} exceptional spheres in $X_0$. The issue with multiply covered spheres will be discussed in Section~\ref{ECH cobordism maps}, and it is avoided in the following two scenarios: Either $X$ is minimal, or $E\cdot A\ge-1$ for every $E\in\eE_\omega$.

\bigskip
The relevant counts of curves will a priori depend on a suitably generic choice of $J$ and they will be packaged together as elements in $ECH_\ast(-\partial X_0,\xi_0)$. Roughly speaking, the ECH chain complex over $\ZZ$ is generated by ``orbit sets'' which are finite sets of pairs $(\gamma,m)$ of Reeb orbits $\gamma$ in $(-\partial X_0,\xi_0)$ and multiplicities $m\in\NN$. To define invariants of $X$, we will first count $J$-holomorphic curves in $\overline X$ to obtain a cycle in the ECH chain complex: the asymptotics of each curve define a generator $\Theta$ of the ECH chain complex, and the integer coefficient attached to each such $\Theta$ is an integrally weighted count of the curves which are asymptotic to $\Theta$. Specifically,

\begin{theorem}
\label{main theorem}
Let $(X,\omega,A)$ be as specified above, such that $E\cdot A\ge-1$ for every symplectic exceptional class $E\in\eE_\omega$. Fix a nonnegative integer $I$ and an ordered set of equivalence classes $[\bar\eta]:=\big\lbrace[\eta_1],\ldots,[\eta_p]\big\rbrace\subset H_1(X;\ZZ)/\operatorname{Torsion}$, where $0\le p\le I$ such that $I-p$ is even. For suitably generic $J$ on $\overline X$, there is a well-defined element
$$Gr^I_{X,\omega,J}(A,[\bar\eta])\in ECH_\ast(-\partial X_0,\xi_0)$$
concentrated in a single grading $g(A,I)$. This element is given by integrally weighted counts of (disjoint and possibly multiply covered) $J$-holomorphic curves in $(\overline X,\omega)$ which satisfy the following: They are asymptotic to Reeb orbits with respect to $\lambda_A$ whose total homology class in each $S^1\times S^2$ component is the oriented generator; they represent the relative class $A$; they pass through an a priori chosen set of $\frac{1}{2}(I-p)$ base points in $X_0$; they pass through an a priori chosen ordered set of $p$ disjoint oriented loops in $X_0$ which represent $[\bar\eta]$; and they have ECH index $I$.
\end{theorem}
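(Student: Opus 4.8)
The plan is to mimic the architecture of Taubes' $SW=Gr$ and Hutchings-Taubes ECH-index machinery, now adapted to the completed symplectic cobordism $(\overline X,\omega)$ with the carefully engineered contact form $\lambda_A$ from Lemma~\ref{lem:nbhdintro}. First I would fix a generic $\omega$-compatible almost complex structure $J$ on $\overline X$ (cylindrical on the ends, compatible with $\lambda_A$ there) and consider the moduli space $\mM^I_{X,\omega,J}(A,[\bar\eta])$ of (possibly disconnected, possibly multiply covered) $J$-holomorphic curves representing $A$, asymptotic on each $S^1\times S^2$ end to an orbit set whose total homology class is $\1$, passing through the prescribed $\frac12(I-p)$ points and $p$ loops, and with ECH index $I$. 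The heart of the argument is showing that for generic $J$ this moduli space is a compact zero-dimensional manifold (with a well-defined weighting), so that the weighted count is finite; then one records, for each such curve, the generator $\Theta$ of the ECH chain complex of $(-\partial X_0,\xi_0)$ determined by its positive asymptotic limits, and assembles $Gr^I_{X,\omega,J}(A,[\bar\eta])=\sum_\Theta\big(\#\mM^I_{X,\omega,J}(A,[\bar\eta];\Theta)\big)\,\Theta$.

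The steps, in order: (i) \emph{Index and dimension formula.} Using the relative adjunction formula and the ECH index inequality (Hutchings) on the cobordism, show that for a curve $C$ of ECH index $I$ the Fredholm index and the constraint codimensions add up so that $C$ is rigid after imposing the $\frac12(I-p)$ point constraints and $p$ loop constraints; here the constraint that the total asymptotic homology class is $\1$ on each end is what makes $\rho(A)$ the relevant action bound, and one checks the grading $g(A,I)$ is well-defined and independent of the curve by the usual ECH index additivity under gluing/homology. (ii) \emph{Transversality.} For somewhere-injective curves, standard Sard-Smale arguments give transversality for generic $J$; the genuinely new input is ruling out the bad multiply covered curves. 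Here I would invoke precisely the design of $\lambda_A$: its short Reeb orbits are $\rho(A)$-flat and either positive hyperbolic or $\rho(A)$-positive elliptic, and by Hutchings' observation in \cite{Hutchings:beyond} the $\rho(A)$-positivity forces any multiply covered component asymptotic to covers of these orbits to have nonnegative ECH index with equality only in trivial-cylinder situations, so it cannot destroy rigidity or push the count to be ill-defined. The exceptional-sphere hypothesis $E\cdot A\ge-1$ for all $E\in\eE_\omega$ is used exactly as in Taubes' $Gr$ to exclude multiply covered exceptional spheres as components. (iii) \emph{Compactness.} Apply SFT/Gromov compactness in the completed cobordism: a sequence of index-$I$ curves converges to a holomorphic building whose main level lies in $\overline X$ and whose symplectization levels over $-\partial X_0$ have total ECH index $0$; by the ECH index inequality each such level is a union of trivial cylinders (the $\rho(A)$-flatness and positivity prevent nontrivial index-$0$ pieces below the action bound), so no breaking occurs and the limit is again a single index-$I$ curve in $\mM^I$. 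Point and loop constraints are preserved because the constraint marked points cannot escape down the ends (the geometry of $X_0$ is compact). (iv) \emph{Independence of auxiliary choices.} The element lies in homology-less generality here — it is literally a chain, not a homology class — so one only needs to check it does not depend on the choice of base points / representative loops within their classes, which follows from a cobordism-of-moduli-spaces argument (a generic path of constraints gives a compact $1$-manifold whose boundary computes the difference), and that it is valued in the single grading $g(A,I)$.

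The main obstacle I expect is step (ii)–(iii), the interplay of transversality and compactness for the \emph{multiply covered} curves asymptotic to the special Reeb orbits of $\lambda_0$/$\lambda_A$ — exactly the ``pseudoholomorphic beasts'' of the title. Ordinary ECH gets away with counting only somewhere-injective curves of ECH index $1$; here, because we are extracting a full Gromov-type count (the analog of Taubes' $Gr$, which genuinely counts certain multiple covers of tori and exceptional spheres with nontrivial weights), we must allow disconnected and multiply covered curves and assign the correct integer weights à la Taubes \cite{Taubes:counting}. Making this precise requires (a) the weighting scheme for multiply covered tori of square zero and their relation to the signed count, and (b) verifying that the $\rho(A)$-flat, $\rho(A)$-positive model near each $S^1\times S^2$ end genuinely eliminates the negative-ECH-index covered curves flagged in Remark~\ref{I<0}; this is where Lemma~\ref{lem:nbhdintro} is doing all the work and where I would spend the bulk of the proof, citing the explicit local structure of $\lambda_A$ established in Section~\ref{Changing Taubes' contact form} together with Hutchings' partition-condition analysis. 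Once those are in hand, finiteness, the grading statement, and independence of choices are comparatively routine adaptations of the ECH cobordism-map and Taubes $Gr$ formalism.
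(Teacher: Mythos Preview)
Your outline captures the right architecture and correctly identifies the key ingredients: the $\rho(A)$-positivity from Lemma~\ref{lem:nbhdintro}, Hutchings' ECH index inequality from \cite{Hutchings:beyond}, Taubes' weighting scheme for multiply covered tori, and the exceptional-sphere hypothesis. But there is a genuine gap in step~(iv). You write that the element ``is literally a chain, not a homology class,'' and that one only needs independence of base points and loops. This is wrong: the theorem asserts $Gr^I_{X,\omega,J}(A,[\bar\eta])\in ECH_\ast(-\partial X_0,\xi_0)$, a \emph{homology} group, so you must first prove that the chain $\Phi^I_{J,\bar z}(A,\bar\eta)$ is a \emph{cycle}, i.e.\ $\partial_{\mathrm{ECH}}\circ\Phi^I_{J,\bar z}(A,\bar\eta)=0$. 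This is Proposition~\ref{prop:cycle} in the paper and is not a formality. One analyzes the ends of $\mM_{I+1}(\varnothing,\Theta;A,\bar z,\bar\eta)$, and the broken limits may contain connector levels (branched covers of $\RR$-invariant cylinders) between the cobordism level and an ECH index~$1$ symplectization level, as well as multiply covered special planes and index-zero cylinders at hyperbolic orbits in the cobordism level. The required obstruction-bundle gluing count is that of Hutchings--Taubes \cite{HutchingsTaubes:gluing1}, supplemented by two new checks: a $d$-fold unbranched cover of a special plane glues in exactly one way (because $p^+_\gamma(d)=(1,\ldots,1)$ for a $\rho(A)$-positive elliptic $\gamma$), and multiply covered cylinders at hyperbolic orbits contribute zero by sign cancellation. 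Your step~(iii) asserts ``no breaking occurs,'' which is correct for the finiteness of $\mM_I$ itself but is precisely \emph{false} in the analysis needed for the cycle equation.

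Relatedly, independence of base points and loops holds only in homology: moving $z_k$ along a path $\gamma_k$ produces a chain homotopy $K_{\gamma_k}$ with $\partial_{\mathrm{ECH}}\circ K_{\gamma_k}=\Phi^I_{J,\bar z}(A,\bar\eta)-U_{y_k}\circ\Phi^{I-2}_{J,\bar z-z_k}(A,\bar\eta)$, so the chains genuinely differ. Your cobordism-of-moduli-spaces sketch is the right mechanism for this, but it presupposes the cycle property you omitted. A smaller omission: the paper also needs a weight for multiply covered \emph{special planes} (not just tori), and establishes $r(C,d)=1$ for these via automatic super-rigidity (Lemma~\ref{lemma:autoBranch}) using the $L$-flatness of the asymptotic operator; this is where $\rho(A)$-flatness, which you did not use, actually enters the curve-counting side.
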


Here, the ``ECH index'' may be viewed as a formal dimension of the relevant moduli space of $J$-holomorphic currents, while the point/loop constraints carve out a zero-dimensional subset of the moduli space. These curves are explicitly specified in Proposition~\ref{prop:moduli}.

\bigskip
Now, a given $\s\in\Spinc(X)$ determines not only the class $A_\s\in\Rel(X)$ but also an integer
$$d(\s):=\frac{1}{4}\left(c_1(\s)^2-2\chi(X)-3\sigma(X)\right)$$
where $c_1(\s)$ denotes the first Chern class of the spin-c structure's positive spinor bundle, $\chi(X)$ denotes the Euler characteristic of $X$, and $\sigma(X)$ denotes the signature of $X$. This integer $d(\s)$ is the formal dimension of the moduli space of Seiberg-Witten solutions on $X$ with respect to $\s$. 

Using ideas from Seiberg-Witten theory,\footnote{We may also be able to establish this fact using $J$-holomorphic curves alone.} we will show in \cite{Gerig:Gromov} that $g(A_\s,d(\s))$ is the lowest grading for which $ECH_\ast(-\partial X_0,\xi_0)$ is nonzero. After choosing an ordering of the components of $Z$, there is then an identification of the group $ECH_{g(A_\s,d(\s))}(\partial X_0,\xi_0)$ with $\ZZ$ (see Proposition~\ref{prop:ECHSWF}, Remark~\ref{ordering}, and Lemma~\ref{lemma:gradings}).

\begin{definition}
\label{defn:Gr}
Fix an ordering of the zero-circles of $\omega$. The \textit{near-symplectic Gromov invariants}
$$Gr_{X,\omega,J}:\Spinc(X)\to\Lambda^\ast H^1(X;\ZZ)$$
are defined as follows. If $E\cdot A_\s<-1$ for some $E\in\eE_\omega$ then $Gr_{X,\omega,J}(\s)=0$. Assume now that $E\cdot A_\s\ge-1$ for every $E\in\eE_\omega$ (which is a vacuous statement when $X$ is minimal). The component of $Gr_{X,\omega,J}(\s)$ in $\Lambda^pH^1(X;\ZZ)$, for $p\le d(\s)$ such that $d(\s)-p$ is even, is
$$[\eta_1]\wedge\cdots\wedge[\eta_p]\mapsto Gr^{d(\s)}_{X,\omega,J}(A_\s,[\bar\eta])\in\ZZ$$
and it is defined to be zero for all other integers $p$.
\end{definition}

These invariants currently depend on the choice of $\omega$ and $J$, though we expect to establish invariance by varying $(\omega,J)$ and analyzing the resulting moduli spaces of curves. With $\ZZ/2\ZZ$ coefficients and $X$ minimal, we will show in \cite{Gerig:Gromov} that they recover the Seiberg-Witten invariants of $X$, hence are smooth invariants of $X$ alone.\footnote{There is also a version of this result when $X$ is not minimal (see \cite{Gerig:Gromov}).}

\begin{theorem}[Gerig \cite{Gerig:Gromov}]
\label{thm:SWGr}
Let $(X,\omega)$ be as specified above, with $X$ minimal. Given $\s\in\Spinc(X)$,
$$Gr_{X,\omega}(\s)=SW_X(\s)\in\Lambda^\ast H^1(X;\ZZ)\otimes\ZZ/2\ZZ$$
where $\omega$ determines the chamber for defining the Seiberg-Witten invariants when $b^2_+(X)=1$.
\end{theorem}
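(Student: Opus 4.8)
The plan is to deduce Theorem~\ref{thm:SWGr} from the main constructions of this paper together with Taubes' isomorphism $ECH_\ast(Y,\xi)\cong\widehat{HM}^{-\ast}(Y,\s_\xi)$ and the Seiberg-Witten-theoretic arguments underlying ``$SW=Gr$.'' First I would set up the Seiberg-Witten side on the closed manifold $X$ relative to the near-symplectic form: following Taubes' analysis in \cite{Taubes:SWselfdual,Taubes:ECH=SWF1}, one rescales the form $\omega$ by a large parameter $r$ and studies the $r\to\infty$ limit of the solutions to the perturbed Seiberg-Witten equations for the spin-c structure $\s$. Near $Z=\omega^{-1}(0)$ the solutions concentrate along $Z$ in a controlled way, and in the symplectic region $X_0$ the spinor's zero set limits onto a $J$-holomorphic curve in $(\overline X,\omega)$ representing precisely the class $A_\s\in\Rel(X)$ determined by $\s$, with asymptotics on $-\partial X_0$ recording a generator of the ECH chain complex. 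The key quantitative input is that the expected dimension $d(\s)$ of the Seiberg-Witten moduli space matches the ECH index $I=d(\s)$ of the limiting current, so the constraints (base points, loops) translate correctly between the two theories.

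The second step is to promote this limiting procedure to an identification of chain-level invariants over $\ZZ/2\ZZ$. Here I would invoke that $ECH_\ast(-\partial X_0,\xi_0)\cong\widehat{HM}^{-\ast}(-\partial X_0,\s_{\xi_0})$ and that a symplectic cobordism with concave boundary $(X_0,\omega)$, suitably completed, induces a ``SW cobordism map'' on $\widehat{HM}$; the Seiberg-Witten count on $X$ with spin-c structure $\s$ is, by a gluing/excision argument along the tubular neighborhood $\nN$, the pairing of this cobordism map (applied to a canonical generator of $\widehat{HM}$ of the neighborhood side) with a distinguished class. On the ECH side, the element $Gr^{d(\s)}_{X,\omega,J}(A_\s,[\bar\eta])$ is by construction the corresponding integrally weighted curve count in grading $g(A_\s,d(\s))$. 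The matching of these two numbers mod $2$ is then exactly the content of Taubes' curves-from-solutions and solutions-from-curves analysis, carried out in the present near-symplectic setting; the ``$\rho(A)$-flat'' condition on $\lambda_A$ from Lemma~\ref{lem:nbhdintro} is what makes Taubes' identification of the two differentials applicable, and the ``$\rho(A)$-positive elliptic'' condition guarantees that no spurious negative-ECH-index multiple covers appear to spoil the count.

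The third step handles the bookkeeping: one must check that $g(A_\s,d(\s))$ is the bottom nonzero grading of $ECH_\ast(-\partial X_0,\xi_0)$, so that the relevant group is identified with $\ZZ$ (after ordering the zero-circles), and that under this identification the map $[\eta_1]\wedge\cdots\wedge[\eta_p]\mapsto Gr^{d(\s)}_{X,\omega,J}(A_\s,[\bar\eta])$ assembles into the element $SW_X(\s)\in\Lambda^\ast H^1(X;\ZZ)\otimes\ZZ/2\ZZ$. This uses the structure of $\widehat{HM}$ of $\#^N(S^1\times S^2)$-type manifolds and the fact that the $H^1$-action on Seiberg-Witten and on the curve counts are intertwined by the limiting construction. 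When $b^2_+(X)=1$, one checks that the large-$r$ rescaling of $\omega$ selects exactly the chamber determined by $\omega$, since the relevant reducible wall is crossed according to the sign of $\int_X\omega\wedge c_1(\s)$ (or the appropriate period), matching the standard symplectic convention. Minimality of $X$ is used precisely to rule out the multiply covered exceptional spheres discussed before Theorem~\ref{main theorem}, so that the curve counts are honestly well-defined without correction terms.

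The main obstacle I expect is the second step: establishing the precise chain-level correspondence between Seiberg-Witten solutions on $X$ and the $J$-holomorphic currents counted by $Gr^I_{X,\omega,J}$, uniformly as $r\to\infty$ and in the presence of the non-compact ends and the degenerating almost complex structure near $Z$. This requires adapting Taubes' compactness, exponential decay, and transversality estimates from \cite{Taubes:SWGrBook,Taubes:ECH=SWF1} to the near-symplectic cobordism with its overtwisted concave ends, controlling the behavior of solutions in the neck region $\nN$ and matching the asymptotic Reeb-orbit data, and verifying that the weighted curve counts (including the integer weights coming from the obstruction bundles / spectral flow) agree mod $2$ with the Seiberg-Witten signs. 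This is the technical heart of the sequel \cite{Gerig:Gromov}, and the restriction to $\ZZ/2\ZZ$ coefficients is what allows one to sidestep the orientation subtleties that would otherwise arise in this comparison.
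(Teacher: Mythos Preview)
This theorem is not proved in the present paper: it is explicitly attributed to the sequel \cite{Gerig:Gromov}, and the paper only announces the result and gives hints about the strategy (see the remarks surrounding Definition~\ref{defn:Gr}, Section~\ref{L-flat approximations}, and the start of Section~\ref{ECH cobordism maps}). There is therefore no proof here to compare your proposal against.

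That said, your outline is consistent with what the paper foreshadows. The paper indicates that the proof in \cite{Gerig:Gromov} will go through Taubes' isomorphism~\eqref{eqn:Taubes} on the $L$-filtered chain level (this is exactly why the $\rho(A)$-flatness of $\lambda_A$ is built into Lemma~\ref{lem:nbhd}); that the ECH cobordism maps~\eqref{eqn:chainmap} are defined via Seiberg-Witten instantons on $\overline X$ and that the curve counts defining $\Phi^I_{J,\bar z}(A,\bar\eta)$ are to be shown to agree with those; and that the identification $g(A_\s,d(\s))=N[\xi_\ast]$ (hence $ECH_{g(A_\s,d(\s))}\cong\ZZ$) is also deferred to \cite{Gerig:Gromov}. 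Your three steps track these announced ingredients closely, and your diagnosis of the main obstacle --- adapting Taubes' ``curves $\leftrightarrow$ solutions'' estimates to the completed near-symplectic cobordism with overtwisted concave ends, and matching weights mod $2$ --- is precisely what the paper flags as the content of the sequel. One small correction: minimality of $X$ is not needed for the curve counts in Theorem~\ref{main theorem} to be well-defined (the hypothesis $E\cdot A\ge -1$ suffices there); rather, Definition~\ref{defn:Gr} simply \emph{sets} $Gr_{X,\omega,J}(\s)=0$ when some $E\cdot A_\s<-1$, and minimality in Theorem~\ref{thm:SWGr} is what ensures this convention never kicks in and is consistent with $SW_X$.
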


\begin{remark}
We expect that Theorem~\ref{thm:SWGr} also holds with $\ZZ$ coefficients, similarly to how Taubes' isomorphisms (between embedded contact homology and a version of Seiberg-Witten Floer homology) with $\ZZ/2\ZZ$ coefficients can be lifted to $\ZZ$ coefficients (see \cite{Taubes:ECH=SWF3}*{\S3.b}). Note that the definition of $SW_X$ depends on a choice of homology orientation of $X$, while the definition of $Gr_{X,\omega}$ depends on a choice of ordering of the zero-circles of $\omega$. We expect that $\omega$, with a fixed ordering of its zero-circles, canonically determines a homology orientation of $X$ (see \cite{Gerig:Gromov} for an elaboration).
\end{remark}

Definition~\ref{defn:Gr} is an extension of Taubes' Gromov invariants \cite{Taubes:counting}, which were constructed in the setting $Z=\varnothing$. To see how Taubes' invariants fit into the framework of Theorem~\ref{main theorem}, we first set $\partial X_0=\varnothing$ so that $(\overline X,\omega)=(X,\omega)$ is a closed symplectic 4-manifold. We then make the identification $ECH_0(\varnothing,0)\cong\ZZ$ whose generator is the empty set of orbits. For a given $\s\in\Spinc(X)$, the corresponding absolute class $A_\s\in H_2(X;\ZZ)$ determines the ECH index
$$I(A_\s):=c_1(TX)\cdot A_\s+A_\s\cdot A_\s\in2\ZZ$$
which equals $d(\s)$, and Taubes' invariants may be interpreted as
$$Gr^{I(A_\s)}_{X,\omega,J}(A_\s,[\bar\eta])\in ECH_0(\varnothing,0)$$
since all $J$-holomorphic curves have no punctures.

\begin{remark}
If $X$ is a symplectic manifold and $\omega_0$ is a near-symplectic form, we may consider a deformation of $\omega_0$ to a symplectic form $\omega_1$ through near-symplectic forms (except at a finite number of times along the deformation). The near-symplectic Gromov invariants of $(X,\omega_0)$ must equal Taubes' Gromov invariants of $(X,\omega_1)$ in light of Theorem~\ref{thm:SWGr} and \cite{Taubes:SWGr}*{Theorem 4.1}. The relation between the respective pseudoholomorphic curves is not pursued in this paper, but we leave the reader with a question: When a zero-circle shrinks and dies, does a holomorphic plane decrease its symplectic area and vanish, or does it close up into a sphere, or else?
\end{remark}

\begin{remark}
With respect to Theorem~\ref{thm:Taubes}, we expect that the near-symplectic Gromov invariants are related to counts of $J$-holomorphic curves in $X-Z$ by shrinking $\nN$ to its core $Z$. These curves would homologically bound $Z$ in the sense that they have algebraic intersection number $1$ with every linking 2-sphere of $Z$. A curve may also ``pinch off'' along $Z$ in the sense that a portion of the curve forms a multi-sheeted cone in a small neighborhood of a point on $Z$. These properties correspond to the following facts: a relevant $J$-holomorphic curve in $\overline{X-\nN}$ is asymptotic to an orbit set on each $S^1\times S^2$ whose total homology class is the oriented generator of $H_1(S^1\times S^2;\ZZ)$, and some orbits in that set may be contractible. This is in agreement with Taubes' study \cite{Taubes:currents} of the structure of such curves in the vicinity of $Z$.
\end{remark}

\subsection{Brief outline}

\indent\indent
What follows is an outline of the remainder of the paper. Section~\ref{Review of pseudoholomorphic curve theory} consists of a review of ECH, with a clarification in Section~\ref{L-flat approximations} of ``$L$-flat approximations'' which were defined and used in Taubes' isomorphisms \cite{Taubes:ECH=SWF1}. Section~\ref{Taubes' contact form} consists of the relevant facts of the contact 3-manifold $(S^1\times S^2,\lambda_0$). Section~\ref{Changing Taubes' contact form} specifies how the overtwisted contact form $\lambda_0$ is to be modified, through a particular change in the tubular neighborhood $\nN$ of $Z$. Section~\ref{ECH cobordism maps} clarifies the a priori difficulties with constructing well-defined counts of curves in $X_0=X-\nN$; in particular, we have to separate the cases where the relative classes $A\in\Rel(X)$ may be represented by multiply covered exceptional spheres. Section~\ref{When there are no multiply covered exceptional spheres} specifies the relevant $J$-holomorphic curves when there are no multiply covered exceptional spheres. Section~\ref{Orientations and weights} specifies the integer weights that are assigned to the $J$-holomorphic curves for the definition of $Gr^I_{X,\omega,J}(A,[\bar\eta])$. Section~\ref{Equations for chain maps and chain homotopies} establishes the independence of choices of base points and loops that are used to define the relevant moduli spaces of $J$-holomorphic curves. Section~\ref{Gradings} clarifies what $Gr^I_{X,\omega,J}(A,[\bar\eta])$ looks like as a class in ECH. Lastly, the appendix clarifies what changes are to be made in this paper when $\omega$ is chosen to have twisted zero-circles.

\subsection*{Acknowledgements}

\indent\indent
The author is indebted to Michael Hutchings and Clifford Taubes -- their guidance and ideas were valuable, crucial, and appreciated. The author also thanks Tomasz Mrowka for helping to understand certain aspects of monopole Floer homology. This paper forms part of the author's Ph.D. thesis. The author was partially supported by NSF grants DMS-1406312, DMS-1344991, DMS-0943745. The author thanks Harvard University for its hospitality during a part of Fall 2015.

\section{Review of pseudoholomorphic curve theory}
\label{Review of pseudoholomorphic curve theory}

\indent\indent
The point of this section is to introduce most of the terminology and notations that appear in the later sections. Further information and more complete details are found in \cite{Hutchings:lectures}.

\subsection{Orbits with $\lambda$}
\label{Orbits with}

\indent\indent
Let $(Y,\lambda)$ be a closed contact 3-manifold, oriented by $\lambda\wedge d\lambda>0$. Let $\xi=\ker\lambda$ be the contact structure, oriented by $d\lambda$. Equivalently, $\xi$ is co-oriented by the \textit{Reeb vector field} $R$ determined by $d\lambda(R,\cdot)=0$ and $\lambda(R)=1$. A Reeb orbit is a map $\gamma:\RR/T\ZZ\to Y$ for some $T>0$ with $\gamma'(t)=R(\gamma(t))$, modulo reparametrization, which is necessarily an $m$-fold cover of an embedded Reeb orbit for some $m\ge1$. A given Reeb orbit is \textit{nondegenerate} if the linearization of the Reeb flow around it does not have 1 as an eigenvalue, in which case the eigenvalues are either on the unit circle (such $\gamma$ are \textit{elliptic}) or on the real axis (such $\gamma$ are \textit{hyperbolic}). Assume from now on that $\lambda$ is \textit{nondegenerate}, i.e. all Reeb orbits are nondegenerate, which is a generic property of contact forms. 

An \textit{orbit set} is a finite set of pairs $\Theta=\lbrace(\Theta_i,m_i)\rbrace$ where the $\Theta_i$ are distinct embedded Reeb orbits and the $m_i$ are positive integers (this set may be empty). An orbit set is $\textit{admissible}$ if $m_i=1$ whenever $\Theta_i$ is hyperbolic. Its symplectic action (or ``length'') is defined by
$$\aA(\Theta):=\sum_im_i\int_{\Theta_i}\lambda\ge0$$
and its homology class is defined by
$$[\Theta]:=\sum_im_i[\Theta_i]\in H_1(Y;\ZZ)$$
For a given $\Gamma\in H_1(Y;\ZZ)$, the ECH chain complex $ECC_\ast(Y,\lambda,J,\Gamma)$ is freely generated over $\ZZ$ by equivalence classes of pairs $(\Theta,\fo)$, where $\Theta$ is an admissible orbit set satisfying $[\Theta]=\Gamma$ and $\fo$ is a choice of ordering of the positive hyperbolic orbits and a $\ZZ/2\ZZ$ choice for each such orbit, such that $(\Theta,\fo)=-(\Theta,\fo')$ if $\fo$ and $\fo'$ differ by an odd permutation. We will suppress the notation of the orientation choices $\fo$. The differential $\partial_\text{ECH}$ will be defined momentarily.

\subsection{Curves with $J$}
\label{Curves with}

\indent\indent
Given two contact manifolds $(Y_\pm,\lambda_\pm)$, possibly disconnected or empty, a \textit{strong symplectic cobordism} from $(Y_+,\lambda_+)$ to $(Y_-,\lambda_-)$ is a compact symplectic manifold $(X,\omega)$ with oriented boundary
$$\partial X=Y_+\sqcup -Y_-$$
such that $\omega|_{Y_\pm}=d\lambda_\pm$.
We can always find neighborhoods $N_\pm$ of $Y_\pm$ in $X$ diffeomorphic to $(-\e,0]\times Y_+$ and $[0,\e)\times Y_-$, such that $\omega|_{N_\pm}=d(e^{\pm s}\lambda_\pm)$ where $s$ denotes the coordinate on $(-\e,0]$. We then glue symplectization ends to $X$ to obtain the \textit{completion}
$$\overline X:=\big((-\infty,0]\times Y_-\big)\cup_{Y_-}X\cup_{Y_+}\big([0,\infty)\times Y_+\big)$$
of $X$, a noncompact symplectic 4-manifold whose symplectic form is also denoted by $\omega$. We will also use the notation $\overline X$ to denote the symplectization $\RR\times Y$ of $(Y,\lambda)$, with $\omega=d(e^s\lambda)$.

An almost complex structure $J$ on a symplectization $(\RR\times Y,d(e^s\lambda))$ is \textit{symplectization-admissible} if it is $\RR$-invariant; $J(\partial_s)=R$; and $J(\xi)\subseteq\xi$ such that $d\lambda(v,Jv)\ge 0$ for $v\in\xi$. An almost complex structure $J$ on the completion $\overline X$ is \textit{cobordism-admissible} if it is $\omega$-compatible on $X$ and agrees with symplectization-admissible almost complex structures on the ends $[0,\infty)\times Y_+$ and $(-\infty,0]\times Y_-$.

\begin{remark}
With respect to the Riemannian metric defined by the symplectic form and the admissible almost complex structure, the symplectic form is a self-dual harmonic 2-form.
\end{remark}

Given a cobordism-admissible $J$ on $\overline X$ and orbit sets $\Theta^+=\lbrace(\Theta^+_i,m^+_i)\rbrace$ in $Y_+$ and $\Theta^-=\lbrace(\Theta^-_j,m^-_j)\rbrace$ in $Y_-$, a \textit{$J$-holomorphic curve $\cC$ in $\overline X$ from $\Theta^+$ to $\Theta^-$} is defined as follows. It is a $J$-holomorphic map $\cC\to \overline X$ whose domain is a possibly disconnected punctured compact Riemann surface, defined up to composition with biholomorphisms of the domain, with positive ends of $\cC$ asymptotic to covers of $\Theta^+_i$ with total multiplicity $m^+_i$, and with negative ends of $\cC$ asymptotic to covers of $\Theta^-_j$ with total multiplicity $m^-_j$ (see \cite{Hutchings:lectures}*{\S3.1}). The moduli space of such curves is denoted by $\mM(\Theta^+,\Theta^-)$, but where two such curves are considered equivalent if they represent the same current\footnote{For example, if $\cC$ is a $d$-fold cover of an embedded curve $C$, then the associated current is the $\RR$-valued functional on $\Omega^2(\overline X)$ given by $\sigma\mapsto d\int_C\sigma$. In particular, all branching data of the cover has been lost.} in $\overline X$, and in the case of a symplectization $\overline X=\RR\times Y$ the equivalence includes translation of the $\RR$-coordinate. An element $\cC\in\mM(\Theta^+,\Theta^-)$ can thus be viewed as a finite set of pairs $\lbrace(C_k,d_k)\rbrace$ or formal sum $\sum d_kC_k$, where the $C_k$ are distinct irreducible somewhere-injective $J$-holomorphic curves and the $d_k$ are positive integers. 

Let $H_2(X,\Theta^+,\Theta^-)$ be the set of relative 2-chains $\Sigma$ in $X$ such that
$$\partial \Sigma=\sum_im^+_i\Theta^+_i-\sum_jm^-_j\Theta^-_j$$
modulo boundaries of 3-chains. It is an affine space over $H_2(X;\ZZ)$, and every curve $\cC$ defines a relative class $[\cC]\in H_2(X,\Theta^+,\Theta^-)$.

\bigskip
A \textit{broken $J$-holomorphic curve (of height $n$) from $\Theta^+$ to $\Theta^-$} is a finite sequence of holomorphic curves $\lbrace\cC_k\rbrace_{1\le k\le n}$ and orbit sets $\lbrace\Theta^\pm_k\rbrace_{1\le k\le n+1}$, such that there exists  an integer $1\le k_0\le n$ such that the following holds:

	$\bullet$ $\lbrace\Theta^+_k\rbrace_{k\ge k_0}$ belong to $(Y_+,\lambda_+)$ and $\lbrace\Theta^-_k\rbrace_{k\le k_0}$ belong to $(Y_-,\lambda_-)$,
	
	$\bullet$ $\Theta^-_1=\Theta^-$ and $\Theta^+_{n+1}=\Theta^+$ and $\Theta^-_k=\Theta^+_{k-1}$ for $k>1$,
	
	$\bullet$ $\cC_k\in\mM(\Theta^+_k,\Theta^-_k)$ with respect to $J|_{\RR\times Y_+}$ for $k> k_0$ (\textit{symplectization levels}),
	
	$\bullet$ $\cC_k\in\mM(\Theta^+_k,\Theta^-_k)$ with respect to $J|_{\RR\times Y_-}$ for $k<k_0$ (\textit{symplectization levels}),
	
	$\bullet$ $\cC_{k_0}\in\mM(\Theta^+_{k_0},\Theta^-_{k_0})$ with respect to $J$ (\textit{cobordism level}),
	 
	$\bullet$ $\cC_k$ is not a union of unbranched covers of $\RR$-invariant cylinders for $k\ne k_0$.
	
\noindent
The moduli space of such broken curves is denoted by $\overline{\mM(\Theta^+,\Theta^-)}$. There is an analogous definition of a \textit{broken $J$-holomorphic current}, with the further requirement that each current $\cC_k$ for $k\ne k_0$ is not a union of $\RR$-invariant cylinders with multiplicities.

There are relevant versions of Gromov compactness for the aforementioned moduli spaces. Any sequence of $J$-holomorphic curves $\lbrace C^\nu\rbrace_{\nu\ge1}\subset\mM(\Theta^+,\Theta^-)$ with fixed genus and uniform energy bound has a subsequence which converges in the sense of \textit{SFT compactness} \cite{SFTcompactness} to a broken $J$-holomorphic curve. Any sequence of $J$-holomorphic currents $\lbrace\cC^\nu\rbrace_{\nu\ge1}\subset\mM(\Theta^+,\Theta^-)$ with uniform energy bound has a subsequence which converges in an appropriate sense to a broken $J$-holomorphic current $(\cC_1,\ldots,\cC_n)\in\overline{\mM(\Theta^+,\Theta^-)}$, such that
$$\sum_{k=1}^n[\cC_k]=[\cC^\nu]\in H_2(X,\Theta^+,\Theta^-)$$
for $\nu$ sufficiently large (see \cite{Taubes:currents}*{Proposition 3.3} and \cite{Hutchings:lectures}*{Lemma 5.11}).

\subsection{ECH}
\label{subsec:ECH}

\indent\indent
Denote by $\tau$ a homotopy class of symplectic trivializations of the restrictions of $\xi_\pm=\ker\lambda_\pm$ to the embedded orbits appearing in the orbit sets $\Theta^\pm$. The \textit{ECH index} of a current, or more generally of a class in $H_2(X,\Theta^+,\Theta^-)$, is given by
$$I(\cC)=c_\tau(\cC)+Q_\tau(\cC)+CZ^I_\tau(\cC)$$
where $c_\tau(\cC)$ denotes the relative first Chern class of $\det T\overline X$ over $\cC$ with respect to $\tau$ (see \cite{Hutchings:lectures}*{\S3.2}), $Q_\tau(\cC)$ denotes a relative self-intersection pairing with respect to $\tau$ (see \cite{Hutchings:lectures}*{\S3.3}), and
$$CZ^I_\tau(\cC)=\sum_i\sum_{k=1}^{m^+_i}CZ_\tau(\Theta^{+k}_i)-\sum_j\sum_{k=1}^{m^-_j}CZ_\tau(\Theta^{-k}_j)$$
is a sum of Conley-Zehnder indices (of covers of orbits in $\Theta^\pm$) with respect to $\tau$. The definition of the Conley-Zehnder index will not be reviewed here (see \cite{Hutchings:lectures}*{\S3.2}), but it is noted that we can adjust $\tau$ over a given embedded orbit $\gamma$ so that
$$CZ_\tau(\gamma^m)=0$$
when $\gamma$ is positive hyperbolic,
$$CZ_\tau(\gamma^m)=m$$
when $\gamma$ is negative hyperbolic, and
$$CZ_\tau(\gamma^m)=2\lfloor m\theta\rfloor+1$$
when $\gamma$ is elliptic. Here, the linearization of the Reeb flow around an elliptic orbit is conjugate to a rotation by angle $2\pi\theta$ with respect to $\tau$, and $\theta\in\RR-\QQ$ is the \textit{rotation number}. The equivalence class of $\theta$ in $\RR/\ZZ$ is the \textit{rotation class} of the elliptic orbit, which does not depend on $\tau$.

\bigskip
The \textit{Fredholm index} of a curve $\cC$, having $i^\text{th}$ positive end asymptotic to $\alpha_i$ with multiplicity $m_i$ and $j^\text{th}$ negative end asymptotic to $\beta_j$ with multiplicity $n_j$, is given by
$$\ind(\cC)=-\chi(\cC)+2c_\tau(\cC)+CZ^{\ind}_\tau(\cC)$$
where
$$CZ^{\ind}_\tau(\cC)=\sum_iCZ_\tau(\alpha_i^{m_i})-\sum_jCZ_\tau(\beta_j^{n_j})$$
If $\cC$ has no multiply covered components then we have the \textit{index inequality}
$$\ind(\cC)\le I(\cC)-2\delta(\cC)$$
where $\delta(\cC)$ denotes an algebraic count of singularities of $\cC$ with positive integer weights (see \cite{Hutchings:lectures}*{\S3.4}). If $J$ is furthermore generic then $\mM(\Theta^+,\Theta^-)$ is an $\ind(\cC)$-dimensional manifold near $\cC$.

\bigskip
Denote by $\mM_I(\Theta^+,\Theta^-)$ the subset of elements in $\mM(\Theta^+,\Theta^-)$ that have ECH index $I$. In a symplectization $\overline X=\RR\times Y$ there is a characterization of currents with low ECH index. That is, if $J$ is generic and $\cC$ is a $J$-holomorphic current in the symplectization $\overline X=\RR\times Y$ then

\bigskip
	$\bullet$ $I(\cC)\ge0$, with equality if and only if $\cC$ is a union of $\RR$-invariant cylinders,
	
	$\bullet$ If $I(\cC)=1$ then $\cC=\cC_0\sqcup C_1$, where $C_1$ is an embedded $\ind=I=1$ curve and $I(\cC_0)=0$.

\bigskip\noindent See \cite{Hutchings:lectures}*{Proposition 3.7} for a proof. Given admissible orbit sets $\Theta^\pm$ of $(Y,\lambda)$, the coefficient $\langle\partial_\text{ECH}\Theta^+,\Theta^-\rangle$ is the signed\footnote{The moduli spaces are coherently oriented in the sense of \cite{HutchingsTaubes:gluing2}*{\S9}.} count of elements in $\mM_1(\Theta^+,\Theta^-)$. If $J$ is generic then $\partial_\text{ECH}$ is well-defined and $\partial^2_\text{ECH}=0$. The resulting homology is independent of the choice of $J$, depends only on $\xi$ and $\Gamma$, and is denoted by $ECH_\ast(Y,\xi,\Gamma)$.

\bigskip
If $Y$ is connected then Taubes constructed a canonical isomorphism of relatively graded modules
\begin{equation}
\label{eqn:Taubes}
ECH_\ast(Y,\xi,\Gamma)\cong\Hfrom^{-\ast}(Y,\s_\xi+\Gamma)
\end{equation}
where $\Hfrom^{-\ast}(\cdot)$ is a version of Seiberg-Witten Floer cohomology defined by Kronheimer and Mrowka \cite{KronheimerMrowka:SWF} and $\s_\xi$ is a certain spin-c structure determined by $\xi$. Moreover, both homologies admit absolute gradings by homotopy classes of oriented 2-plane fields on $Y$ and Taubes' isomorphism preserves these gradings (see \cite{Gardiner:gradings}).

\subsection{Gradings and U-maps}

\indent\indent
Assume $Y$ to be connected in this section. For each $\Gamma$ there is a canonical absolute $\ZZ/2\ZZ$ grading on $ECH_\ast(Y,\xi,\Gamma)$ by the parity of the number of positive hyperbolic Reeb orbits in an admissible orbit set $\Theta$. The total sum
$$ECH_\ast(Y,\xi):=\bigoplus_{\Gamma\in H_1(Y;\ZZ)}ECH_\ast(Y,\xi,\Gamma)$$
has an absolute grading by homotopy classes of oriented 2-plane fields on $Y$ (see \cite{Hutchings:revisited}*{\S 3}), the set of which is denoted by $J(Y)$. This grading of an admissible orbit set $\Theta$ is denoted by $|\Theta|$.

As described in \cite{Hutchings:revisited}*{\S 3}, \cite{KronheimerMrowka:SWF}*{\S 28}, and \cite{Gompf:handlebody}*{\S 4}, there is a well-defined map $J(Y)\to\Spinc(Y)$ with the following properties. If $H^2(Y;\ZZ)$ has no 2-torsion then the Euler class of the given 2-plane field uniquely determines the corresponding spin-c structure. There is a transitive $\ZZ$-action on $J(Y)$ whose orbits correspond to the spin-c structures: If $[\xi]\in J(Y)$ then $[\xi]+n$ is the homotopy class of a 2-plane field which agrees with $\xi$ outside a small ball $B^3\subset Y$ and disagrees with $\xi$ on $B^3$ by a map $(B^3,\partial B^3)\to(SO(3),\lbrace\1\rbrace)$ of degree $2n$.\footnote{This convention is opposite to that used in \cite{KronheimerMrowka:SWF}.} A given orbit $J(Y,\s)$ is freely acted on by $\ZZ$ if and only if the corresponding Euler class is torsion.

With that said, $ECH_\ast(Y,\xi,\Gamma)$ has a relative $\ZZ/d\ZZ$ grading, where $d$ denotes the divisibility of $c_1(\xi)+2\PD(\Gamma)$ in $H^2(Y;\ZZ)/\text{Torsion}$. It is refined by the absolute grading and satisfies
$$|\Theta^+|-|\Theta^-|\equiv I(\Sigma)\mod d$$
for any $\Sigma\in H_2(Y,\Theta^+,\Theta^-)$, thanks to the index ambiguity formula \cite{Hutchings:lectures}*{Equation 3.6}.

\bigskip
Similarly to the degree $-1$ ECH differential, there is a degree $-2$ chain map
$$U_y:ECH_\ast(Y,\xi,\Gamma)\to ECH_{\ast-2}(Y,\xi,\Gamma)$$
that counts ECH index 2 currents passing through an a priori chosen base point $(0,y)\in\RR\times Y$, where $y$ does not lie on any Reeb orbit (see \cite{Hutchings:lectures}*{\S3.8}). Such a current is of the form $\cC_0\sqcup C_2$, where $I(\cC_0)=0$ and $C_2$ is an embedded $\ind(C_1)=I(C_1)=2$ curve passing through $(0,y)$. On the level of homology this $U$-map does not depend on the choice of base point.

\subsection{L-flat approximations}
\label{L-flat approximations}

\indent\indent
The symplectic action induces a filtration on the ECH chain complex. For a positive real number $L$, the $L$-\textit{filtered ECH} is the homology of the subcomplex $ECC_\ast^L(Y,\lambda,J,\Gamma)$ spanned by admissible orbit sets of action less than $L$. The ordinary ECH is recovered by taking the direct limit over $L$, via maps induced by inclusions of the filtered chain complexes.

Let $u:C\to \overline X$ be an immersed connected $J$-holomorphic curve and denote by $N_C$ its normal bundle. The linearization of the $J$-holomorphic equation for $C$ defines its \textit{deformation operator}, a 1st order elliptic differential operator
$$D_C:L^2_1(N_C)\to L^2(T^{0,1}C\otimes N_C),\indent\eta\mapsto\dbar\eta+\nu_C\eta+\mu_C\bar\eta$$
where the appropriate sections $\nu_C\in\Gamma(T^{0,1}C)$ and $\mu_C\in\Gamma(T^{0,1}C\otimes N_C^2)$ are determined by the covariant derivatives of $J$ in directions normal to $C$, $\dbar$ is the d-bar operator arising from the Hermitian structure on $N_C$, and $\bar\eta$ denotes the conjugate of $\eta$ in $N^{-1}_C$. Let $N_C$ and $T^{0,1}C$ be suitably trivialized on an end of $C$ asymptotic to a Reeb orbit $\gamma$. Then $D_C$ is asymptotic (in the sense of \cite{Wendl:automatic}*{\S2}) to the \textit{asymptotic operator} associated with $\gamma$,
$$L_\gamma:L^2_1(\gamma^\ast\xi)\to L^2(\gamma^\ast\xi),\indent\eta\mapsto\frac{i}{2}\partial_t\eta+\nu\eta+\mu\bar\eta$$
and the pair $(\nu_C,\mu_C)$ is asymptotic to the pair $(\nu,\mu)$ over $\gamma$.

For a fixed $L>0$ it will be convenient to modify $\lambda$ and $J$ on small tubular neighborhoods of all Reeb orbits of action less than $L$, in order to relate $J$-holomorphic curves to Seiberg-Witten theory most easily. The desired modifications of $(\lambda,J)$ are called \textit{L-flat approximations}, and were introduced by Taubes in \cite{Taubes:ECH=SWF1}*{Appendix} and \cite{Taubes:ECH=SWF5}*{Proposition 2.5}. They induce isomorphisms on the $L$-filtered ECH chain complex, but for the point of this paper (see Lemma~\ref{lem:nbhd} and Section~\ref{Orientations and weights}) we really only need to know that:

	$\bullet$ The Reeb orbits of action less than $L$ (and their action) are not altered,
	
	$\bullet$ The $C^1$-norm of the difference between the contact forms can be made as small as desired,
	
	$\bullet$ The pair $(\nu,\mu)$ associated with an elliptic orbit with rotation number $\theta$ in a given trivialization\\
	\indent~~ is modified to $(\frac{1}{2}\theta,0)$, so that its asymptotic operator is complex linear.
	
\begin{remark}
The proof of Theorem~\ref{thm:SWGr} in \cite{Gerig:Gromov} will make crucial use of Taubes' isomorphism between ECH and Seiberg-Witten Floer cohomology, and that makes use of $L$-flat approximations. The key fact here is that Taubes' isomorphism actually exists on the $L$-filtered chain complex level, for which $L$-flat orbit sets are in bijection with Seiberg-Witten solutions of ``energy'' less than $2\pi L$.
\end{remark}

\section{Towards a near-symplectic Gromov invariant}

\indent\indent
This section spells out the proof of Theorem~\ref{main theorem}. We introduce the contact form on $S^1\times S^2$ that was studied in the past by Taubes, and then we find a different contact form on $S^1\times S^2$ whose contact dynamics is ``tame'' in a certain sense. This new contact form is better suited for establishing well-defined counts of $J$-holomorphic curves in $X-Z$. We also find a tubular neighborhood $\nN$ of $Z$ in $X$ for which the induced contact dynamics on its boundary $\partial\nN$ is given by our new contact form. Then we construct the relevant moduli spaces of $J$-holomorphic curves in $X-\nN$ which are to be counted, including their integer weights, and from there we define the relevant class in ECH. We retain the same assumptions that are made in Section~\ref{Main results}.

\subsection{Taubes' contact form}
\label{Taubes' contact form}

\indent\indent
The main focus of this paper is $S^1\times S^2$ equipped with \textit{Taubes' contact form}
\begin{equation}
\label{eqn:lambda}
\lambda_0=-(1-3\cos^2\theta)dt-\sqrt{6}\cos\theta\sin^2\theta d\varphi
\end{equation}
for coordinates $(t,\theta,\varphi)\in S^1\times S^2$ such that $0\le t\le 2\pi$ and $0\le\theta\le \pi$ and $0\le \varphi\le2\pi$. In order for $\lambda_0\wedge d\lambda_0$ to be positive, $S^1\times S^2$ is oriented by the 3-form
$$-\sin\theta\,dt\,d\theta\,d\varphi$$
The $S^1$-factor will be oriented by the 1-form $-dt$, and the $S^2$-factor will be oriented by the 2-form $\sin\theta\,d\theta\,d\varphi$. This contact manifold was originally studied in \cite{Taubes:HWZ,Honda:local,Perutz:thesis}, and we now describe some details that will be of use later on.

The Reeb field associated with $\lambda_0$ is
$$\frac{-1}{1+3\cos^4\theta}\left((1-3\cos^2\theta)\partial_t+\sqrt{6}\cos\theta\,\partial_\varphi\right)$$
The closed Reeb orbits live in the constant $\theta=\theta_0$ slices of $S^1\times S^2$ satisfying
\begin{equation}
\label{eqn:angleTaubes}
\theta_0\in\lbrace0,\pi\rbrace\indent\text{or}\indent\frac{\sqrt{6}\cos\theta_0}{1-3\cos^2\theta_0}\in\QQ\cup\lbrace\pm\infty\rbrace
\end{equation}
The two nondegenerate orbits at $\theta_0\in\lbrace0,\pi\rbrace$ are elliptic. They are denoted by $e_0,e_\pi$ and called \textit{the exceptional orbits}. The remaining orbits are degenerate, there being an $S^1$-family of orbits for each such $\theta_0$. In other words,
$$T(\theta_0):=\lbrace\text{constant $\theta=\theta_0$ slice}\rbrace\subset S^1\times S^2$$
is a torus foliated by orbits. This contact form is thus not nondegenerate, but it is ``Morse-Bott'' in the sense of \cite{Bourgeois:thesis}.

\begin{remark}
The contact structure $\xi_0=\ker\lambda_0$ is overtwisted, as pointed out in \cite{Taubes:HWZ}*{\S 2.f} and \cite{Honda:local}*{Proposition 9}. A well-known result of Hofer states that an overtwisted contact 3-manifold must have at least one contractible orbit, and indeed we see that $T(\arccos(\frac{1}{\sqrt 3}))$ and $T(\arccos(-\frac{1}{\sqrt 3}))$ consist of contractible orbits. The remaining orbits are all homologically nontrivial.
\end{remark}

With the contact structure $\xi_0$ oriented by $d\lambda_0$, we compute
$$c_1(\xi_0)=-2\in\ZZ\cong H^2(S^1\times S^2;\ZZ)$$
by using the section $\sin\theta\,\partial_\theta\in\Gamma(\xi_0)$ and noting that the orientation on $\xi_0$ disagrees with the orientation on the $S^2$-factor at $\theta=0$ and $\theta=\pi$. The spin-c structure $\s_\xi$ determined by $\xi_0$ satisfies
$$c_1(\s_\xi+1)=c_1(\s_\xi)+2=c_1(\xi_0)+2=0$$
and so Taubes' isomorphism~\eqref{eqn:Taubes} reads
$$ECH_j(S^1\times S^2,\xi_0,1)\cong \Hfrom^j(S^1\times S^2, \s_\xi+1)$$
where $j\in J(S^1\times S^2,\s_\xi+1)\cong\ZZ$ as $\ZZ$-sets. There is a unique class $j=[\xi_\ast]$ represented by an oriented 2-plane field $\xi_\ast$ on $S^1\times S^2$ which has vanishing Euler class and is invariant under rotations of the $S^1$-factor.

\begin{prop}
\label{prop:ECHSWF}
If $\Gamma\in H_1(S^1\times S^2;\ZZ)$ is not the oriented generator $1$ then
$$ECH_\ast(S^1\times S^2,\xi_0,\Gamma)=0$$
In the remaining case $\Gamma=1$, $ECH_j(S^1\times S^2,\xi_0,1)$ is zero in gradings below $j=[\xi_\ast]$, and for each $n\ge0$
$$ECH_{[\xi_\ast]+n}(S^1\times S^2,\xi_0,1)\cong\ZZ$$
The homotopy class $[\xi_\ast]$ has odd parity under the absolute $\ZZ/2\ZZ$ grading on ECH. After perturbing $\lambda_0$ to a nondegenerate contact form, $[\xi_\ast]$ is generated by a single positive hyperbolic orbit wrapping positively once around the $S^1$-factor in $T(\frac{\pi}{2})=S^1\times\lbrace\text{equator}\rbrace$.
\end{prop}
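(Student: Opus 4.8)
The plan is to compute $ECH_\ast(S^1\times S^2,\xi_0,\Gamma)$ by combining Taubes' isomorphism \eqref{eqn:Taubes} with a direct computation on the Seiberg--Witten Floer side, and then to pin down the lowest grading and the hyperbolic-orbit representative by a Morse--Bott analysis of the contact dynamics of $\lambda_0$. First I would treat the case $\Gamma\neq 1$: by \eqref{eqn:Taubes}, $ECH_\ast(S^1\times S^2,\xi_0,\Gamma)\cong\Hfrom^{-\ast}(S^1\times S^2,\s_\xi+\Gamma)$, and since $S^1\times S^2$ is a torsion spin-c manifold only for the spin-c structure with $c_1=0$ (which corresponds to $\Gamma=1$ here, as computed just before the statement from $c_1(\xi_0)=-2$), for all other $\Gamma$ the relevant spin-c structure has non-torsion $c_1$; a standard computation (e.g. using the fibered/product structure, or the Kronheimer--Mrowka calculation for $S^1\times S^2$) gives $\Hfrom^\ast=0$ in those cases. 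For $\Gamma=1$, the target is $\Hfrom^\ast(S^1\times S^2,\s_\xi+1)$ where $\s_\xi+1$ is the torsion (indeed $c_1=0$) spin-c structure, and here $\widehat{HM}$ is well known: it is isomorphic to $\mathbb{Z}[U^{-1}]$-like, i.e. one copy of $\mathbb{Z}$ in each grading $\ge$ the bottom one and zero below. Matching gradings via Taubes' isomorphism (which preserves the absolute grading by 2-plane fields, by \cite{Gardiner:gradings}) identifies the bottom grading with the homotopy class of the canonical $S^1$-invariant 2-plane field $\xi_\ast$ of vanishing Euler class, giving the stated $ECH_{[\xi_\ast]+n}\cong\mathbb{Z}$ for $n\ge 0$ and vanishing below.

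Next I would establish the parity claim. The absolute $\mathbb{Z}/2$ grading on ECH is by the number of positive hyperbolic orbits mod $2$, and under Taubes' isomorphism it matches the mod-$2$ grading on $\widehat{HM}$ coming from the mod-$2$ reduction of the absolute grading; alternatively, one can read off the parity of $[\xi_\ast]$ directly from the absolute grading by 2-plane fields and the $\mathbb{Z}/2$ refinement described in \cite{Hutchings:revisited}. Either way the generator in the bottom grading is odd, which already forces any admissible orbit set representing it to contain an odd number of positive hyperbolic orbits.

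The remaining and, I expect, most delicate step is the concrete identification: after a nondegenerate perturbation of the Morse--Bott form $\lambda_0$, the bottom generator is the single positive hyperbolic orbit that wraps once positively around the $S^1$-factor inside the equatorial torus $T(\pi/2)$. Here I would proceed as in the standard Morse--Bott-to-nondegenerate perturbation scheme (following \cite{Bourgeois:thesis}): each Morse--Bott torus $T(\theta_0)$ of Reeb orbits, after perturbation by a Morse function on the $S^1$ of orbits, contributes two nondegenerate orbits, and one computes their Conley--Zehnder indices from the linearized return map together with the Morse index shift. The torus $T(\pi/2)$ is distinguished: at $\theta_0=\pi/2$ one has $1-3\cos^2\theta_0=1>0$ and $\cos\theta_0=0$, so the Reeb field there is (a negative multiple of) $\partial_t$ and the orbits wrap once around the $S^1$-factor, representing the generator $1\in H_1$; a computation of the rotation of the linearized flow transverse to this torus shows the resulting nondegenerate orbits are hyperbolic rather than elliptic, and one of them is positive hyperbolic with the grading matching $[\xi_\ast]$. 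One then checks that every other candidate generator in that grading (coming from other tori, or from combinations involving the exceptional orbits $e_0,e_\pi$ and the contractible tori at $\theta_0=\arccos(\pm 1/\sqrt3)$) lies in a strictly higher grading or a different homology class, using the action/index bookkeeping and the fact that ECH in this grading is rank one. The main obstacle will be carrying out the Conley--Zehnder index computations for the perturbed Morse--Bott orbits carefully enough — in particular keeping track of trivializations $\tau$, the sign conventions fixing the orientation of $S^1\times S^2$ in \eqref{eqn:lambda}, and the degree-$2n$ convention for the $\mathbb{Z}$-action on $J(S^1\times S^2)$ — so that the bottom generator is pinned to exactly the asserted equatorial positive hyperbolic orbit and not, say, its negative-hyperbolic or differently-oriented sibling.
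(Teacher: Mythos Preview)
Your approach for the first two assertions---applying Taubes' isomorphism \eqref{eqn:Taubes} and then invoking the known computation of $\Hfrom^\ast(S^1\times S^2,\s)$---is exactly what the paper does; the paper simply cites \cite{KronheimerMrowka:SWF}*{\S IX.36} for the Seiberg--Witten side.

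For the parity statement and the identification of the bottom generator, you diverge from the paper. The paper does not carry out any Morse--Bott or Conley--Zehnder computation of its own here; it defers entirely to \cite{Hutchings:T3}*{\S12.2.1}, where the ECH of $T^3$ (and implicitly of $S^1\times S^2$ via the relevant open-book/Morse--Bott picture) is worked out, and notes that the passage from the twisted ECH computed there to the untwisted version used here goes through the spectral sequence of \cite{Hutchings:T3}*{\S8.1}. Your proposed route---perturb $\lambda_0$ \`a la Bourgeois, compute indices, and locate the minimal-grading admissible orbit set by hand---is in principle viable and is essentially what underlies the cited reference, so it buys self-containedness at the cost of reproducing a nontrivial chunk of \cite{Hutchings:T3}.

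One concrete slip to flag: you write that the perturbation of $T(\pi/2)$ produces two \emph{hyperbolic} orbits. In fact (see Lemma~\ref{lem:Bourgeois}, which applies since $\lambda_0$ satisfies the technical condition~\eqref{eqn:technical}), a Morse--Bott torus of this type breaks into one positive hyperbolic orbit and one $L$-positive \emph{elliptic} orbit. This does not derail your argument---the positive hyperbolic orbit is still there and is the generator you want---but your subsequent task of ruling out the elliptic sibling and all other low-action orbit sets in the class $\Gamma=1$ is where the real work lies, and your sketch is thin on how that bookkeeping (actions, trivializations, and the grading comparison to $[\xi_\ast]$) would be executed without simply reproducing \cite{Hutchings:T3}.
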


\begin{proof}
The first two statements are proved in \cite{KronheimerMrowka:SWF}*{\S IX.36} for the relevant version of Seiberg-Witten Floer homology, so they follow for ECH via Taubes' isomorphism~\eqref{eqn:Taubes}. The latter two statements follow from the relevant statements in \cite{Hutchings:T3}*{\S 12.2.1}. This reference uses a ``twisted'' version of ECH that remembers some information about the relative homology classes of the $J$-holomorphic curves in $S^1\times S^2$, but the untwisted version may be obtained via a spectral sequence in \cite{Hutchings:T3}*{\S 8.1}.
\end{proof}

In the upcoming section, $\lambda_0$ will be modified in various ways. We now preemptively analyze the contact form $e^f\lambda_0$ for a given smooth function $f:S^1\times S^2\to\RR$ depending only on the $\theta$ coordinate, whose contact structure is nonetheless $\xi_0$. Such a contact form can be written as
\begin{equation}
\label{eqn:form}
\lambda=a_1(\theta)dt + a_2(\theta)d\varphi
\end{equation}
for some smooth pair
$$a=(a_1,a_2):[0,\pi]\to\RR^2-\lbrace(0,0)\rbrace$$
Let $a\times a':=a_1a_2'-a_2a_1'$, where the tick-mark signifies the derivative with respect to $\theta$. The condition for $\lambda$ to be a positive contact form is then
$$\frac{a\times a'(\theta)}{\sin\theta}<0$$
for all $\theta\in[0,\pi]$. For $\theta\in(0,\pi)$ the Reeb field of $\lambda$ is
$$R=\frac{1}{a\times a'(\theta)}\left(a_2'(\theta)\partial_t-a_1'(\theta)\partial_\varphi\right)$$
and the condition~\eqref{eqn:angleTaubes}, for which $T(\theta_0)\subset S^1\times S^2$ is a torus foliated by orbits, is now given by
\begin{equation}
\label{eqn:angle}
\frac{a_1'(\theta_0)}{a_2'(\theta_0)}\in\QQ\cup\lbrace\pm\infty\rbrace
\end{equation}
Every embedded orbit in $T(\theta_0)$ represents the same class in $H_1(T(\theta_0);\ZZ)$ and they all have the same action $\aA(\theta_0)>0$. There are also two exceptional nondegenerate elliptic orbits at $\theta_0\in\lbrace0,\pi\rbrace$.

\begin{lemma}
\label{lem:rotation}
The exceptional elliptic orbit at $\theta=\theta_0$, for $\theta_0\in\lbrace0,\pi\rbrace$, has rotation class
$$\left(\operatorname{sign}\lim_{\theta\to \theta_0}\frac{-a_2'(\theta)}{\sin\theta\,\cos\theta}\right)\lim_{\theta\to \theta_0}\frac{a_1'(\theta_0)}{a_2'(\theta_0)}\mod1$$
In particular, the rotation class for either exceptional orbit of $\lambda_0$ is $\sqrt{\frac{3}{2}}\mod1$.
\end{lemma}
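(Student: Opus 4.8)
The plan is to reduce the statement to an explicit computation in a neighborhood of the embedded orbit $\gamma:=S^1\times\{\theta=\theta_0\}$ sitting over the pole $\theta_0\in\{0,\pi\}$ of the $S^2$-factor. First I would record the constraints that smoothness of $\lambda=e^{f(\theta)}\lambda_0$ on $S^1\times S^2$ places on the pair $(a_1,a_2)$ near $\theta_0$: since $a_1(\theta)\,dt$ and $a_2(\theta)\,d\varphi$ must each extend smoothly over the pole, $a_1$ is a smooth even function of the radial coordinate there while $a_2$ vanishes to exactly second order, so $a_1'(\theta_0)=a_2'(\theta_0)=a_2(\theta_0)=0$, $a_2''(\theta_0)\ne0$, and $a_1(\theta_0)=2e^{f(\theta_0)}>0$. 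In particular the ratio $a_1'(\theta_0)/a_2'(\theta_0)$ appearing in the statement is to be read as the limit $\lim_{\theta\to\theta_0}a_1'(\theta)/a_2'(\theta)=a_1''(\theta_0)/a_2''(\theta_0)$.

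Next I would introduce Cartesian-type coordinates $(u,v)=(r\cos\varphi,r\sin\varphi)$ on the $S^2$-factor near the pole (with $r=\theta$ near $\theta=0$ and $r=\pi-\theta$ near $\theta=\pi$), so that $(t,u,v)$ are coordinates near $\gamma=\{u=v=0\}$ and $\partial_\varphi=-v\,\partial_u+u\,\partial_v$. Taylor-expanding $a_1,a_2$ to second order at $\theta_0$ and using $a_2(\theta)\,d\varphi=\tfrac12 a_2''(\theta_0)(u\,dv-v\,du)+O(|(u,v)|^3)$, I would read off three things: (i) along $\gamma$ one has $\lambda=a_1(\theta_0)\,dt$, hence $\xi_0|_\gamma=\operatorname{span}(\partial_u,\partial_v)$; (ii) $d\lambda|_\gamma=a_2''(\theta_0)\,du\wedge dv$, so the orientation of $\xi_0$ by $d\lambda$ makes $(\partial_u,\partial_v)$ a positively-oriented frame precisely when $a_2''(\theta_0)>0$; (iii) from the Reeb-field formula $R=(a\times a')^{-1}(a_2'\,\partial_t-a_1'\,\partial_\varphi)$, together with the leading-order expansions $(a\times a')(\theta)=a_1(\theta_0)\,a_2'(\theta)+O(|\theta-\theta_0|^3)$ and $a_2'(\theta)=a_2''(\theta_0)\sin\theta\cos\theta+o(|\theta-\theta_0|)$, the Reeb field near $\gamma$ has $t$-component converging to $1/a_1(\theta_0)$ and transverse linearization $\dot u=\mu v$, $\dot v=-\mu u$ along $\gamma$, where $\mu:=\lim_{\theta\to\theta_0}a_1'(\theta)/(a\times a')(\theta)=\tfrac{1}{a_1(\theta_0)}\,a_1'(\theta_0)/a_2'(\theta_0)$ (all coefficients of $R$ are $t$-independent, so the normal dynamics genuinely decouples).

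Then I would assemble the rotation number. The period $T$ of $\gamma$ equals its action $\int_\gamma\lambda=2\pi|a_1(\theta_0)|=2\pi a_1(\theta_0)$, over which $t$ increases by $2\pi$. Restricting the time-$T$ linearized Reeb flow to $\xi_0|_\gamma$ gives, in the $(u,v)$-frame, a rotation by $-\mu T$; writing this in a positively-oriented trivialization of $\xi_0|_\gamma$ (which is the $(u,v)$-frame if $a_2''(\theta_0)>0$ and its orientation-reverse if $a_2''(\theta_0)<0$) turns it into a rotation by $-\operatorname{sign}(a_2''(\theta_0))\,\mu T$, so the rotation number in that trivialization is
$$-\operatorname{sign}\bigl(a_2''(\theta_0)\bigr)\,\mu\, a_1(\theta_0)=-\operatorname{sign}\bigl(a_2''(\theta_0)\bigr)\,\frac{a_1'(\theta_0)}{a_2'(\theta_0)}.$$
Since $\lim_{\theta\to\theta_0}a_2'(\theta)/(\sin\theta\cos\theta)=a_2''(\theta_0)$, the prefactor equals $\operatorname{sign}\bigl(\lim_{\theta\to\theta_0}-a_2'(\theta)/(\sin\theta\cos\theta)\bigr)$, which is exactly the claimed formula; its class mod $1$ is the rotation class, independent of the trivialization. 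Specializing to $\lambda=\lambda_0$, where $a_1=-(1-3\cos^2\theta)$ and $a_2=-\sqrt6\cos\theta\sin^2\theta$, one has $a_1''(0)=a_1''(\pi)=-6$, $a_2''(0)=-2\sqrt6$ and $a_2''(\pi)=+2\sqrt6$, so the formula yields $(+1)\cdot\tfrac{-6}{-2\sqrt6}=\sqrt{3/2}$ at $\theta_0=0$ and $(-1)\cdot\tfrac{-6}{2\sqrt6}=\sqrt{3/2}$ at $\theta_0=\pi$.

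The main obstacle is purely the sign bookkeeping, which combines three orientation inputs: the orientation of $\xi_0$ near the pole (determined by $\operatorname{sign}a_2''(\theta_0)$), the direction in which the Reeb flow wraps the $t$-circle (determined by $\operatorname{sign}a_1(\theta_0)$, which is $+$ for every form $e^f\lambda_0$), and the change of sign of $\sin\theta\cos\theta$ between $\theta_0=0$ and $\theta_0=\pi$ -- it is exactly this last point that is responsible for the sign prefactor in the statement, and the only place where care is needed is to check that $e_0$ and $e_\pi$ nonetheless come out equal. Everything else is second-order Taylor expansion near the pole.
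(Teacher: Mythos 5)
Your proof is correct and follows essentially the same route as the paper's: compute the angular winding of the Reeb flow near the pole and fix the sign by comparing with the orientation induced by $d\lambda$ on $\xi_0|_\gamma$ in Cartesian coordinates. You make explicit two points the paper leaves implicit---that $a_1'(\theta_0)=a_2'(\theta_0)=0$ so the quotient in the statement is to be read as the limit $a_1''(\theta_0)/a_2''(\theta_0)$, and that the winding comes from an explicit linearization of the Reeb field over the period $2\pi a_1(\theta_0)$---but the underlying mechanism and sign bookkeeping are the same.
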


\begin{proof}
The Reeb field along each exceptional orbit is
$$R=\frac{1}{a_1(\theta_0)}\partial_t$$
Note that $a_1(\theta_0)>0$, because $\lambda_0$ satisfies it and $e^f$ is a positive rescaling. Thus, in a neighborhood of $\theta=\theta_0$ the Reeb flow is in the positive $t$-direction and wraps $-\lim_{\theta\to \theta_0}a_1'(\theta_0)/a_2'(\theta_0)$ times around the $\varphi$-coordinate circle after traversing once around the $t$-coordinate circle. The rotation class of each elliptic orbit is thus
\begin{equation}
\label{eqn:rotation}
\epsilon\cdot\lim_{\theta\to \theta_0}\frac{-a_1'(\theta_0)}{a_2'(\theta_0)}\mod1
\end{equation}
where $\epsilon=\pm1$ depending on whether the $\varphi$-coordinate circle is positively or negatively oriented with respect to the orientation of the contact planes $T_{\theta_0}S^2$ given by $d\lambda$. To determine $\epsilon$, we use the Cartesian coordinates
$$\begin{cases}
(x,y)=(\sin\theta\,\cos\varphi,\sin\theta\,\sin\varphi)\\
dx\,dy=\sin\theta\,\cos\theta\,d\theta\,d\varphi
\end{cases}$$
near each pole of $S^2$. In these coordinates,
$$d\lambda|_{T_{\theta_0}S^2}=\left(\lim_{\theta\to \theta_0}\frac{a_2'(\theta)}{\sin\theta\,\cos\theta}\right)dx\,dy$$
and $\epsilon$ is precisely the sign of this paranthetical expression.
\end{proof}

\subsection{Changing Taubes' contact form}
\label{Changing Taubes' contact form}

\indent\indent
In order to obtain well-defined counts of $J$-holomorphic curves which represent a given class $A\in\Rel(X)$, we will need to ensure a bound on their energy as well as a bound on the symplectic action of their orbit sets. As explained in \cite{Hutchings:fieldtheory}, these bounds are given by a particular quantity $\rho(A)$, defined as follows:

Let $u:\Sigma\to X_0$ be any given smooth map which represents $A$, where $\Sigma$ is a compact oriented smooth surface with boundary and $u(\partial \Sigma)\subset \partial X_0$. Then
\begin{equation}
\rho(A):=\int_\Sigma\omega+\int_{\partial\Sigma}e^{-1}\lambda_0
\end{equation}
This quantity is additive with respect to composition of symplectic cobordisms, and it vanishes on exact symplectic cobordisms (recall that a symplectic cobordism is \textit{exact} if the contact form on the boundary extends as a global primitive 1-form of the symplectic form). Subsequently, $\rho(A)$ does not change if the symplectic cobordism $(X_0,\omega)$ is modified by composing it with an exact symplectic cobordism.

\begin{remark}
We can view $\rho:\Rel(X)\to\RR$ as either a measure of the failure of exactness of $\omega$, or as an energy. In particular, if $(X,\omega)$ were a closed symplectic manifold, i.e. $\partial X_0=\varnothing$ and $A\in H_2(X;\ZZ)$, then $\rho(A)=A\cdot[\omega]$.
\end{remark}

Now, three modifications will be made to $\lambda_0$. First, we will want all Reeb orbits to be nondegenerate in order to define ECH. Second, we will want all Reeb orbits of action less than $\rho(A)$ to be $\rho(A)$-flat in order to relate the $J$-holomorphic curves to Seiberg-Witten theory. Third, we will want the elliptic orbits of action less than $\rho(A)$, especially the exceptional orbits, to be ``$\rho(A)$-positive'' in order to guarantee transversality of the relevant moduli spaces of $J$-holomorphic curves (see Remark~\ref{I<0} below). As defined in \cite{Hutchings:beyond}, the quantifier ``$\rho(A)$-positive'' means the following:

\begin{definition}
Fix $L>0$. Let $\gamma$ be a nondegenerate embedded elliptic orbit with rotation class $\theta\in\RR/\ZZ$ and symplectic action $\aA(\gamma)<L$. Then $\gamma$ is \textit{L-positive} if $\theta\in(0,\aA(\gamma)/L)\mod 1$.
\end{definition}

A key property of any $L$-positive elliptic orbit $\gamma$ is that if $L$ is much greater than $\aA(\gamma)$, then $CZ_\tau(\gamma^m)=1$ for all $m<L/\aA(\gamma)$ and a particular choice of trivialization $\tau$ of $\gamma^\ast\xi_0$.

\bigskip
The next lemma below shows how to modify the Morse-Bott orbits, in the sense of \cite{Bourgeois:thesis} and adapted from \cite{Hutchings:beyond}*{Lemma 5.4}. For a given positive contact form written as~\eqref{eqn:form}, the lemma requires the following technical condition
\begin{equation}
\label{eqn:technical}
a'\times a''(\theta_0)<0
\end{equation}
for all $\theta_0\in(0,\pi)$ that satisfy~\eqref{eqn:angle}. Note that $\lambda_0$ satisfies the technical condition.

\begin{lemma}
\label{lem:Bourgeois}
Suppose the positive contact form $\lambda=a_1(\theta)dt + a_2(\theta)d\varphi$ satisfies the technical condition~\eqref{eqn:technical}. Then for every $L>0$ and sufficiently small $\delta>0$, there exists a perturbation $e^{f_{\delta,L}}\lambda$ of $\lambda$ satisfying the following properties:

	$\bullet$ $f_{\delta,L}\in C^\infty(S^1\times S^2)$ satisfies $||f_{\delta,L}||_{C^0}<\delta$,

	$\bullet$ $e^{f_{\delta,L}}\lambda$ agrees with $\lambda$ near the exceptional orbits at $\theta_0\in\lbrace0,\pi\rbrace$,

	$\bullet$ Each family of orbits in the torus $T(\theta_0)$ with $\aA(\theta_0)<L$ is replaced by a positive hyperbolic\\
	\indent~~~orbit and an $L$-positive elliptic orbit, both of action less than $L$ and within $\delta$ of $\aA(\theta_0)$,

	$\bullet$ $e^{f_{\delta,L}}\lambda$ has no other embedded orbits of action less than $L$.
\end{lemma}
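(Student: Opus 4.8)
The plan is to obtain $e^{f_{\delta,L}}\lambda$ as a Morse--Bott perturbation in the sense of Bourgeois \cite{Bourgeois:thesis}, carried out independently near each of the finitely many tori of action below $L$, and to read off the orbit types from the technical condition \eqref{eqn:technical}. First I would record the structural facts about the unperturbed $\lambda$ that make the problem local. Since the Reeb field of $\lambda=a_1(\theta)dt+a_2(\theta)d\varphi$ has no $\partial_\theta$-component, every closed Reeb orbit lies in a single slice $T(\theta)$; such a slice carries closed orbits precisely when \eqref{eqn:angle} holds, in which case every embedded orbit there has the common action $\aA(\theta)>0$, and this action tends to $+\infty$ as the slope $a_1'(\theta)/a_2'(\theta)$ runs through rationals of growing height. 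Consequently only finitely many tori $T(\theta_1),\dots,T(\theta_m)$ satisfy $\aA(\theta_j)<L$, and off of these and the two exceptional elliptic orbits at $\theta\in\{0,\pi\}$ the form $\lambda$ has no embedded orbit of action $<L$. I would fix pairwise disjoint tubular neighborhoods $U_j\supset T(\theta_j)$, each disjoint from $\{\theta=0\}\cup\{\theta=\pi\}$ and narrow enough that the slices just outside $U_j$ still support no closed orbit of action $<L$.

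Near each $T(\theta_j)$ the space of Reeb orbits is a circle $S_j$, and I would choose a Morse function $g_j\colon S_j\to\RR$ with a single maximum and a single minimum, pull it back to a function on $T(\theta_j)$ constant along the orbits, and extend it to $U_j$ by a cutoff; then set $f_{\delta,L}:=\e\sum_j g_j$ for a small parameter $\e>0$ fixed at the end. Since the $g_j$ are fixed, $f_{\delta,L}$ is $C^\infty$-small, so Bourgeois's perturbation theorem \cite{Bourgeois:thesis} (in the form used in \cite{Hutchings:beyond}*{Lemma 5.4}) applies: for $\e$ small the only closed orbits of $e^{f_{\delta,L}}\lambda$ that meet $U_j$ and have action $<L$ are two nondegenerate embedded orbits, one over each critical point of $g_j$, each with action within $O(\e)$ of $\aA(\theta_j)$. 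An orbit disjoint from $\bigcup_jU_j$ lies where $e^{f_{\delta,L}}\lambda=\lambda$, hence is an honest $\lambda$-orbit and, by the previous paragraph, is one of the two exceptional orbits or has action $\ge L$. Taking $\e$ small enough that in addition $\|f_{\delta,L}\|_{C^0}<\delta$ and the $O(\e)$ action shifts keep the new orbits within $\delta$ of $\aA(\theta_j)$ and below $L$, all the stated properties follow except the identification of the \emph{types} of the two new orbits; and $e^{f_{\delta,L}}\lambda=\lambda$ near $\theta\in\{0,\pi\}$ by construction.

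To identify the types I would compute the linearized return map of an orbit $\gamma\subset T(\theta_j)$ in a frame of $\gamma^\ast\xi_0$ spanned by the direction tangent to $T(\theta_j)$ and the normal ($\theta$) direction. The tangent direction is fixed (all orbits in $T(\theta_j)$ share the period), while the normal direction is sheared by an amount governed by the $\theta$-derivative of the slope, so the return map is $\left(\begin{smallmatrix}1&c_j\\0&1\end{smallmatrix}\right)$ with $c_j\ne0$ and $\operatorname{sign}(c_j)$ determined by $a'\times a''(\theta_j)<0$. After perturbation the return map over a critical point of $g_j$ picks up a factor $\left(\begin{smallmatrix}1&0\\-\e g_j''&1\end{smallmatrix}\right)$, hence has trace $2-\e c_jg_j''+O(\e^2)$: where $c_jg_j''<0$ the eigenvalues are real and near $+1$ (a \emph{positive} hyperbolic orbit), and where $c_jg_j''>0$ they lie on the unit circle, giving an elliptic orbit of rotation number $O(\sqrt\e)$. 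By \eqref{eqn:technical} the maximum and the minimum of $g_j$ realize exactly these two cases, one each; and orienting $S_j$ --- equivalently, choosing between the two natural forms of the Morse function --- so that this rotation number is \emph{positive} in $\RR/\ZZ$ makes it lie in $(0,\aA/L)$ for $\e$ small, i.e. makes the elliptic orbit $L$-positive.

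The main obstacle is precisely this last sign analysis: extracting the two return-map directions, computing $\operatorname{sign}(c_j)$ from the geometry encoded in \eqref{eqn:technical}, and then matching orientation conventions so that the hyperbolic orbit comes out \emph{positive} and the small elliptic rotation number comes out \emph{positive} rather than negative --- the point where \cite{Hutchings:beyond}*{Lemma 5.4} is doing real work. By contrast, the finiteness of the resonant tori and the absence of new short orbits are routine once Bourgeois's theorem is invoked with the quantifier ``$\e$ small depending on $L$'', and the compatibility with the exceptional orbits is handled for free by localizing the support of $f_{\delta,L}$ away from the poles.
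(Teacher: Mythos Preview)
Your approach is essentially the paper's: apply Bourgeois's Morse--Bott perturbation near each short torus, use an Arzel\`a--Ascoli/localization argument to rule out extra short orbits, and read off the orbit types from the linearized return map. The paper carries out the return-map step by computing the Lie derivatives $\lL_{\partial_\theta}R$ and $\lL_{a^\perp}R$ directly in the frame $\langle\partial_\theta,a^\perp\rangle$ with $a^\perp=(a_2,-a_1)$, obtaining the shear $\1+\left(\begin{smallmatrix}0&0\\ r(\theta_0)\aA(\theta_0)&0\end{smallmatrix}\right)$ with $r=-\dfrac{a'\times a''}{(a\times a')^2}$, and then observes that the perturbed elliptic orbit inherits the sign of $r$ as its rotation number.

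There is one real slip in your sign analysis. You write that ``orienting $S_j$ --- equivalently, choosing between the two natural forms of the Morse function --- so that this rotation number is positive'' makes the elliptic orbit $L$-positive. But the rotation class in $\RR/\ZZ$ is an intrinsic invariant of the elliptic orbit; it does not depend on how you orient the circle of orbits or on replacing $g_j$ by $-g_j$. Swapping $g_j\leftrightarrow -g_j$ moves the elliptic orbit from one critical point to the other, but a direct check of your perturbed matrix shows that in either case the image of the tangent vector has the \emph{same} sign in the normal component, namely the sign of $-g_j''$ at the relevant critical point, which is negative in both cases. So the rotation is positive in the $d\lambda$-orientation precisely because $c_j>0$, i.e.\ precisely because of~\eqref{eqn:technical}. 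In other words, the technical condition is not merely ensuring ``one hyperbolic, one elliptic'' (that dichotomy holds regardless); it is exactly what forces the small rotation number to be \emph{positive} rather than negative. Without~\eqref{eqn:technical} you would get an $L$-negative elliptic orbit and no choice of Morse function would repair it. Once you replace the ``choose the orientation'' sentence with ``$\operatorname{sign}(c_j)>0$ by~\eqref{eqn:technical}, hence the rotation number is positive,'' your argument is complete and matches the paper's.
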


\begin{proof}
The function $f_{\delta,L}$ is given by Bourgeois' perturbation \cite{Bourgeois:thesis} of $\lambda$, which breaks up each $T(\theta_0)$ into two embedded nondegenerate orbits of action slightly less than $\aA(\theta_0)$ in addition to orbits of action greater than $L$. Namely, there is a positive hyperbolic orbit and an elliptic orbit $e_{\theta_0}$, both representing the same class in $H_1(T(\theta_0);\ZZ)$. For sufficiently small perturbations there cannot exist other orbits of action less than $L$, otherwise we would find a sequence $\lbrace(\gamma_k,\delta_k)\rbrace_{k\in\NN}$ of such orbits of uniformly bounded action $L$ and perturbations $\delta_k\to0$ for which a subsequence converges to one of the original degenerate orbits (by the Arzel\`a-Ascoli theorem), yielding a contradiction.

It remains to compute the rotation class of the elliptic orbit created from each Morse-Bott family. Let $a^\perp:=a_2(\theta)\partial_t-a_1(\theta)\partial_\varphi$. The basis $\langle\partial_\theta,a^\perp\rangle$ defines a trivialization $\tau$ of the contact structure $\xi$ over $S^1\times\left(S^2-\lbrace\text{poles}\rbrace\right)$, since
$$d\lambda(\partial_\theta,a^\perp)=-a\times a'(\theta)>0$$
for $0<\theta<\pi$.
We then compute the Lie derivatives of the Reeb field $R$,
$$\lL_{\partial_\theta}R=-\frac{a'\times a''}{(a\times a')^2}a^\perp,\indent \lL_{a^\perp}R=0$$
to see that the linearized Reeb flow along $T(\theta_0)$ induces the linearized return map
$$P_{T(\theta_0)}:=\1+\begin{pmatrix}
0&0\\ r(\theta_0)\aA(\theta_0)&0
\end{pmatrix}$$
on $\xi$ in the chosen basis, where
\begin{equation}
\label{eqn:return}
r:=-\frac{a'\times a''}{(a\times a')^2}
\end{equation}
The linearized return map along $e_{\theta_0}$, denoted $P_{\theta_0}$, is a perturbation of $P_{T(\theta_0)}$. To clarify the dependence of this perturbation on $\delta$, we first note that Bourgeois' factor $e^{f_{\delta,L}}$ takes the form $(1+\delta \tilde f)$ for some function $\tilde f$ that is compactly supported in neighborhoods of the tori $T(\theta_0)$ and does not depend on $\delta$. We then claim that the perturbed Reeb field takes the form $R+\oO(\delta)R_1$, as in \cite{Bourgeois:thesis}*{Lemma 2.3}, where $R_1$ is a vector field that may be expressed in terms of $\lbrace\tilde f, \partial\tilde f, a, a'\rbrace$. Assume this claim for the moment; it will be demonstrated at the very end. So the perturbed linearized return map along $e_{\theta_0}$ with respect to $\tau$ is
\begin{equation}
\label{eqn:perturb1}
P_{\theta_0}=P_{T(\theta_0)}+\oO(\delta)\begin{pmatrix}
P_{11}&P_{12}\\ P_{21}&P_{22}
\end{pmatrix}
\end{equation}
where each entry $P_{ij}$ may be expressed in terms of $\lbrace\tilde f, \partial\tilde f, \partial^2\tilde f, a, a',a''\rbrace$ along $e_{\theta_0}$. We use this bound on the entries of the matrix $P_{\theta_0}-P_{T(\theta_0)}$ to analyze the rotation number now.

Since $P_{\theta_0}$ is a symplectic matrix which is conjugate in $SL_2(\RR)$ to a rotation matrix, we write
\begin{equation}
\label{eqn:perturb2}
P_{\theta_0}=\begin{pmatrix}
u_{11}&u_{12}\\ u_{21}&u_{22}
\end{pmatrix}\begin{pmatrix}
\cos\phi&-\sin\phi\\ \sin\phi&\cos\phi
\end{pmatrix}\begin{pmatrix}
u_{11}&u_{12}\\ u_{21}&u_{22}
\end{pmatrix}^{-1}
\end{equation}
where $u_{11}u_{22}-u_{12}u_{21}=1$ and the rotation number is denoted by $\phi$. Thus,
$$|2\cos\phi-2|=|\tr(P_{\theta_0}-P_{T(\theta_0)})|=\oO(\delta)$$
and so $\phi$ can be made arbitrarily small by choosing $\delta$ sufficiently small. Moreover, the bottom-left entry of $P_{\theta_0}$ in~\eqref{eqn:perturb1} and~\eqref{eqn:perturb2} is
$$(u_{21}^2+u_{22}^2)\sin\phi=r(\theta_0)\aA(\theta_0)+\oO(\delta)$$
and so the sign of $\phi$ is the same as the sign of $r(\theta_0)$ for $\delta$ sufficiently small. This precisely means the rotation number of $e_{\theta_0}$ has the opposite sign of $a'\times a''(\theta_0)$, using the description~\eqref{eqn:return}, so it follows from the technical condition~\eqref{eqn:technical} that each $e_{\theta_0}$ is $L$-positive.

The remaining loose thread is the claimed description of the perturbed Reeb field. In the basis $\langle R,\partial_\theta,a^\perp\rangle$ of the tangent bundle of $S^1\times\left(S^2-\lbrace\text{poles}\rbrace\right)$ consider the ansatz
$$R_\text{perturb}=R+(x_0\cdot R+x_1\cdot\partial_\theta+x_2\cdot a^\perp)$$
The normalization condition $(1+\delta\tilde f)\lambda(R_\text{perturb})=1$ forces
$$x_0=-\frac{\delta}{1+\delta \tilde f}\tilde f$$
because $\lambda(R)=1$ and $\lambda(\partial_\theta)=0$ and $\lambda(a^\perp)=0$. The other defining equation
$$0=d\left((1+\delta\tilde f)\lambda\right)(R_\text{perturb},\cdot)=(1+\delta\tilde f)d\lambda(R_\text{perturb},\cdot)+\delta(d\tilde f\wedge\lambda)(R_\text{perturb},\cdot)$$
splits into $dt$, $d\varphi$, $d\theta$ component equations. Noting that $d\lambda(R,\cdot)=0$, the $d\theta$ component equation forces
$$x_2=\frac{\delta}{(1+\delta\tilde f)^2(a\times a')}\frac{\partial\tilde f}{\partial\theta}$$
and subsequently the $dt$ (or $d\varphi$) component equation forces
$$x_1=\frac{\delta}{(1+\delta\tilde f)^2(a\times a')}(a_1\frac{\partial\tilde f}{\partial\varphi}-a_2\frac{\partial\tilde f}{\partial t})$$
Thus, $R_\text{perturb}$ is as claimed.
\end{proof}

\bigskip
Now we show how to modify the exceptional orbits while preserving the technical condition~\eqref{eqn:technical}.

\begin{lemma}
\label{lem:f}
Given $c\ge0$ and $\e\in(0,\sqrt{\frac{3}{2}}]$, there exists a smooth nonpositive function $f_{\e,c}$ on $S^1\times S^2$ which only depends on the $\theta$ coordinate, such that $e^{f_{\e,c}}\lambda_0$ satisfies the technical condition~\eqref{eqn:technical} and whose exceptional orbits $e_0$ and $e_\pi$ have rotation classes both equal to $\e\mod 1$.
\end{lemma}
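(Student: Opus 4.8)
The strategy is to build $f_{\e,c}$ explicitly as a function of $\theta$ supported near the poles $\theta_0\in\{0,\pi\}$, and to read off the required rotation class from Lemma~\ref{lem:rotation}. Writing $e^{f_{\e,c}}\lambda_0 = a_1(\theta)\,dt + a_2(\theta)\,d\varphi$ with $(a_1,a_2) = e^{f_{\e,c}(\theta)}\,(-(1-3\cos^2\theta),\,-\sqrt{6}\cos\theta\sin^2\theta)$, Lemma~\ref{lem:rotation} tells us the rotation class at $\theta=\theta_0\in\{0,\pi\}$ equals
\[
\left(\operatorname{sign}\lim_{\theta\to\theta_0}\frac{-a_2'(\theta)}{\sin\theta\cos\theta}\right)\frac{a_1'(\theta_0)}{a_2'(\theta_0)}\bmod 1 .
\]
So the whole problem reduces to controlling the one-sided derivatives $a_1'(\theta_0)$, $a_2'(\theta_0)$ and the sign of $a_2'/(\sin\theta\cos\theta)$ near each pole, subject to keeping the technical condition $a'\times a''(\theta_0)<0$ at the interior Morse-Bott angles. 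First I would compute these quantities for $\lambda_0$ itself (where $f=0$): one finds $a_1'(0)=0$, so the naive ratio is degenerate, and indeed the honest computation (as in Lemma~\ref{lem:rotation}'s last sentence) gives rotation class $\sqrt{3/2}\bmod 1$ for both exceptional orbits — this is why the hypothesis $\e\le\sqrt{3/2}$ appears, and it is the target we must deform away from.

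The key observation is that multiplying $\lambda_0$ by $e^{f(\theta)}$ changes the linearized return map at an exceptional orbit in a controlled way: near $\theta_0$ the Reeb field is $\frac{1}{a_1(\theta_0)}\partial_t$, and one computes that the rotation number scales essentially linearly in the first nonzero Taylor coefficient of the rescaled $a_1,a_2$ at $\theta_0$. More precisely, since $a_1(\theta) = e^{f(\theta)}(-1+3\cos^2\theta)$ behaves like $e^{f(0)}(2 - 3\theta^2 + \cdots + 2f'(0)\theta + \cdots)$ near $\theta=0$, introducing a nonzero $f'(0)$ (respectively $f'(\pi)$) creates a linear term in $a_1$ and hence changes $a_1'(0)/a_2'(0)$ from its degenerate value; by choosing $f'(0)$ appropriately one can arrange the rotation class to be any prescribed value in a neighborhood of $\sqrt{3/2}$, and more generally (by making $f'$ large enough on a collar of the pole) to slide it down to any target $\e\in(0,\sqrt{3/2}]$. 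So the plan is: (i) fix a small collar $[0,\theta_*]\cup[\pi-\theta_*,\pi]$ of the poles; (ii) on this collar prescribe $f$ so that the induced change of the ratio in Lemma~\ref{lem:rotation} lands the rotation class exactly at $\e\bmod 1$ at both poles (the parameter $c\ge 0$ is an extra degree of freedom in how much total $C^0$-size $f$ uses, or equivalently measures how far $\e$ is pushed — it should be bookkeeping that will feed into Lemma~\ref{lem:nbhdintro}'s neighborhood construction); (iii) interpolate $f$ smoothly and nonpositively across the interior, keeping $f$ depending only on $\theta$. For nonpositivity one just normalizes $f$ so that $f\le 0$ throughout — always possible since only $f'$ at the poles and the shape of $f$ matter for the rotation classes and the contact condition, not the additive constant, so subtract $\max f$.

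The part requiring genuine care — and the main obstacle — is preserving the technical condition $a'\times a''(\theta_0)<0$ at every interior $\theta_0\in(0,\pi)$ satisfying~\eqref{eqn:angle}, simultaneously with hitting the rotation-class target at the poles. I would handle this by checking that $\lambda_0$ satisfies the \emph{strict} inequality $a'\times a''<0$ on all of $(0,\pi)$ (not just at the relevant rational angles) — this is noted in the excerpt for $\lambda_0$ — and then observing that $a'\times a''$ depends continuously (in $C^2$, hence the quantity $a'\times a''$ in $C^0$) on $f$. Therefore if $f$ is chosen $C^2$-small \emph{away from the poles} and the modification that changes the rotation class is concentrated in the pole collars $[0,\theta_*]$, $[\pi-\theta_*,\pi]$ where one verifies the inequality directly by the explicit local model, the strict negativity persists on the compact complement by continuity. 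The one subtlety is that sliding the rotation class by a large amount may force $f'$ to be large on the collar, which is not a small perturbation there; so one must verify $a'\times a''<0$ on the collar \emph{by hand} using the explicit forms of $a_1,a_2$ and the freedom in $f''$, rather than by a perturbative argument. This is a finite, explicit one-variable computation: write $f$ on $[0,\theta_*]$ as a specific elementary function (e.g.\ a suitable multiple of $\theta^2$ plus higher order, or $f(\theta) = -c\,\theta^2/(something)$ tuned to the target), substitute, and confirm the sign. I expect this collar computation, together with the check that the interpolation region doesn't introduce spurious Morse-Bott angles violating~\eqref{eqn:technical}, to be the only place where real work is needed; everything else is a continuity/normalization argument built on Lemma~\ref{lem:rotation}.
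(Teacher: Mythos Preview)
The proposal contains a genuine gap. A smooth function $f$ on $S^1\times S^2$ depending only on the polar angle $\theta$ must extend smoothly across the poles, which forces $f'(0)=f'(\pi)=0$ (equivalently, $f$ must be a smooth function of $\cos\theta$). Your plan to ``introduce a nonzero $f'(0)$'' therefore destroys smoothness of $e^{f}\lambda_0$ as a $1$-form on $S^1\times S^2$. Moreover, even treating the ratio $a_1'/a_2'$ at $\theta_0\in\{0,\pi\}$ as a limit, one always has $a_2'(\theta_0)=0$ (since $a_2=-\sqrt6\,e^{f}\cos\theta\sin^2\theta$ carries an overall factor of $\sin^2\theta$), so making $a_1'(\theta_0)\neq 0$ sends the ratio to $\pm\infty$ rather than to a finite target near $\sqrt{3/2}$. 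The correct parameter to tune is the \emph{second}-order behavior: writing $f'(\theta)=\tilde f(\theta)\sin\theta$ (which is exactly the condition ensuring smoothness at the poles), the rotation class at each pole depends affine-linearly on $\tilde f(\theta_0)$, and one solves for $\tilde f$ directly.

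The paper's proof does precisely this and is considerably simpler than your cut-and-interpolate scheme: after the substitution $f'=\tilde f(\theta)\sin\theta$, Lemma~\ref{lem:rotation} gives both rotation classes equal to $\e$ when $\tilde f(\theta)=(3-\sqrt6\,\e)\cos\theta$, whence the \emph{global} closed form
\[
f_{\e,c}(\theta)=-\tfrac{3-\sqrt6\,\e}{2}\cos^2\theta - c.
\]
No cutoff or interpolation is needed, nonpositivity is immediate from $\e\le\sqrt{3/2}$ and $c\ge 0$, the constant $c$ is just the integration constant (not a measure of ``how far $\e$ is pushed''), and the technical condition $a'\times a''<0$ is then a single brute-force one-variable check on all of $(0,\pi)$. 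Your ``main obstacle''---preserving the technical condition across an interpolation region---simply does not arise.
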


\begin{proof}
By Lemma~\ref{lem:rotation}, the rotation classes under consideration are
$$\left(\operatorname{sign}\lim_{\theta\to \theta_0}(3\cos\theta-\frac{1}{\cos\theta})\right)\frac{\frac{\partial f_{\e,c}}{\partial\theta}\frac{3\cos^2\theta-1}{\sin\theta}-6\cos\theta}{\sqrt{6}(1-3\cos^2\theta)-\frac{\partial f_{\e,c}}{\partial\theta}\sqrt{6}\cos\theta\sin\theta}\mod1
$$
for $\theta_0\in\lbrace0,\pi\rbrace$. After setting
$$\frac{\partial f_{\e,c}}{\partial\theta}:=\tilde f_\e(\theta)\sin\theta$$
for a smooth function $\tilde f_\e:[0,\pi]\to\RR$, it follows that the rotation classes of $e_0$ and $e_\pi$ are respectively given by
$$-\frac{1}{\sqrt{6}}\left(\tilde f_\e(\theta)-3\cos\theta\right)\Big\rvert_{\theta=0}\mod1$$
$$\frac{1}{\sqrt{6}}\left(\tilde f_\e(\theta)-3\cos\theta\right)\Big\rvert_{\theta=\pi}\mod1$$
These are both equal to $\e$ upon setting $\tilde f_\e(\theta)=(3-\sqrt{6}\e)\cos\theta$. We then pick the antiderivative of $\tilde f_\e$ using $c$ so that the desired function on $S^1\times S^2$ is
\begin{equation}
\label{eqn:f}
f_{\e,c}(t,\theta,\varphi)=-\frac{3-\sqrt{6}\e}{2}\cos^2\theta-c
\end{equation}
A brute force calculation shows that $e^{f_{\e,c}}\lambda_0$ satisfies the technical condition~\eqref{eqn:technical}.
\end{proof}

\begin{remark}
As a sanity check, if $\varepsilon=\sqrt{\frac{3}{2}}$ and $c=0$ then we recover $\lambda_0$.
\end{remark}

We now move forward and show how exactly $\lambda_0$ is to be modified to prove Lemma~\ref{lem:nbhdintro} in the introduction. Such modifications give us control over the orbits of low symplectic action, at the expense of producing new orbits of high symplectic action with unknown properties. This is sufficient for the purposes of this paper, because for a given class $A\in\Rel(X)$ only the orbit sets of symplectic action less than $\rho(A)$ are relevant to the tentative Gromov invariant.

\begin{lemma}
\label{lem:nbhd}
Suppose $(X,\omega)$ is a near-symplectic 4-manifold such that all components of $Z$ are untwisted zero-circles, and fix $A\in\Rel(X)$. Then there is a choice of neighborhood $\nN$ of $Z$ in $X$ such that $(X-\nN,\omega)$ is a symplectic manifold with contact-type boundary whose boundary components are copies of $(S^1\times S^2,\lambda_A)$. Here, $\lambda_A$ is a nondegenerate contact form with contact structure $\xi_0$ but whose orbits of symplectic action less than $\rho(A)$ are all $\rho(A)$-flat and are either positive hyperbolic or $\rho(A)$-positive elliptic.
\end{lemma}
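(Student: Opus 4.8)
The plan is to reduce the statement to a purely three‑dimensional construction on $(S^1\times S^2,\xi_0)$ and then to produce $\lambda_A$ by applying, in turn, Lemma~\ref{lem:f}, Lemma~\ref{lem:Bourgeois}, and Taubes' $\rho(A)$‑flat approximation of Section~\ref{L-flat approximations}. For the reduction, recall from \cite{Honda:local} and \cite{Taubes:HWZ}*{\S2e} that near an untwisted zero‑circle the near‑symplectic form $\omega$ admits a normal form in which a punctured tubular neighborhood of the circle is symplectomorphic to the negative end $\big((-\infty,s_0]\times S^1\times S^2,\ d(e^s\lambda_0)\big)$ of the symplectization of Taubes' contact form~\eqref{eqn:lambda}, with $s\to-\infty$ corresponding to the zero‑circle and $\{s=s_0\}$ to the original boundary $\partial\nN_0$ (here $s_0=-1$ with the normalization of Section~\ref{Main results}, and $\omega$ is exact on this punctured neighborhood, so such a model makes sense). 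For any smooth $G\colon S^1\times S^2\to(-\infty,s_0)$ the graph $\Sigma_G=\{(G(y),y)\}$ is a contact‑type hypersurface on which the Liouville field $\partial_s$ is transverse, pointing toward larger $s$, and $\iota_{\partial_s}d(e^s\lambda_0)|_{\Sigma_G}=e^G\lambda_0$; the region on the $\{s<G(y)\}$ side together with the zero‑circle is a smaller tubular neighborhood $\nN$ with $\partial\nN=(S^1\times S^2,e^G\lambda_0)$, and $X$ minus this neighborhood is a compact symplectic manifold with concave contact‑type boundary $\Sigma_G$. Doing this at every component of $Z$ with one and the same $G$, the lemma reduces to finding a smooth $G<s_0$ such that $\lambda_A:=e^G\lambda_0$ is a nondegenerate contact form whose Reeb orbits of action less than $\rho(A)$ are $\rho(A)$‑flat and are positive hyperbolic or $\rho(A)$‑positive elliptic. (If $\rho(A)\le0$ there are no admissible orbit sets below the threshold, the orbit conditions are vacuous, and the construction below run with any $L>0$ still applies; assume $\rho(A)>0$ henceforth.)

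To build $G$, set $L:=\rho(A)$. First fix $c>0$ large enough that $f_{\e,c}<s_0$ pointwise for every $\e\in(0,\sqrt{3/2}]$ --- by \eqref{eqn:f} the maximum of $f_{\e,c}$ equals $f_{\e,c}(\pi/2)=-c$, so $c>-s_0$ suffices --- and then fix $\e>0$ small enough that $\e<4\pi e^{f_{\e,c}(0)}/L$, which is possible because $4\pi e^{f_{\e,c}(0)}=4\pi e^{-(3-\sqrt6\,\e)/2-c}$ tends to the positive constant $4\pi e^{-3/2-c}$ as $\e\to0$ while the left‑hand side tends to $0$. A direct computation (cf.\ the proof of Lemma~\ref{lem:rotation}, using \eqref{eqn:lambda} and \eqref{eqn:f}) shows $4\pi e^{f_{\e,c}(0)}$ is exactly the symplectic action of each of the exceptional orbits $e_0,e_\pi$ of $e^{f_{\e,c}}\lambda_0$, so this inequality says precisely that $e_0$ and $e_\pi$ are $L$‑positive whenever their action is less than $L$. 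Now apply Lemma~\ref{lem:f} with these $\e,c$: the form $e^{f_{\e,c}}\lambda_0$ satisfies the technical condition~\eqref{eqn:technical}, is still of Morse--Bott type (its orbits lie on the tori $T(\theta_0)$ obeying~\eqref{eqn:angle}, together with $e_0,e_\pi$), and has $e_0,e_\pi$ of rotation class $\e\bmod1$. Then apply Lemma~\ref{lem:Bourgeois} to $e^{f_{\e,c}}\lambda_0$ with this $L$ and a sufficiently small $\delta$: every Morse--Bott torus of action less than $L$ becomes a positive hyperbolic orbit together with an $L$‑positive elliptic orbit of action less than $L$, a neighborhood of $e_0,e_\pi$ is untouched, no further embedded orbits of action less than $L$ appear, and since $\|f_{\delta,L}\|_{C^0}<\delta$ one keeps $f_{\delta,L}+f_{\e,c}<s_0$. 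At this point every embedded Reeb orbit of $e^{f_{\delta,L}+f_{\e,c}}\lambda_0$ of action less than $L$ is positive hyperbolic or $L$‑positive elliptic, and all such orbits are nondegenerate.

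Finally, run Taubes' $\rho(A)$‑flat approximation of Section~\ref{L-flat approximations} (with $L=\rho(A)$) on $e^{f_{\delta,L}+f_{\e,c}}\lambda_0$. By the properties recalled there it leaves the orbits of action less than $L$ and their actions unchanged, preserves each orbit's rotation class (so positive hyperbolicity and $L$‑positivity persist) and makes the asymptotic operators of the elliptic orbits complex linear, while moving the contact form by an arbitrarily $C^1$‑small amount; in particular the result is again $e^G\lambda_0$ for some $G<s_0$, and by Gray stability we may compose with a $C^1$‑small diffeomorphism so that its kernel is exactly $\xi_0$. A last $C^\infty$‑small perturbation supported away from the finitely many Reeb orbits of action less than $\rho(A)$ renders $\lambda_A$ globally nondegenerate without disturbing those orbits or their properties, by a standard argument (cf.\ \cite{Hutchings:lectures},\cite{Bourgeois:thesis}). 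Feeding the resulting $G$ into the reduction step proves the lemma.

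I expect the main obstacle to be verifying the mutual compatibility of these three modifications together with the geometric reduction: that the punctured neighborhood of an untwisted zero‑circle genuinely is the negative end of the symplectization of $(S^1\times S^2,\lambda_0)$; that Bourgeois' perturbation in Lemma~\ref{lem:Bourgeois} really leaves the exceptional orbits and the effect of the technical condition intact; that the $\rho(A)$‑flat approximation preserves both the positive‑hyperbolic/$\rho(A)$‑positive‑elliptic dichotomy and the contact structure; and that the numerology can indeed be arranged so that $e_0$ and $e_\pi$ are $\rho(A)$‑positive while $G$ simultaneously stays below $s_0$, so that $\Sigma_G$ bounds an honest neighborhood of $Z$.
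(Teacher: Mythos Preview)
Your proposal is correct and follows essentially the same approach as the paper's proof: apply Lemma~\ref{lem:f} to tame the exceptional orbits, Lemma~\ref{lem:Bourgeois} to resolve the Morse--Bott tori, then Taubes' $L$-flat approximation and a final generic perturbation away from the low-action orbits, and realize the resulting $\lambda_A=e^{F_A}\lambda_0$ as the contact-type boundary of a neighborhood of $Z$ using the symplectization model near the zero-circles. The paper differs only in presentation---it takes $L$ slightly larger than $\rho(A)$ for margin, phrases the geometric step as choosing $\nN$ inside a finite collar $[-\kappa,-1)\times S^1\times S^2$ with $\kappa\ge-\inf F_A$ (rather than as a graph in the full negative end), and explicitly records that $\rho(A)$ is unchanged because the intervening cobordism is exact.
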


\begin{proof}
By Lemma~\ref{lem:f}, for any $L$ sufficiently greater than $\rho(A)$ there is a smooth function $f_{\e,c}$ on $S^1\times S^2$ for any $c\ge0$ and sufficiently small choice of $\e$ such that the exceptional orbits of $e^{f_{\e,c}}\lambda_0$ are $L$-positive. By Lemma~\ref{lem:Bourgeois}, we can perturb this new Morse-Bott contact form (\`a la Bourgeois) so that all orbits of action less than $L$ are nondegenerate and $L$-positive when elliptic. As explained in Section~\ref{L-flat approximations}, we can perturb the resulting contact form (\`a la Taubes) so that all orbits of action less than $L$ are furthermore $L$-flat. This new contact form is still degenerate, but it can be perturbed to become nondegenerate without disturbing the existing orbits of action less than $L$ and without introducing other orbits of action less than $L$. Explicitly, let $U\subset S^1\times S^2$ be the union of $L$-flat neighborhoods of the orbits of action less than $L$, which do not contain any other orbits, and use \cite{Bourgeois:survey}*{Lemma 2.2} to make a generic perturbation of the contact form on the complement of $U$.

The final result is a contact form $\lambda_A$ which has the same contact structure as $\lambda_0$, so for $c$ sufficiently large there exists a smooth negative function $F_A\in C^\infty(S^1\times S^2)$ such that the maximum value of $F_A$ over $S^1\times S^2$ is less than $-1$ and
$$\lambda_A=e^{F_A}\lambda_0$$
 
It remains to show that the neighborhood $\nN$ can be chosen so that the contact form on any boundary component of $X-\nN$ is $\lambda_A$, preserving compatibility with $\omega$. Assume without loss of generality that $Z$ is a single circle. We first choose the neighborhood $\nN_\ast$ of $Z$ as specified in Section~\ref{Main results}, but constrained further so that for any fixed $\kappa>1$ there is a neighborhood $\nN_\kappa\subset\nN_\ast$ of $Z$ such that $(\nN_\ast-\nN_\kappa,\omega)$ is symplectomorphic to $\big([-\kappa,-1)\times S^1\times S^2,d(e^s\lambda_0)\big)$, where $s$ denotes the coordinate on $[-\kappa,-1)$. The fact that we may find such a neighborhood will be proved at the end. We choose $\kappa\ge-\inf F_A>1$, because $(\nN_\ast-\nN_\kappa,\omega)$ then contains the contact hypersurface
$$\Big\{(s,x)\in\RR\times(S^1\times S^2)\;\big\rvert\;s=F_A(x)\Big\}$$
with contact form $\lambda_A$. Now our desired neighborhood is
$$\nN:=\nN_\kappa\cup\Big\{(s,x)\in[-\kappa,-1)\times(S^1\times S^2)\;\big\rvert\;s\le F_A(x)\Big\}$$
which is the subset of $\nN_\ast$ ``below the contact hypersurface'' (a schematic is given by Figure~\ref{fig:N}). Note that the quantity $\rho(A)$ has not changed, since $(X-\nN,\omega)$ differs from $(X-\nN_\ast,\omega)$ by composition with an exact symplectic cobordism.

\begin{figure}
    \centering
    \includegraphics[width=6cm]{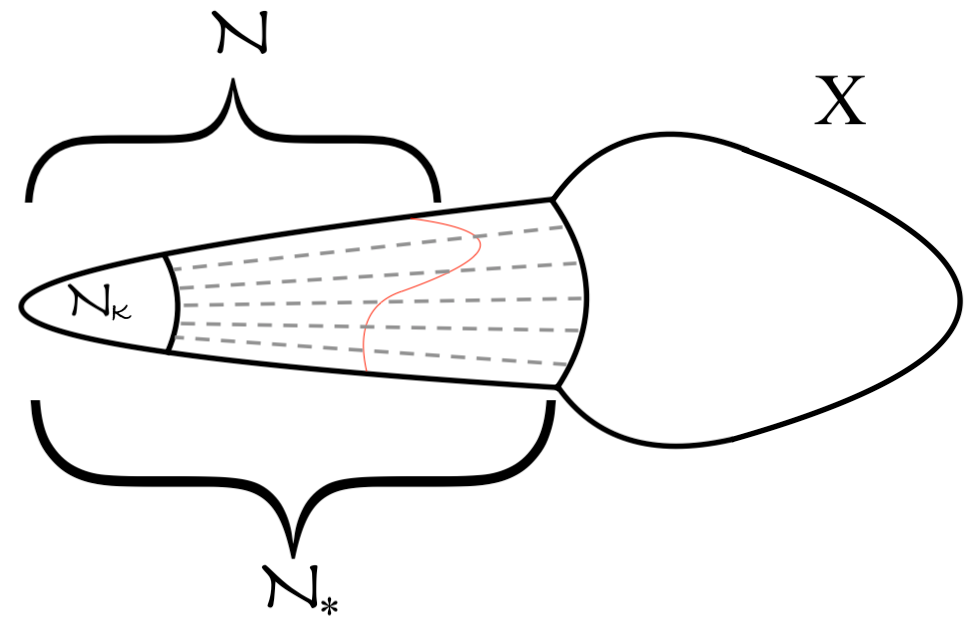}
    \caption{Hypersurface defined by $F_A$ in red}
    \label{fig:N}
\end{figure}

We supply here the construction of $\nN_\ast$, as alluded to by \cite{Taubes:HWZ}*{Lemma 2.3}, and we do it in two (not so different) ways. The first way is to assume that we have already found a neighborhood $\nN_0$ of $Z$ such that $\omega|_{\partial\nN_0}=e^{-1}d\lambda_0$ (the neighborhood specified in Section~\ref{Main results}), so that there is a collar neighborhood $(-s_0,0]\times\partial\nN_0$ of $\partial\nN_0$ for which $\omega$ takes the form $d(e^{s-1}\lambda_0)$. Then we ``stretch the neck'' along this collar neighborhood, thereby inserting a compact piece of the symplectization $\left([-\kappa,-1]\times S^1\times S^2,d(e^s\lambda_0)\right)$ to $X$ while deforming $\omega$ (but keeping $\omega$ fixed on $X-\nN_0$), and apply the Moser trick (in the near-symplectic setting \cite{Honda:local}) to obtain our constrained neighborhood $\nN_\ast\subset X$ without deforming $\omega$. The second way is a slight variation of the first way. By \cite{Honda:local}*{Theorem 7} there is a deformation of $\omega$ on $X$ to a near-symplectic form that is ``standard'' in a neighborhood of $Z$. This standard local model $(S^1\times B^3,\omega_0)$ of a near-symplectic form near its zero-set is given in \cite{Taubes:HWZ}*{\S2e} (see also \cite{Honda:local}), which on a deleted neighborhood $S^1\times(B^3-\lbrace0\rbrace)$ diffeomorphic to $(-\infty,0]\times S^1\times S^2$ takes the form
$$\omega_0=-d\big(e^{s-\ln2}(1-3\cos^2\theta)dt+e^{\frac{3}{2}s-\ln\sqrt{6}}\sqrt{6}\cos\theta\sin^2\theta\,d\varphi\big)$$
This 2-form is not the symplectic form of a symplectization, but can be deformed so that on the region $[-\kappa,-1]\times S^1\times S^2$ it is $d(e^{s-1}\lambda_0)$, while it is not changed for both sufficiently small and sufficiently large $s\in(-\infty,0]$. Now the Moser trick \cite{Honda:local} is applied to obtain our constrained neighborhood $\nN_\ast\subset X$ without deforming $\omega$.
\end{proof}

\subsection{ECH cobordism maps}
\label{ECH cobordism maps}

\indent\indent
Fix $A\in\Rel(X)$. Thanks to Lemma~\ref{lem:nbhd}, we choose $\nN$ so that $(X_0,\omega)$ is a strong symplectic cobordism from the empty set $(\varnothing,0)$ to a disjoint union of $N$ copies of the contact 3-manifold $(S^1\times S^2,\lambda_A)$. Let $\overline X$ denote its completion, and fix a cobordism-admissible almost complex structure $J$ on $(\overline X,\omega)$. As shown in \cite{Hutchings:fieldtheory}, there are induced \textit{ECH cobordism maps} of the form
\begin{equation}
\label{eqn:chainmap}
ECH_0(\varnothing,0,0)\to ECH_\ast(\bigsqcup_{k=1}^NS^1\times S^2,\xi_0,1)
\end{equation}
defined by suitable counts of Seiberg-Witten instantons on $\overline X$. Since $ECH_0(\varnothing,0,0)\cong\ZZ$ is generated by the empty set of orbits, and a choice of ordering of the components of $\partial X_0$ defines an identification of $ECH_\ast(\bigsqcup_{k=1}^NS^1\times S^2,\xi_0,1)$ with $\bigotimes_{k=1}^NECH_\ast(S^1\times S^2,\xi_0,1)$, the map~\eqref{eqn:chainmap} should really be viewed as an element
$$\Phi_A\in \bigotimes_{k=1}^NECH_\ast(S^1\times S^2,\xi_0,1)$$
We now present a definition of $\Phi_A$ via counts of $J$-holomorphic curves in $\overline X$. In the sequel \cite{Gerig:Gromov} we will show that these two definitions coincide.

\begin{remark}
\label{I<0}
As explained in \cite{Hutchings:lectures,Hutchings:fieldtheory}, the main problem with constructing ECH cobordism maps via $J$-holomorphic curves is that negative ECH index curves can arise and it is currently unknown how to count them appropriately. In this paper, negative ECH index curves (with punctures) do not arise in the counts for $\Phi_A$ thanks to our choice of contact form for the boundary components of $X_0$. If we used $\lambda_0$ instead, the rotation classes of the exceptional orbits would allow the Conley-Zehnder index of their multiple covers to get large enough in magnitude to force the ECH index to become negative:

\begin{example}
Let $C\hookrightarrow\overline{X}$ be an embedded $J$-holomorphic plane asymptotic to the exceptional orbit $e_0$ with multiplicity 1, such that $\ind(C)=I(C)=0$. Fix the trivialization $\tau$ of $e_0^\ast\xi_0$ so that the rotation number of $e_0$ is $\sqrt{\frac{3}{2}}-1$. Then $c_\tau(C)=1$, $Q_\tau(C)=0$, $CZ_\tau(e_0)=1$, and
$$I(d\cdot C)=d\cdot c_\tau(C)+d^2Q_\tau(C)+CZ_\tau^I(d\cdot C)=-2\sum_{k=1}^d\lfloor k(\sqrt{\frac{3}{2}}-1)\rfloor$$
which is negative for $d\ge5$.
\end{example}
\end{remark}

If $X$ is not minimal, let $\eE_\omega\subset H_2(X_0,\partial X_0;\ZZ)$ denote the set of classes represented by \textit{symplectic} exceptional spheres in $X_0$. Note that there is a nondegenerate bilinear pairing
$$H_2(X_0;\ZZ)\otimes H_2(X_0,\partial X_0;\ZZ)\stackrel{\cdot}{\to}\ZZ$$
induced by Poincar\'e-Lefschetz duality and the relative cap-product, and $E\cdot E=-1$ for any $E\in\eE_\omega$. By intersection positivity of $J$-holomorphic curves, a $J$-holomorphic exceptional sphere is the unique $J$-holomorphic curve in its homology class.

In this case there is an addendum to Remark~\ref{I<0}. As with Taubes' Gromov invariants for closed symplectic 4-manifolds, the trouble with multiply covered exceptional spheres is that they have negative ECH index: a $J$-holomorphic exceptional sphere with multiplicity $d>1$ has index $-d(d-1)$. If $E\cdot A<-1$ for some $E\in\eE_\omega$, then it may be the case that $E$ is represented by a $J$-holomorphic exceptional sphere and $A$ is represented by a curve with an exceptional sphere component having multiplicity greater than one. In the next section we will see that this issue does not occur when $E\cdot A\ge-1$ for all $E\in\eE_\omega$ (such as when $X$ is minimal).

\begin{remark}
If $X_0$ is a closed symplectic 4-manifold (i.e. $\overline X=X_0=X$) then every class in $\eE_\omega$ has a $J$-holomorphic representative \cite{McDuff:rationalRuled}*{Lemma 3.1}, but the proof does not carry over in general when $\partial X_0\ne\varnothing$. The refinements to Taubes' Gromov invariants, introduced by McDuff \cite{McDuff:Gr}, thus do not always work for the near-symplectic Gromov invariants.

The reason we cannot repeat the proof of \cite{McDuff:rationalRuled}*{Lemma 3.1} for manifolds with boundary is the following. To argue that for generic cobordism-admissible $J$ on $\overline X$ any symplectic exceptional sphere is isotopic to a $J$-holomorphic one, we consider a parametrized moduli space of Fredholm index 0 spheres (representing a class in $\eE_\omega$) along a generic path of almost complex structures from a suitable reference almost complex structure to our desired $J$. We would need to establish compactness of this parametrized moduli space, i.e. that no sequence of holomorphic spheres breaks nor becomes nodal. But this cannot be guaranteed: a sequence of spheres can break at a non-generic almost complex structure, for which there can exist levels of the broken curve with Fredholm index $-1$, because the Fredholm index of a somewhere-injective curve is at least $-1$ (i.e. not necessarily $0$) for a generic path of almost complex structures. Breaking does not occur for closed 4-manifolds.
\end{remark}

\subsection{When there are no multiply covered exceptional spheres}
\label{When there are no multiply covered exceptional spheres}

\indent\indent
In this section we assume that either $X$ is minimal or $E\cdot A\ge-1$ for all $E\in\eE_\omega$. To construct the counts of $J$-holomorphic curves for Theorem~\ref{main theorem}, we first make the following choices: 

\bigskip
	$\bullet$ an integer $I\ge0$,
	
	$\bullet$ an integer $p\in\lbrace0,\ldots,I\rbrace$ such that $I-p$ is even,
	
	$\bullet$ an ordered set of $p$ disjoint oriented loops $\bar\eta:=\lbrace\eta_1,\ldots,\eta_p\rbrace\subset X_0$,
	
	$\bullet$ a set of $\frac{1}{2}(I-p)$ disjoint points $\bar z:=\lbrace z_1,\ldots,z_{(I-p)/2}\rbrace\subset X_0-\bar\eta$.

\bigskip	
\noindent Denote by $\mM_I(\varnothing,\Theta;A,\bar z,\bar\eta)$ the subset of elements in $\mM_I(\varnothing,\Theta)$ which represent the class $A$ and intersect all points $\bar z$ and all loops $\bar\eta$. Define the chain
\begin{equation}
\label{eqn:chain}
\Phi^I_{J,\bar z}(A,\bar\eta):=\sum_\Theta\sum_{\cC\in\mM_I(\varnothing,\Theta;A,\bar z,\bar\eta)}q(\cC)\cdot\Theta\in \bigotimes_{k=1}^NECC_\ast(S^1\times S^2,\lambda_A,1)
\end{equation}
where $\Theta$ indexes over the admissible orbit sets, and $q(\cC)\in\ZZ$ are weights that will be specified later in Section~\ref{Orientations and weights}. This is the chain which is to be used for Theorem~\ref{main theorem} and Definition~\ref{defn:Gr}.

The existence of curves in $\mM_I(\varnothing,\Theta;A,\bar z,\bar\eta)$ implies
$$\aA(\Theta)\le\rho(A)$$
so there are only finitely many orbit sets $\Theta$ which can arise in $\Phi^I_{J,\bar z}(A,\bar\eta)$. The fact that~\eqref{eqn:chain} is well-defined, for the given choices of data $\lbrace J,\bar z,\bar\eta\rbrace$, then follows from the following proposition.\footnote{Proposition~\ref{prop:finite} is the analog of \cite{Taubes:SWtoGr}*{Proposition 7.1} (also \cite{Taubes:counting}*{Proposition 4.3}) for closed symplectic manifolds and \cite{Hutchings:lectures}*{Lemma 5.10} for symplectizations.}

\begin{prop}
\label{prop:finite}
In the above setup, for generic $J$ and $\bar z\cup\bar\eta$, the moduli space $\mM_I(\varnothing,\Theta;A,\bar z,\bar\eta)$ is a finite set for each admissible orbit set $\Theta$.
\end{prop}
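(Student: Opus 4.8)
The plan is to establish finiteness by the standard dimension-count-plus-compactness argument, adapted to the punctured setting of ECH. First I would fix the admissible orbit set $\Theta$ (there are only finitely many to consider, since $\aA(\Theta)\le\rho(A)$), and analyze the moduli space $\mM_I(\varnothing,\Theta;A,\bar z,\bar\eta)$ by splitting an element $\cC=\sum d_kC_k$ into its underlying somewhere-injective curves $C_k$ with multiplicities $d_k$. The key input is that, because of our hypothesis (either $X$ minimal or $e\cdot A\ge-1$ for all $e\in\eE_\omega$), no $C_k$ can be a multiply covered exceptional sphere; I would first argue this, using intersection positivity and the fact that a $J$-holomorphic exceptional sphere is the unique holomorphic curve in its class with negative self-intersection, so that a component $C_k$ carrying $d_k>1$ and negative ECH index would force some $e\in\eE_\omega$ with $e\cdot A<-1$, contradicting the hypothesis. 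Having ruled this out, the index inequality $\ind(\cC)\le I(\cC)-2\delta(\cC)$ applies to the somewhere-injective pieces, and the $\rho(A)$-positivity of the elliptic orbits (together with the absence of other low-action orbits, from Lemma~\ref{lem:nbhd}) keeps all the relevant Conley–Zehnder contributions and hence ECH indices under control, ruling out negative-index components entirely.

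Next I would run the dimension count. For generic $J$, each somewhere-injective $C_k$ is cut out transversally, so the space of such curves near $C_k$ is a manifold of dimension $\ind(C_k)$. Passing through a point $z_i\in X_0$ is a codimension-$2$ condition and meeting a generic loop $\eta_j$ a codimension-$1$ condition; choosing $\bar z\cup\bar\eta$ generically (away from the finitely many relevant moduli spaces and their images), these constraints are realized transversally. A curve in $\mM_I(\varnothing,\Theta)$ has ECH index $I$, and by the relation between ECH index and Fredholm index for currents with components constrained as above — together with $\delta(\cC)\ge0$ and the partition conditions forcing the multiple covers to be of $\RR$-invariant type only in symplectization levels, which do not occur here since $\overline X$ has only positive ends — the total Fredholm index of the constrained configuration is $\le 0$. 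Combined with $\ind\ge 0$ for each somewhere-injective component (again by genericity and $\rho(A)$-positivity ruling out negative index), this pins the dimension of $\mM_I(\varnothing,\Theta;A,\bar z,\bar\eta)$ to $0$: it is a $0$-dimensional manifold.

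Finally I would invoke compactness to upgrade ``$0$-dimensional manifold'' to ``finite set.'' By the energy bound $\aA(\Theta)\le\rho(A)$ and the genus bound implicit in the fixed class $A$ (bounded via the adjunction/ECH-index relation), any sequence in $\mM_I(\varnothing,\Theta;A,\bar z,\bar\eta)$ has a subsequence converging, in the SFT sense of \cite{SFTcompactness} and the current sense of \cite{Taubes:currents}*{Proposition 3.3}, to a broken $J$-holomorphic current $(\cC_1,\ldots,\cC_n)$ with $\sum_k[\cC_k]=A$ and total ECH index $I$ (ECH index is additive under gluing/breaking). A standard argument shows that if the limit were genuinely broken ($n>1$), one of the symplectization levels would have a nontrivial non-$\RR$-invariant current of ECH index $\ge1$, forcing the cobordism level to have ECH index $\le I-1<I$; but then, because the point and loop constraints must be distributed among the levels and the symplectization of $S^1\times S^2$ contains no base points or loops of $X_0$, the cobordism level alone must still satisfy all constraints, and the genericity dimension count above shows a constrained cobordism-level current of ECH index $<I$ has negative formal dimension, hence is empty — a contradiction. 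Therefore every limit lies in $\mM_I(\varnothing,\Theta;A,\bar z,\bar\eta)$ itself, so the moduli space is compact; being also a $0$-manifold, it is finite.

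The main obstacle I expect is the bookkeeping in the broken-limit step: one must carefully account for how the $\delta$-invariant, the ECH index, and the point/loop constraints split across a multi-level building (including the possibility of multiply covered components in the limit that were not present along the sequence), and verify that no configuration of strictly smaller cobordism-level ECH index can absorb all the constraints. This is precisely the analog of the arguments in \cite{Taubes:counting}*{Proposition 4.3} and \cite{Hutchings:lectures}*{Lemma 5.10}, and the novelty here is only in checking that the modified contact form $\lambda_A$ of Lemma~\ref{lem:nbhd} — in particular the $\rho(A)$-positivity of the exceptional orbits — prevents the negative-ECH-index pathologies flagged in Remark~\ref{I<0} from reappearing in the limit.
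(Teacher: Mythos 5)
Your overall strategy -- compactness plus dimension count, with $\rho(A)$-positivity taming the low-action orbits and the hypothesis $e\cdot A\ge -1$ ruling out multiply covered exceptional spheres -- matches the paper's in broad outline, and the reduction to the inequality in Proposition~\ref{prop:moduli} for the ``no lower-index cobordism level satisfying all constraints'' step is essentially correct. But there is a genuine gap that your write-up glosses over, and it is the heart of the proposition.

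The problem is the multiply covered \emph{special} curves (tori and planes of Fredholm/ECH index $0$). Proposition~\ref{prop:moduli} shows these \emph{do} appear as components $(C,d)$ with $d>1$ in the currents being counted. Your dimension-count step -- ``each somewhere-injective $C_k$ is cut out transversally, so $\mM_I$ is a $0$-dimensional manifold'' -- simply does not apply near such multiply covered components: the moduli space of currents is not a manifold there, because the standard genericity result only gives surjectivity of $D_C$ for the \emph{underlying} embedded curve, not any regularity statement for the cover. The mode of noncompactness you must rule out is not breaking into levels but a sequence of \emph{embedded} $J$-holomorphic curves limiting to a $d$-fold cover of a special plane or torus of the \emph{same} ECH index. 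To exclude this the paper introduces the ``cut out transversally'' condition of Definition~\ref{def:transversally}, which demands \emph{injectivity} (not surjectivity) of the pull-back operator $D_f$ on unbranched covers, and then argues that a limiting sequence of embedded curves onto a multiple cover would produce a nontrivial element of $\ker D_f$, a contradiction. Your proposal never invokes injectivity of $D_f$, so this degeneration is not addressed. Relatedly, the ``claim'' that only finitely many multiply covered configurations can occur in the sequence needs its own argument (the paper reduces it to the same isolation statement), and that finiteness is exactly what lets you pass from current compactness to SFT compactness: SFT compactness requires a genus bound, which fails for sequences of branched covers, so the paper first fixes the multiply covered components and only then bounds the genus of the remaining embedded pieces via \cite{Hutchings:revisited}*{Corollary 6.10, Proposition 6.14}. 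Your single appeal to ``the genus bound implicit in the fixed class $A$'' does not supply this.

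Two smaller inaccuracies: $\overline X$ has only \emph{negative} symplectization ends here (the cobordism is from $\varnothing$ to $\bigsqcup S^1\times S^2$), not positive ones; and at the level of broken \emph{curves} (as opposed to currents), symplectization levels consisting of nontrivially branched covers of $\RR$-invariant cylinders (``connectors'') are permitted and have ECH index $0$, so your claim that any symplectization level has ECH index $\ge 1$ is only valid for currents. The paper explicitly rules out connector levels separately, via the Fredholm index additivity argument of \cite{HutchingsTaubes:gluing1}*{Lemma 7.19} together with Lemma~\ref{lem:coverFredholm}.
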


Before we prove this proposition we must figure out what the curves in $\mM_I(\varnothing,\Theta;A,\bar z,\bar\eta)$ look like. Given two distinct connected somewhere-injective curves $C_1$ and $C_2$, denote by $C_1\cdot C_2$ the algebraic count of intersections. In our analysis we will make use of a ``self-intersection'' number that depends on $\rho(A)$ and stems from \cite{Hutchings:beyond}*{Definition 4.7}. Given a connected somewhere-injective curve $C$, define
$$C\cdot C:=\frac{1}{2}\left[2g(C)-2+\ind(C)+h(C)+2e_A(C)+4\delta(C)\right]$$
where $g$ denotes its genus, $e_A$ denotes the total multiplicity of all $\rho(A)$-positive elliptic orbits, and $h$ denotes the number of ends at hyperbolic orbits. Note that when $C$ has no punctures, $C\cdot C$ is the algebraic count of self-intersections.

Next, it is crucial to know that for any multiply covered curve arising in our analysis, its Fredholm index is nonnegative. This is granted by the following lemma.

\begin{lemma}
\label{lem:coverFredholm}
Suppose $J$ is generic and $f:\tilde C\to C$ is a $d$-fold branched cover (with $b$ branch points) of a somewhere-injective $J$-holomorphic curve $C\to\overline X$, where the ends of $C$ are asymptotic to either positive hyperbolic orbits or $d$-positive elliptic orbits. Then $\ind(\tilde C)\ge b$. Furthermore, if $\tilde C$ is connected and $C$ is a Fredholm index 0 plane whose end is asymptotic to a $d$-positive elliptic orbit, then $\ind(\tilde C)>0$ unless $f=\1$.
\end{lemma}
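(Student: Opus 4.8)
The plan is to compute $\ind(\tilde C)$ via the Riemann--Hurwitz formula together with the asymptotic contributions, and then to use the $d$-positivity hypothesis to control the Conley--Zehnder terms. First I would fix a trivialization $\tau$ of $\gamma^\ast\xi_0$ for each embedded orbit $\gamma$ appearing as an asymptotic limit of $C$, chosen as in Section~\ref{subsec:ECH} so that $CZ_\tau(\gamma^m)=0$ for $\gamma$ positive hyperbolic and, for a $d$-positive elliptic orbit $\gamma$ with rotation number $\theta\in(0,\aA(\gamma)/L)$ (here $L=d\cdot\aA(\gamma)$ is the relevant threshold when $\tilde C$ is a $d$-fold cover), $CZ_\tau(\gamma^m)=2\lfloor m\theta\rfloor+1=1$ for all $m\le d$. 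The key identity is that the Fredholm index is additive under pullback of the relative first Chern term and the Euler-characteristic term transforms by Riemann--Hurwitz: if $\tilde C\to C$ is a $d$-fold cover with $b$ branch points (counted with multiplicity), then $\chi(\tilde C)=d\,\chi(C)-b$ and $c_\tau(\tilde C)=d\,c_\tau(C)$ for the pulled-back trivializations. The end-by-end Conley--Zehnder sum $CZ^{\ind}_\tau(\tilde C)$ is a sum over the ends of $\tilde C$, each of which covers an end of $C$ with some multiplicity, and the total multiplicity over each end of $C$ is $d$.

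Carrying this out: write $\ind(C)=-\chi(C)+2c_\tau(C)+CZ^{\ind}_\tau(C)$, so $d\cdot\ind(C)=-d\chi(C)+2d\,c_\tau(C)+d\,CZ^{\ind}_\tau(C)$, and
$$\ind(\tilde C)=-\chi(\tilde C)+2c_\tau(\tilde C)+CZ^{\ind}_\tau(\tilde C)=-d\chi(C)+b+2d\,c_\tau(C)+CZ^{\ind}_\tau(\tilde C).$$
Hence $\ind(\tilde C)=d\cdot\ind(C)+b+\big(CZ^{\ind}_\tau(\tilde C)-d\cdot CZ^{\ind}_\tau(C)\big)$. It then remains to show the parenthetical defect is $\ge 0$. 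For a positive hyperbolic end of $C$ with multiplicity $m_C$, covered by ends of $\tilde C$ with multiplicities $m_1,\dots,m_r$ summing to $d\,m_C$: each $CZ_\tau$ value is $0$, so this end contributes $0$ to the defect. For a $d$-positive elliptic end of $C$ with multiplicity $m_C$ (so $m_C\le$ the threshold forces all relevant covers to have $CZ_\tau=1$): the ends of $\tilde C$ covering it contribute $r\cdot 1=r$, while $d$ copies of the $C$-end contribute $d\cdot 1=d$; since a branched cover restricted to a disk end with $r$ distinct preimage ends has at least $d\,m_C-r$ branching (hence $r\le d\,m_C\le d$ when $m_C=1$, and more care for general $m_C$), one gets the defect here is $\ge -(d-r)$, and this deficit is compensated by the corresponding branch points counted in $b$. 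Collecting all of this with $\ind(C)\ge 0$ (which holds since $J$ is generic and $C$ is somewhere injective, so $\ind(C)\ge I(C)-2\delta(C)\ge 0$ after noting the relevant curves have $I\ge 0$; in fact for the cases at hand $\ind(C)\in\{0,1\}$) yields $\ind(\tilde C)\ge b$, possibly after a bookkeeping argument showing that each elliptic-end deficit $d-r$ is bounded by the number of branch points lying over that end.

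For the second assertion, suppose $\tilde C$ is connected and $C$ is an $\ind=0$ plane asymptotic to a single $d$-positive elliptic orbit $\gamma$ with multiplicity $1$. Then $\tilde C$ is a connected $d$-fold branched cover of a plane, so $\tilde C$ is a sphere with one puncture covering the disk end, i.e. $\tilde C$ is itself a plane, and its end is asymptotic to $\gamma^d$ with $CZ_\tau(\gamma^d)=1$. Riemann--Hurwitz for a degree-$d$ branched cover of a disk by a disk gives $b=d-1$ branch points. Plugging into the formula above: $\ind(\tilde C)=d\cdot 0+(d-1)+(1-d\cdot 1)=0$. This is the naive count; the point is that the genuine statement $\ind(\tilde C)>0$ unless $f=\1$ must come from a \emph{strict} improvement, which I expect to obtain from the automatic-transversality / positivity-of-branching refinement: a nonconstant branched cover of a plane by a plane of degree $d\ge 2$ cannot have all its branching concentrated so as to saturate the Riemann--Hurwitz bound while remaining holomorphic with the given asymptotics — more precisely, one invokes the $d$-positivity inequality $\theta<\aA(\gamma)/(d\,\aA(\gamma))=1/d$ \emph{strictly}, which forces $CZ_\tau(\gamma^d)=1$ but also gives room in a writhe/winding estimate (in the spirit of Hutchings's index inequality and \cite{Hutchings:beyond}*{Lemma 4.9}) that bounds $\delta(\tilde C)$ and hence improves the index by at least the amount $2\delta(\tilde C)$, and $\tilde C=f^{-1}(C)$ with $d\ge 2$ forces $\delta(\tilde C)\ge 1$ unless $f$ is an isomorphism. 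The main obstacle I anticipate is exactly this last point: making the strict inequality $\ind(\tilde C)>0$ rigorous rather than the weak $\ind(\tilde C)\ge 0$, which requires carefully accounting for the singularities forced on a multiple cover of an embedded plane and invoking the precise form of the relative adjunction/index inequality for the cover — I would model this step closely on Hutchings's treatment in \cite{Hutchings:beyond}.
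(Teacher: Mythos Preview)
Your overall strategy matches the paper's: expand $\ind(\tilde C)$ via Riemann--Hurwitz and compare Conley--Zehnder terms. However, there are two concrete errors that derail the argument, and once they are fixed the proof becomes the short computation in the paper, with no need for the adjunction/writhe salvage you attempt at the end.

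\textbf{Sign error in the CZ defect.} The ends here are all \emph{negative} ends, so $CZ^{\ind}_\tau(C)=-\sum_j CZ_\tau(\beta_j^{n_j})$. With $\tau$ chosen so that positive hyperbolic orbits have $CZ_\tau=0$ and the relevant elliptic covers have $CZ_\tau=1$, we get $CZ^{\ind}_\tau(C)=-e$ and $CZ^{\ind}_\tau(\tilde C)=-\tilde e$, where $e,\tilde e$ are the numbers of elliptic ends of $C,\tilde C$. Hence the defect is $(-\tilde e)-d(-e)=de-\tilde e\ge 0$, because each end of $C$ has at most $d$ preimage ends. You computed the defect with the wrong sign (as $r-d$ rather than $d-r$), which is why you were forced to try to ``compensate with branch points'' --- but those branch points are already counted in $b$ and cannot be used twice. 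With the correct sign, $\ind(\tilde C)=d\cdot\ind(C)+b+(de-\tilde e)\ge b$ follows immediately from $\ind(C)\ge 0$. (Also, your justification of $\ind(C)\ge 0$ inverts the index inequality; the correct reason is simply that generic $J$ makes somewhere-injective curves regular.)

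\textbf{Topological error in the second part.} A connected branched cover of a plane need not be a plane: it can have any genus $g\ge 0$ and any number $\tilde e\ge 1$ of punctures. With $\chi(C)=1$, Riemann--Hurwitz reads $2-2g-\tilde e=d-b$, i.e. $b=d+2g-2+\tilde e$. Plugging into the (sign-corrected) formula gives
\[
\ind(\tilde C)=d\cdot 0+b+(d-\tilde e)=2(d+g-1),
\]
which is strictly positive unless $d=1$ and $g=0$, i.e. $f=\1$. So the strict inequality is a two-line computation, not a delicate adjunction argument; your attempt via $\delta(\tilde C)\ge 1$ is both unnecessary and unjustified (a multiple cover of an embedded plane can be an embedding of the domain).
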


\begin{proof}
The trivialization $\tau$ is assumed to be chosen so that the Conley-Zehnder indices of the orbits are as specified in Section~\ref{subsec:ECH}. Let $e$ (and $\tilde e$) denote the number of ends of $C$ (and $\tilde C$) at elliptic orbits. Then we compute
$$\ind(\tilde C)=b-d\cdot\chi(C)+2d\cdot c_\tau(C)-\tilde e=d\cdot\ind(C)+b+(de-\tilde e)$$
which is nonnegative because $\tilde e\le de$ and $\ind(C)\ge0$ by genericity of $J$.

If $C$ is the plane satisfying the hypotheses in the statement of the lemma (in particular, $e=1$), and $\tilde g$ denotes the genus of $\tilde C$, then the Riemann-Hurwitz formula implies
$$b=d+2\tilde g-2+\tilde e$$
and hence
$$\ind(\tilde C)=2(d+\tilde g-1)\ge0$$
which is zero if and only if $f=\1$.
\end{proof}

Finally, there will be certain pseudoholomorphic curves in $\overline X$ that require special attention. We give them a name:

\begin{definition}
A $J$-holomorphic curve in $\overline X$ is \textit{special} if it has Fredholm/ECH index zero, and is either an embedded torus or an embedded plane whose negative end is asymptotic to an embedded elliptic orbit with multiplicity one.
\end{definition}

\begin{prop}
\label{prop:moduli}
For generic $J$, every current in $\mM_I(\varnothing,\Theta;A,\bar z,\bar\eta)$ takes the following form. Its underlying components are embedded, pairwise disjoint, and pairwise do not both have negative ends at covers of the same elliptic orbit. An embedded component which intersects $l$ of the points in $\bar z$ and $l'$ of the loops in $\bar\eta$ has (ECH and Fredholm) index $2l+l'$. A component can be multiply covered only when it is special. In particular, the exceptional sphere components all have multiplicity one.
\end{prop}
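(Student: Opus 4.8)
The plan is to take an arbitrary current $\cC=\sum d_k C_k\in\mM_I(\varnothing,\Theta;A,\bar z,\bar\eta)$ and extract structural constraints from the ECH index machinery, exactly as in Hutchings' ECH index calculus but now keeping track of the $\rho(A)$-flatness and $\rho(A)$-positivity afforded by Lemma~\ref{lem:nbhd}. First I would recall the key additivity/inequality: writing $I(\cC)$ in terms of the somewhere-injective components $C_k$ and using the refined self-intersection number $C_k\cdot C_k$ of \cite{Hutchings:beyond}*{Definition 4.7} together with the adjunction-type formula stated just before Proposition~\ref{prop:moduli}, one gets a lower bound
$$I(\cC)\ \ge\ \sum_k\Big(d_k\,\ind(C_k)+\text{(nonnegative correction terms)}\Big)+\sum_{k\ne l}d_kd_l\,(C_k\cdot C_l),$$
where the correction terms involve $\delta(C_k)$, the writhe/relative-adjunction defect, and the extra contribution $e_A(C_k)$ coming from $\rho(A)$-positive elliptic ends. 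Because all elliptic orbits of action $<\rho(A)$ are $\rho(A)$-positive, their multiple covers contribute Conley-Zehnder index exactly $1$ (the key property noted after the definition of ``$L$-positive''), which is what makes these correction terms nonnegative rather than negative. The point/loop constraints force $\sum_k d_k(2l_k+l'_k)\le I(\cC)=I$, and since each $C_k$ passing through $l_k$ points and $l'_k$ loops must have $\ind(C_k)\ge 2l_k+l'_k$ by genericity of $J$ and the nongeneric-to-generic argument of Proposition~\ref{prop:finite}, all slack must vanish: every inequality is an equality.

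\textbf{Consequences of the equalities.} From $I(\cC)=\sum_k d_k\ind(C_k)$ with all correction terms zero I would read off, component by component: $\delta(C_k)=0$ (so each $C_k$ is embedded), the relative-adjunction defect vanishes (so the writhe bound is sharp, forcing the asymptotic winding numbers to be extremal), $C_k\cdot C_l=0$ for $k\ne l$ (so distinct components are disjoint by intersection positivity), and $\ind(C_k)=2l_k+l'_k$. For a component with $d_k>1$ I would use Lemma~\ref{lem:coverFredholm}: its preimage $\tilde C_k$ must be one of the curves occurring in $\cC$, hence has $\ind(\tilde C_k)\ge b_k$, and plugging this into the index equality (where multiply covered components contribute via their Fredholm index, not $d_k$ times $\ind(C_k)$) forces $\ind(C_k)$ and the branching $b_k$ to be as small as possible; combined with the $\rho(A)$-positivity this pins $C_k$ down to $\ind(C_k)=0$ and then to the two cases in Lemma~\ref{lem:coverFredholm} --- an embedded torus ($g=1$, no ends) or an embedded $\rho(A)$-positive-elliptic plane with a multiplicity-one negative end --- i.e. $C_k$ is \emph{special}. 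Disjointness of the covered loci together with $C_k\cdot C_l=0$ also rules out two components having negative ends at covers of a common elliptic orbit, since such ends would link and contribute positively to the intersection count or to the writhe. Finally, an exceptional sphere has $\ind=0$ but is a sphere with one positive end rather than a torus or a $\rho(A)$-positive-elliptic plane (its end, if any, is at a different orbit, or it is closed), so it is not special in the sense of the definition and must therefore have multiplicity one --- which is also exactly the content of the standing hypothesis $e\cdot A\ge -1$ via the negative-index count $-d(d-1)$ recalled in Section~\ref{ECH cobordism maps}.

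\textbf{Main obstacle.} The delicate point is bookkeeping the ECH index of a reducible, possibly-multiply-covered current: $I$ is not simply additive over branched covers, and the precise inequality $I(\cC)\ge\sum_k d_k\ind(C_k)+\cdots$ requires the adjunction inequality in the punctured setting (the writhe estimates of \cite{Hutchings:lectures}*{\S3.4} and the relative-adjunction formula) applied uniformly across all orbit types, with the $\rho(A)$-flat and $\rho(A)$-positive hypotheses used precisely to guarantee that the Conley-Zehnder terms for covers of the short elliptic orbits come out equal to $1$ rather than larger. Getting every sign and every floor-function correctly accounted for --- in particular verifying that no ``hidden'' negative contribution can offset the point/loop constraint --- is the real work; the rest (embeddedness, disjointness, the special-curve trichotomy) then follows formally from saturating the inequalities, and the multiplicity-one claim for exceptional spheres is immediate once one observes they fail the \emph{special} criterion. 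This argument parallels \cite{Taubes:counting}*{\S4} and \cite{Hutchings:beyond}*{\S4--5}, and I would organize the writeup to isolate the index inequality as a preliminary lemma so that the proof of Proposition~\ref{prop:moduli} becomes a short deduction from the equality case.
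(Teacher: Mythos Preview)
Your overall architecture---apply the ECH index inequality of \cite{Hutchings:beyond}*{Proposition 4.8}, use the point/loop constraints to saturate it, then read off the structure from the equality case---is exactly the paper's approach. But there is a genuine gap in your execution.

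\textbf{The exceptional-sphere bookkeeping is wrong.} You assert that the correction terms in
\[
I(\cC)\ \ge\ \sum_k d_k\,\ind(C_k)+(\text{correction terms})+\sum_{k\ne l}d_kd_l\,C_k\cdot C_l
\]
are all nonnegative. They are not: for an exceptional sphere $E$ one has $E\cdot E=-1$, so the term $m(m-1)\,E\cdot E$ is \emph{negative} and your ``all slack must vanish'' deduction collapses. The paper handles this by splitting the current from the outset as $\cC=\{(C_k,d_k)\}\cup\{(E_\sigma,m_\sigma)\}$, isolating the exceptional-sphere pieces, and then rewriting the cross-terms using $A=\sum_k d_k[C_k]+\sum_\sigma m_\sigma[E_\sigma]$. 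This converts the bad $-\sum_\sigma m_\sigma(m_\sigma-1)$ into $+\sum_\sigma m_\sigma(m_\sigma-1)+2\sum_\sigma m_\sigma\,[E_\sigma]\cdot A$, and only \emph{now} does the hypothesis $e\cdot A\ge-1$ enter, making every term nonnegative. Equality then forces $m_\sigma(m_\sigma-1)=0$, i.e.\ $m_\sigma=1$. Your final paragraph invokes $e\cdot A\ge -1$ but never weaves it into the inequality where it is actually needed; and the claim that an exceptional sphere ``is a sphere with one positive end'' is simply false---they are closed.

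\textbf{The classification of multiply covered components is off-route.} You try to use Lemma~\ref{lem:coverFredholm} and Fredholm indices of branched covers $\tilde C_k$ to pin down the special curves. The paper does not use that lemma here at all. Instead, equality in \cite{Hutchings:beyond}*{Proposition 4.8} forces $d_k(d_k-1)\,C_k\cdot C_k=0$ for the non-exceptional components, hence $C_k\cdot C_k=0$ whenever $d_k>1$. Plugging $\ind(C_k)=0$ and $\delta(C_k)=0$ into the explicit formula
\[
C_k\cdot C_k=\tfrac12\big[2g(C_k)-2+\ind(C_k)+h(C_k)+2e_A(C_k)+4\delta(C_k)\big]=0
\]
immediately yields the trichotomy torus / elliptic plane / hyperbolic cylinder, with the last case excluded by admissibility of $\Theta$. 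This is cleaner and avoids the confusion in your sentence ``its preimage $\tilde C_k$ must be one of the curves occurring in $\cC$'', which conflates the current $\sum d_kC_k$ with an actual branched-cover map.
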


\begin{proof}
Decompose a given current as
$$\cC=\lbrace(C_k,d_k)\rbrace\cup\lbrace(E_\sigma,m_\sigma)\rbrace$$
where $\sigma$ indexes the components which are (covers of) exceptional spheres. Then \cite{Hutchings:beyond}*{Proposition 4.8} implies
\begin{equation}
\label{eqn:inequality1}
\begin{split}
I= I(\cC)\ge& \sum_kd_k I(C_k)+\sum_kd_k(d_k-1)C_k\cdot C_k-\sum_\sigma m_\sigma(m_\sigma-1)\\
&+\sum_{k\ne k'}d_kd_{k'}\, C_k\cdot C_{k'}+\sum_{\sigma\ne \sigma'}m_\sigma m_{\sigma'}\, E_\sigma\cdot E_{\sigma'}+2\sum_{\sigma,k}m_\sigma d_k\,E_\sigma\cdot C_k
\end{split}
\end{equation}
Since $C_k$ is not an exceptional sphere and $\Theta$ contains only positive hyperbolic and $\rho(A)$-positive elliptic orbits,
\begin{equation}
\label{eqn:intersectBound}
C_k\cdot C_k\ge 2\delta(C_k)
\end{equation}
Indeed, $C_k$ cannot be a plane whose end is at a positive hyperbolic orbit because $\ind(C_k)$ would then be odd hence nonzero.

Since $A=\sum_kd_k[C_k]+\sum_\sigma m_\sigma [E_\sigma]$, the inequality~\eqref{eqn:inequality1} can be rewritten as 
\begin{equation}
\label{eqn:inequality2}
I\ge \sum_kd_k I(C_k)+2\sum_kd_k(d_k-1)\delta(C_k)+\sum_\sigma m_\sigma(m_\sigma+1)+\sum_{k\ne k'}d_kd_{k'}\, C_k\cdot C_{k'}+2\sum_\sigma m_\sigma[E_\sigma]\cdot A
\end{equation}
By the index inequality $\ind(C_k)\le I(C_k)-2\delta(C_k)$ and the assumption $E\cdot A\ge-1$ for all $E\in\eE_\omega$,
we can simplify the inequality further as
\begin{equation}
\label{eqn:inequality3}
I\ge \sum_kd_k\ind(C_k)+2\sum_kd^2_k\delta(C_k)+\sum_\sigma m_\sigma(m_\sigma-1)+\sum_{k\ne k'}d_kd_{k'}\, C_k\cdot C_{k'}
\end{equation}
If $C_k$ intersects $l\ge0$ of the base points and $l'\ge0$ of the loops then $\ind(C_k)\ge2l+l'$. Since $\cC$ contains the set $\bar z\cup\bar\eta$ (such that $I$ is the cardinality of $\bar\eta$ plus twice the cardinality of $\bar z$) we then have $\sum_k\ind(C_k)\ge I$, so in addition to the fact that each term on the right-hand-side of~\eqref{eqn:inequality3} is nonnegative we deduce that~\eqref{eqn:inequality3} is an equality,
\begin{equation}
\label{eqn:equality}
I=\sum_kd_k\ind(C_k)+2\sum_kd^2_k\delta(C_k)+\sum_\sigma m_\sigma(m_\sigma-1)+\sum_{k\ne k'}d_kd_{k'}\, C_k\cdot C_{k'}
\end{equation}
and that $\sum_k\ind(C_k)= I$. We conclude from both~\eqref{eqn:equality} and $\sum_k\ind(C_k)= I$ that
	
	$\bullet$ $d_k=1$ whenever $\ind(C_k)>0$,
		
	$\bullet$ $\ind(C_k)=2l+l'$ whenever $C_k$ intersects $l\ge0$ points in $\bar z$ and $l'\ge0$ loops in $\bar\eta$,
	 
	$\bullet$ $\delta(C_k)=0$ for all $k$, hence $I(C_k)=\ind(C_k)$,
		
	$\bullet$ $m_\sigma=1$ for all $\sigma$,
	 
	$\bullet$ $C_k\cdot C_{k'}=0$ for all $k\ne k'$.

\noindent
We then invoke all bullets (and our decomposition of $A$) to simplify~\eqref{eqn:inequality2} as the equality
$$0=2\sum_\sigma \sum_kd_k[E_\sigma]\cdot [C_k]+\sum_{\sigma\ne\sigma'} [E_\sigma]\cdot[E_{\sigma'}]$$
and conclude that $E_\sigma\cdot E_{\sigma'}=E_\sigma\cdot C_k=0$ for all $\sigma\ne\sigma'$ and all $k$. Note that the vanishing of these intersection numbers and the vanishing of the singularities imply that there are no nodal curves.\footnote{The existence of nodal curves is a priori possible because the symplectic form is not exact.} Similarly, the fact that~\eqref{eqn:inequality1} is now an equality implies, by \cite{Hutchings:beyond}*{Proposition 4.8}, that $C_k$ and $C_{k'}$ for $k\ne k'$ do not both have negative ends at covers of the same $\rho(A)$-positive elliptic orbit.

Finally, to specify the components which are multiply covered when $I(C_k)=0$, note that the equality~\eqref{eqn:inequality1} implies
$$0=C_k\cdot C_k$$
if $d_k>1$. From the definition of $C_k\cdot C_k$, a zero self-intersection can only occur if $C_k$ is a torus or an embedded plane with its end at an elliptic orbit with multiplicity one (or an embedded cylinder with ends at distinct hyperbolic orbits, but such cylinders cannot be multiply covered thanks to admissibility of $\Theta$).
\end{proof}

For the upcoming proof of Proposition~\ref{prop:finite}, it is crucial to know that a generic cobordism-admissible $J$ satisfies certain Fredholm regularity properties. We define these properties now.

\begin{definition}
\label{def:transversally}
Consider a somewhere-injective $J$-holomorphic curve $C$ and its deformation operator $D_C=\dbar+\nu_C+\mu_C$. Given any connected (branched) multiple cover $f:\tilde C\to C$ there is an induced pull-back deformation operator
$$D_f=\dbar+f^\ast\nu_C+f^\ast\mu_C$$
from $\Gamma(f^\ast N_C)$ to $\Gamma(T^{0,1}\tilde C\otimes f^\ast N_C)$. This Fredholm operator satisfies $\ind(D_f)=\ind(\tilde C)-2b$, where $b$ is the number of branch points of $f$ and $\ind(\tilde C)$ is the Fredholm index of the $J$-holomorphic curve $\tilde C\to C\to\overline X$ (see \cite{Wendl:automatic}*{Equation 3.18}).

For a given cobordism-admissible $J$, the moduli space $\mM_I(\varnothing,\Theta;A,\bar z,\bar\eta)$ is said to be \textit{cut out transversally} if the following is true for every current $\cC$ in the moduli space: For a component $(C,1)\in\cC$ its deformation operator $D_C$ is surjective, and for a component $(C,d)\in\cC$ with $d>1$ the pull-back $D_f$ of $D_C$ to any unbranched cover $f:\tilde C\to C$ with $\deg(f)\le d$ is injective.
\end{definition}

It turns out that regularity automatically holds for special planes (see Lemma~\ref{lemma:auto}). Likewise, Taubes was able to establish regularity for the special tori in his work on the Gromov invariants: The proof of \cite{Taubes:SWtoGr}*{Proposition 7.1} (see also the proof of \cite{Taubes:counting}*{Lemma 5.4}) explains how to perturb $J$ on special tori to achieve regularity, with one caveat. Taubes requires $J$ to be perturbed on the entire image of each torus, but the images of the tori appearing in $\mM_I(\varnothing,\Theta;A,\bar z,\bar\eta)$ can intersect the symplectization ends of $\overline X$, for which $J$ has additional constraints that do not appear in Taubes' work. Thankfully, this requirement of Taubes is weakened in \cite{Wendl:superrigidity}.\footnote{We point out that the deformation operators associated with holomorphic tori satisfy ``Petri's condition'' in the language of \cite{Wendl:superrigidity}, so the regularity statements in \cite{Wendl:superrigidity} have simpler proofs than for more general curves.} In particular, the perturbations can be made local to the curve in the cobordism region $X_0$.

\begin{proof}[Proof of Proposition~\ref{prop:finite}]
We follow the proof of \cite{Hutchings:lectures}*{Lemma 5.10} which argues that the ECH differential $\partial_\text{ECH}$ is well-defined. Suppose otherwise that there are infinitely many such currents in $\mM_I(\varnothing,\Theta;A,\bar z,\bar\eta)$. By Gromov compactness for currents, any sequence of currents in $\mM_I(\varnothing,\Theta;A,\bar z,\bar\eta)$ has a subsequence converging to a possibly broken ECH index $I$ current from $\varnothing$ to $\Theta$, with the cobordism level intersecting the points $\bar z$ and the loops $\bar\eta$. The symplectization levels of the broken holomorphic current do not have any closed components, thanks to exactness of the symplectic form (a sequence of tori sliding off the ends of $\overline X$ would have been a source of noncompactness). The cobordism level $\cC$ is an element of $\mM(\varnothing,\Theta';A',\bar z,\bar\eta)$ where $\Theta'$ is a potentially inadmissible orbit set that satisfies $\aA(\Theta')\le\rho(A)$, and $I\ge I(\cC)$. But the proof of Proposition~\ref{prop:moduli} shows that in fact $I=I(\cC)$ and that $\cC$ has no nodes. With nodes now excluded, no exceptional spheres arise in the limit if they did not originally exist.

To derive a contradiction from an infinite sequence of converging currents, we will make use of SFT compactness (for curves) in addition to Gromov compactness (for currents) as follows. We claim that there are only finitely many possibilities for the multiply covered components in our given sequence of currents. Assuming this claim for the moment, we can restrict to a subsequence $\lbrace\cC_\nu\rbrace\subset\mM_I(\varnothing,\Theta;A,\bar z,\bar\eta)$ so that the multiply covered components are the same. The remaining components of the subsequence are embedded and asymptotic to a fixed orbit subset $\Theta''\subseteq\Theta$, and by the above convergence as currents we can restrict to a further subsequence so that they represent the same relative homology class $A''\in H_2(\overline X,\varnothing,\Theta'')$ and have the same number of components. Now, $A''$ defines a ``topological complexity'' $J_0(A'')\in\ZZ$ (defined in \cite{Hutchings:revisited}*{\S6}), which by \cite{Hutchings:revisited}*{Proposition 6.14}\footnote{This proposition has an error in its statement, see the erratum to \cite{Hutchings:revisited}, but the error is vacuous in our scenario.} is an upper bound for the sum of the topological complexities of the embedded components $\lbrace C_{\nu,1},\ldots, C_{\nu,n}\rbrace$ of $\cC_\nu$,
$$\sum_{k=1}^nJ_0([C_{\nu,k}])\le J_0(A'')$$
Moreover, by \cite{Hutchings:revisited}*{Corollary 6.10} there is a genus bound on the topological complexities,
$$J_0([C_{\nu,k}])\ge 2g(C_{\nu,k})-2$$
It thus follows from these inequalities that there is a uniform bound on the genus of the embedded components, so we can restrict to a further subsequence so that the topological type of the embedded components is fixed. Now we invoke SFT compactness to obtain a convergent subsequence that converges as curves to a broken $J$-holomorphic curve.

By additivity of the ECH index, all symplectization levels have ECH index 0. Note that a sequence of closed components cannot break, because the only $I=0$ curves in a symplectization are covers of $\RR$-invariant cylinders. Using additivity of the Fredholm index in addition to nonnegativity of the Fredholm index by Lemma~\ref{lem:coverFredholm}, as in the proof of \cite{HutchingsTaubes:gluing1}*{Lemma 7.19}, we conclude that there is in fact only one level (that is, there cannot exist symplectization levels of the broken curve consisting solely of nontrivially branched covers of $\RR$-invariant cylinders). Thus, the limiting curve is unbroken.

To complete the argument that $\mM_I(\varnothing,\Theta;A,\bar z,\bar\eta)$ is a finite set, it remains to prove: 1) the aforementioned claim, and 2) that the limiting curve is isolated.

As for (1), if there were infinitely many possible multiply covered components (of a current representing the class $A$) then there would be infinitely many special curves (the underlying components). A formal repeat of the previous paragraphs shows that an infinite subsequence of such embedded curves must converge to an unbroken Fredholm index 0 curve. So it suffices to prove that this limiting curve is isolated, which is also the goal of (2).

As for (2), such isolation is only violated if a sequence of embedded curves $\lbrace C_k\rbrace$ converges to a multiply covered curve $f:\tilde C_\infty\to C_\infty$, where $C_\infty$ is a special curve. And this only occurs if $\ind(C_k)=\ind(\tilde C_\infty)=0$ for all $k$, otherwise there would be point constraints on $C_k$ and hence point constraints on $C_\infty$, contradicting the fact that $\ind(C_\infty)=0$. Thus, $\tilde C_\infty$ is an unbranched cover of $C_\infty$ by Lemma~\ref{lem:coverFredholm}. Now, such a convergent sequence $\lbrace C_k\rbrace$ would produce a nonzero element in the kernel of the pull-back $D_f$ of the deformation operator $D_{C_\infty}$, but this contradicts the fact that $\mM_I(\varnothing,\Theta;A,\bar z,\bar\eta)$ is cut out transversally for generic $J$.
\end{proof}

\subsection{Orientations and weights} 
\label{Orientations and weights}

\indent\indent
This section defines the integer weight $q(\cC)$ attached to a current $\cC=\lbrace(C_k,d_k)\rbrace$ which belongs to a given moduli space $\mM_I(\varnothing,\Theta;A,\bar z,\bar\eta)$, for a generic choice of $J$. This moduli space is assumed to be cut out transversally (see Definition~\ref{def:transversally}) and $E\cdot A\ge-1$ for all $E\in\eE_\omega$. The total weight is of the form
$$q(\cC)=\varepsilon(\cC)\prod_k r(C_k,d_k)\in\ZZ$$
where $r(C,d)$ is an integer weight attached to each component $(C,d)$, and $\varepsilon(\cC)$ is a global sign which depends on the ordering of both the set $\bar\eta$ and the set $\pP$ of positive hyperbolic orbits in $\Theta$. We specify these below.

\begin{remark}
\label{ordering}
Remember that $\Theta$ is an orbit set for the disjoint union of $N$ copies of $S^1\times S^2$. In order to identify ECH of this disjoint union with the tensor product of $N$ copies of ECH of $S^1\times S^2$, we need to choose an ordering of the $N$ copies of $S^1\times S^2$ (which is an ordering of the zero-circles of $\omega$). Once this ordering is made we can decompose $\pP$ as $\pP_1\sqcup\cdots\sqcup\pP_N$, where $\pP_k$ is the set of positive hyperbolic orbits in the orbit set $\Theta_k$ for the $k^\text{th}$ copy of $S^1\times S^2$, and $(\Theta,\fo)=\pm(\Theta_1,\fo_1)\otimes\cdots\otimes(\Theta_N,\fo_N)$. Here, the sign $\pm$ is determined by whether the ordering of $\pP$ defined by $\fo$ (dis)agrees with the ordering of $\pP_1\sqcup\cdots\sqcup\pP_N$ defined by $\fo_1\otimes\cdots\otimes\fo_N$.
\end{remark}

Given orientations of the admissible orbit sets ($\fo$ of $\Theta$), each moduli space $\mM_I(\varnothing,\Theta;A,\bar z,\bar\eta)$ is ``coherently oriented'' with the conventions in \cite{HutchingsTaubes:gluing2}*{\S9.5} (based off of \cite{BourgeoisMohnke}). In particular, $\ker D_C$ is oriented for every component $C$ of $\cC$.

We first define $r(C,1)$, following \cite{Taubes:counting}*{\S2} and \cite{HutchingsTaubes:weinsteinStable}*{\S2.5}. Recall that the set $\bar\eta$ is ordered, and labelled accordingly as $\lbrace\eta_1,\ldots,\eta_p\rbrace$. The curve $C$ intersects $l$ of the loops for some $l\in\lbrace0,\ldots,p\rbrace$, say $\lbrace\gamma_1,\ldots,\gamma_l\rbrace$ written in the order that they appear in $\bar\eta$. Denote the intersection points by $\lbrace w_1,\ldots,w_l\rbrace$. The curve $C$ then must intersect $\frac{1}{2}(I(C)-l)$ points in $\bar z$, which without loss of generality are the points $\lbrace z_1,\ldots,z_{(I(C)-l)/2}\rbrace$. From this data we build the $\RR^{I(C)}$-vector space
$$V_C:=\bigoplus^{\frac{1}{2}(I(C)-l)}_{i=1}N_{z_i}\oplus\bigoplus_{i=1}^{l}\big(N_{w_i}/\pi(T_{w_i}\gamma_i)\big)$$
where $N_x$ denotes the fiber of the normal bundle $N_C$ of $C$ over the point $x\in C$, and $\pi$ denotes the canonical projection of $T\overline X$ onto $N_C$. Then $V_C$ is oriented because the normal bundle of $C$ is oriented, the loops $\gamma_i$ are oriented, and the points $w_i$ are ordered; the ordering of the points $z_i$ does not matter because each $N_{z_i}$ is 2-dimensional. For generic $J$, the restriction map
$$\ker(D_C)\to V_C$$
is an isomorphism, and $r(C,1):=\pm1$ depending on whether this restriction map is orientation-preserving or orientation-reversing.

\bigskip
Before defining $r(C,d)$ when $d>1$, it will be useful to write out $r(C,1)$ in the cases where $I(C)=0$ and $C$ is either an exceptional sphere, a special torus, or a special plane. Then $r(C,1)$ is the modulo 2 count of spectral flow of a generic path of Fredholm first-order operators from the deformation operator $D_C$ to a complex linear operator. The following ``automatic transversality'' lemma will help us compute this spectral flow.

\begin{lemma}
\label{lemma:auto}
Let $C$ be a connected immersed $J$-holomorphic curve in $\overline X$ with ends at nondegenerate Reeb orbits $\lbrace\gamma_j\rbrace$, let $g$ denote the genus of $C$, and let $h_+$ denote the number of ends of $C$ at positive hyperbolic orbits (including even multiples of negative hyperbolic orbits). Given any Cauchy-Riemann type operator
$$D:L^2_1(N_C)\to L^2(T^{0,1}C\otimes N_C)$$
that is asymptotic to the fixed asymptotic operators $L_{\gamma_j}$ of $C$, if
$$2g-2+h_+<\ind(D)$$
then $D$ is surjective. In particular, if the inequality holds for the deformation operator $D_C$ then $C$ is transverse without any genericity assumption on $J$.
\end{lemma}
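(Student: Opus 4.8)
The statement is Wendl's punctured analogue (see \cite{Wendl:automatic}) of the automatic transversality of Hofer--Lizan--Sikorav, and the plan is to run that argument with $N_C$ in place of the ambient bundle. First I would reduce it to a vanishing statement for a cokernel. Since $D$ is asymptotic to the nondegenerate operators $L_{\gamma_j}$ it is Fredholm, and the $L^2$-pairing identifies $\coker D$ with $\ker D^\ast$, where the formal adjoint $D^\ast$ is again a Cauchy--Riemann type operator on a complex line bundle $E'$ over $C$ (built canonically from $N_C$ and the canonical bundle of $C$), whose asymptotic operators are the $-L_{\gamma_j}$, so that the extremal winding data at each end is that of $L_{\gamma_j}$ with the positive and negative parts of the spectrum interchanged. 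Thus it suffices to show $\ker D^\ast=0$; suppose instead that there is a nonzero $w\in\ker D^\ast$.

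Next I would extract constraints on $w$ from the two standard local facts about nonzero solutions of a Cauchy--Riemann type operator on a line bundle over a punctured Riemann surface. By the Carleman similarity principle $w$ cannot vanish on an open set (and since $C$ is connected this keeps $w$ nontrivial everywhere) and has only finitely many zeros, each of strictly positive order, so the algebraic zero count satisfies $Z(w)\ge0$. By the asymptotic analysis of Hofer--Wysocki--Zehnder --- the one place where nondegeneracy of the $L_{\gamma_j}$ is used --- $w$ decays exponentially near each end to a nonzero eigenfunction of the relevant asymptotic operator, hence has a well-defined asymptotic winding number with respect to a fixed trivialization $\tau$ of $N_C$ along $\gamma_j$, constrained by the extremal winding numbers $\alpha^{\pm}$ of that operator. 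Feeding these into the relative-degree identity for a section of a line bundle --- which equates the relative Chern number of $E'$ with $Z(w)$ plus the total asymptotic winding --- produces an inequality bounding the relative Chern number $c_\tau(N_C)$ from above in terms of the $\alpha^{\pm}$ at the ends.

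The final step is to combine this with the index formula $\ind(D)=-\chi(C)+2c_\tau(N_C)+CZ^{\ind}_\tau$ and the parity identities $CZ_\tau=2\alpha^-+p=2\alpha^+-p$, where $p\in\{0,1\}$ equals $0$ precisely when the end lies over a positive hyperbolic orbit (including an even cover of a negative hyperbolic orbit), i.e.\ exactly at the $h_+$ ends. Since $-\chi(C)=2g-2+(\text{number of ends})$ and $(\text{number of ends})-\sum_{j}p_j=h_+$, substituting the winding bounds together with $Z(w)\ge0$ collapses everything to $\ind(D)\le 2g-2+h_+$, contradicting the hypothesis $2g-2+h_+<\ind(D)$. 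Hence $\ker D^\ast=0$, so $D$ is surjective; applied to $D=D_C$ this is the Fredholm regularity of $C$ with no genericity assumption on $J$, and --- consistently with the main text --- it covers special planes but not special tori, where the relevant inequality fails to be strict.

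The hard part is precisely the bookkeeping in this last step: one must track the extremal winding numbers $\alpha^{\pm}$ at every end, how they behave under the negation $L_{\gamma_j}\mapsto -L_{\gamma_j}$ that governs the adjoint, and the sign with which each Conley--Zehnder term enters, so that the off-by-one discrepancies accumulate to exactly $h_+$ rather than to something proportional to the number of ends. A secondary technical point is supplying the asymptotic-convergence and winding estimates for $w$ at the ends; these are the standard asymptotics for nondegenerate Reeb orbits and, importantly for the application, involve no genericity of $J$.
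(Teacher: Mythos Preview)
The paper does not supply its own proof of this lemma; immediately after the statement it simply refers the reader to \cite{Wendl:automatic}*{Proposition 2.2}. Your proposal is a correct sketch of precisely that argument---the punctured Hofer--Lizan--Sikorav proof via the formal adjoint, Carleman similarity, HWZ asymptotics, and the winding/Conley--Zehnder bookkeeping collapsing to $\ind(D)\le 2g-2+h_+$---so there is nothing further to compare.
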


See \cite{Wendl:automatic}*{Proposition 2.2} for a proof of this lemma and for the technical definition of a ``Cauchy-Riemann type operator that is asymptotic to an asymptotic operator.'' In the case where $C$ is an exceptional sphere, Lemma~\ref{lemma:auto} implies $r(C,1)=1$. In the case where $C$ is a special torus, Lemma~\ref{lemma:auto} unfortunately does not say anything. In the case where $C$ is a special plane, its asymptotic operator over the elliptic orbit is $L$-flat and hence complex linear. Thus, its deformation operator has the form
$$D_C=\dbar+\nu_C+\mu_C$$
with complex anti-linear term $\mu_C$ asymptotic to zero along the end of $C$. We then apply Lemma~\ref{lemma:auto} to the path of Cauchy-Riemann type operators\footnote{The path of operators \eqref{eqn:path} might not arise from a path of cobordism-admissible $J$'s, which is why we have stated ``automatic transversality'' for operators that are not necessarily deformation operators of $J$-holomorphic curves.}
\begin{equation}
\label{eqn:path}
r\in[0,1]\mapsto\dbar+\nu_C+(1-r)\cdot \mu_C
\end{equation}
The asymptotic operators of~\eqref{eqn:path} are all the same, so the Fredholm index remains constant. Thus, there is no spectral flow when deforming $D_C$ to a complex linear operator and hence $r(C,1)=1$.

\bigskip
We now consider $r(C,d)$ with $d>1$. In the case where $C$ is a special torus, the explicit description of $r(C,d)$ is found in \cite{Taubes:counting}*{Definition 3.2} and will not be repeated here. Suffice to say, the weight assigned to a multiply covered torus is determined by the spectral flows of four operators on $C$ and guarantees that Taubes' Gromov invariants are well-defined. The reason is as follows: When deforming the data $(J,\omega)$, multiple covers of tori may ``pop off'' to nearby honest curves, or two tori with opposing sign may collide and annihilate, and this must be accounted for in order to obtain an actual invariant.

In the case where $C$ is a special plane, we define
$$r(C,d):=r(C,1)=1$$
because it turns out that $C$ is ``automatically super-rigid.'' To elaborate, similarly to the setup when $d=1$, we can deform any pull-back $D_f$ to a complex linear operator along a path of Cauchy-Riemann type operators such that the Fredholm index stays constant. The following lemma is a version of ``automatic transversality'' for such operators (see \cite{Wendl:automatic}*{Proposition 2.2} for a proof).

\begin{lemma}
\label{lemma:autoBranch}
Let $C$ be a connected immersed $J$-holomorphic curve in $\overline X$ with ends at nondegenerate Reeb orbits, let $f:\tilde C\to C$ be a branched cover with $b$ branch points, $\tilde g$ denote the genus of $\tilde C$, and let $\tilde h_+$ denote the number of ends of $\tilde C$ at positive hyperbolic orbits (including even multiples of negative hyperbolic orbits). Given any Cauchy-Riemann type operator
$$D:L^2_1(f^\ast N_C)\to L^2(T^{0,1}\tilde C\otimes f^\ast N_C)$$
that is asymptotic to fixed asymptotic operators of $\tilde C$ and satisfies $\ind(D)=\ind(\tilde C)-2b$, if
$$2\tilde g-2+\tilde h_++\ind(\tilde C)-2b<0$$
then $D$ is injective.
\end{lemma}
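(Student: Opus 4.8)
We would obtain Lemma~\ref{lemma:autoBranch} from Wendl's zero-counting technique \cite{Wendl:automatic}*{Proposition 2.2}, of which this statement is the ``injectivity'' counterpart to the ``surjectivity'' statement Lemma~\ref{lemma:auto}. First observe that nothing here is curve-specific: since $C$ is immersed, $N_C$ is a complex line bundle over the punctured Riemann surface underlying $C$, so $f^\ast N_C$ is a complex line bundle over the connected punctured Riemann surface underlying $\tilde C$, and $D$ is an abstract Cauchy--Riemann type operator on $f^\ast N_C$ whose asymptotic operators at the punctures of $\tilde C$ are the prescribed nondegenerate operators (pulled back to the appropriate covers of the $\gamma_j$), with $\ind(D)=\ind(\tilde C)-2b$. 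The plan is to argue by contradiction: assume $D$ is not injective and fix a nonzero $\eta\in\ker D$.

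By the similarity principle (Carleman's theorem) for solutions of Cauchy--Riemann type equations on line bundles, the zeros of $\eta$ are isolated and of positive local multiplicity, so the algebraic count of zeros $Z(\eta)$ is a well-defined nonnegative integer. This is where nondegeneracy of the $\gamma_j$, hence of the asymptotic operators, enters: by the asymptotics of Hofer--Wysocki--Zehnder and Siefring, $\eta$ is nonvanishing near each puncture $z$ and has a well-defined asymptotic winding number $\wind_z(\eta)$ relative to any unitary trivialization, bounded below at positive punctures (resp.\ above at negative punctures) by the extremal winding number of the asymptotic operator realized by its smallest positive (resp.\ largest negative) eigenvalue; in particular $Z(\eta)$ is finite. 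One now feeds these winding bounds into the relative Poincar\'e--Hopf identity, which expresses $Z(\eta)$ as the relative first Chern number $c_1^\tau(f^\ast N_C)$ minus the total asymptotic winding, and substitutes the resulting estimate on that Chern number into the Riemann--Roch formula
\[
\ind(D)=\chi(\tilde C)+2c_1^\tau(f^\ast N_C)+\sum_{z\in\Gamma^+}\mu_{CZ}^\tau(z)-\sum_{z\in\Gamma^-}\mu_{CZ}^\tau(z),
\]
where $\Gamma^{\pm}$ are the positive and negative punctures and $\mu_{CZ}^\tau(z)$ is the Conley--Zehnder index of the asymptotic operator at $z$. Keeping track of the fact that the gap between the two extremal windings of a nondegenerate asymptotic operator equals its parity in $\{0,1\}$, which vanishes precisely for the even-Conley--Zehnder ends counted by $\tilde h_+$, one arrives at
\[
\ind(D)\ \ge\ 2-2\tilde g-\tilde h_++2Z(\eta)\ \ge\ 2-2\tilde g-\tilde h_+ ,
\]
which contradicts $2\tilde g-2+\tilde h_++\ind(\tilde C)-2b=2\tilde g-2+\tilde h_++\ind(D)<0$. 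Hence $\ker D=0$. (Alternatively, $D$ is injective if and only if its formal adjoint Cauchy--Riemann type operator is surjective; the latter is asymptotic to the negated asymptotic operators, has Fredholm index $-\ind(D)$, and has the same count $\tilde h_+$ since negation preserves the parity of the Conley--Zehnder index, so Lemma~\ref{lemma:auto} applied to it yields exactly the claim.)

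The main obstacle is the last bookkeeping step: one must organize the relative first Chern numbers, the Conley--Zehnder indices of the (possibly multiply covered) Reeb orbits at the ends of $\tilde C$, and the extremal winding numbers of the asymptotic operators under a single consistent trivialization, and verify that the even-parity ends assemble into exactly $\tilde h_+$ with the correct sign; this is the content of \cite{Wendl:automatic}. The analytic inputs --- the similarity principle and the exponential-decay asymptotics at nondegenerate orbits --- are standard, but they are what force the nondegeneracy hypothesis. In the application to $r(C,d)$ for a special plane $C$ we will take $D=D_f$ and connect it, as in~\eqref{eqn:path}, through Cauchy--Riemann type operators with fixed asymptotics to a complex-linear operator; since the Fredholm index is constant along this path the hypothesis $2\tilde g-2+\tilde h_++\ind(\tilde C)-2b<0$ holds throughout it, so each operator in the family is injective, which is the assertion that $C$ is ``automatically super-rigid.''
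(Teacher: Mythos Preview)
Your proposal is correct and aligns with the paper's own treatment: the paper does not give an independent proof but simply refers to \cite{Wendl:automatic}*{Proposition 2.2}, which is exactly the source you invoke and whose zero-counting argument you sketch. Your outline of Wendl's proof --- the similarity principle, the asymptotic winding bounds, and the inequality $Z(\eta)\le c_N=\tfrac{1}{2}(\ind(D)-2+2\tilde g+\tilde h_+)$ leading to a contradiction with the hypothesis --- is accurate, and the alternative adjoint argument you mention is also valid; you have in fact supplied more detail than the paper itself.
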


\begin{definition}
A connected somewhere-injective Fredholm index 0 curve $C$ is \textit{$d$-nondegenerate} if $\ker(D_f)=0$ for all unbranched covers $f$ of $C$ of degree no greater than $d$, and is \textit{super-rigid} if $\ker(D_f)=0$ for all (branched) covers $f$ of $C$.
\end{definition}

For a special plane $C$ and a given cover $f$,
\begin{equation}
\label{eqn:indexPlane}
\ind(\tilde C)=2-2\tilde g+2b-e-\sum_{i=1}^eCZ_\tau(\gamma_i)
\end{equation}
where $e$ is the number of punctures of $\tilde C$ such that its $i^\text{th}$ end is asymptotic to the elliptic orbit $\gamma_i$, and the trivialization $\tau$ is chosen so that the Conley-Zehnder indices are as specified in Section~\ref{subsec:ECH}. By plugging~\eqref{eqn:indexPlane} into Lemma~\ref{lemma:autoBranch}, we see that not only is $C$ super-rigid but there is no spectral flow when deforming $D_f$ to a complex linear operator for any $f$. This explains why $r(C,d)=1$.

\bigskip
It remains to define $\varepsilon(\cC)$. First, note that an even (respectively, odd) Fredholm index component of $\cC$ has an even (respectively, odd) number of ends asymptotic to positive hyperbolic orbits, and it intersects an even (respectively, odd) number of loops. Second, note that an ordering of the components of $\cC$ determines a partition of $\pP$ whose ordering differs from the fixed ordering of $\pP$ by a permutation $\sigma_\pP$, and it also determines a partition of $\bar\eta$ whose ordering differs from the fixed ordering of $\bar\eta$ by a permutation $\sigma_{\bar\eta}$. Third, note that any permutation $\sigma$ has a sign $\varepsilon(\sigma)$ given by its parity. Then
$$\varepsilon(\cC):=\varepsilon(\sigma_\pP)\varepsilon(\sigma_{\bar\eta})$$
which does not depend on the ordering of the components of $\cC$.

\begin{remark}
Our definition of $\varepsilon(\cC)$ is consistent with that in Taubes' construction of the Gromov invariants \cite{Taubes:counting}*{\S2}. There are no Reeb orbits to deal with for closed symplectic 4-manifolds, so $\varepsilon(\cC)$ reduces to the sign associated with the ordering of the set of loops $\bar\eta$. That sign is well-defined, i.e. independent of the ordering of the components of $\cC$, because every component of $\cC$ is closed and thus has even Fredholm index.
\end{remark}

\subsection{Equations for chain maps and chain homotopies}
\label{Equations for chain maps and chain homotopies}

\indent\indent
The goal of this section is to show that $\Phi^I_{J,\bar z}(A,\bar\eta)$ defines an element in ECH, and to clarify its dependence on $\bar z$ and $\bar\eta$. Again, we assume $E\cdot A\ge-1$ for all $E\in\eE_\omega$.

\begin{prop}
\label{prop:cycle}
For generic $J$, the chain $\Phi^I_{J,\bar z}(A,\bar\eta)$ is a cycle,
\begin{equation}
\label{chainmap}
\partial_{\op{ECH}}\circ\Phi^I_{J,\bar z}(A,\bar\eta)=0
\end{equation}
\end{prop}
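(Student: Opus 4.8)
The plan is to show, for each admissible orbit set $\Theta^-$ on the disjoint union of $N$ copies of $S^1\times S^2$, that the coefficient $\langle\partial_{\op{ECH}}\Phi^I_{J,\bar z}(A,\bar\eta),\Theta^-\rangle$ vanishes by realizing it as the signed count of boundary points of a compact oriented $1$-manifold. That $1$-manifold is the compactified moduli space $\overline{\mM_{I+1}(\varnothing,\Theta^-;A,\bar z,\bar\eta)}$ of index-$(I+1)$ currents from $\varnothing$ to $\Theta^-$ that represent $A$ and pass through the chosen points $\bar z$ and loops $\bar\eta$: the point constraints cut dimension by $I-p$ and the loop constraints by $p$, so an index-$(I+1)$ configuration meeting all of them has expected dimension $1$. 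First I would check, repeating the analysis of Propositions~\ref{prop:moduli} and~\ref{prop:finite} with $I$ replaced by $I+1$ in the index bookkeeping but with the same constraint set, that for generic $J$ this moduli space is indeed a $1$-manifold: its components are embedded apart from special tori and planes, and exceptional-sphere components have multiplicity one since $e\cdot A\ge-1$ for every $e\in\eE_\omega$.

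Next I would identify the boundary strata of $\overline{\mM_{I+1}(\varnothing,\Theta^-;A,\bar z,\bar\eta)}$. A boundary point is a broken current with one cobordism level and some symplectization levels over $\sqcup_{k=1}^N\RR\times(S^1\times S^2)$. Since $\bar z\cup\bar\eta$ is compact and contained in $X_0$, the cobordism level inherits all of these constraints, so by genericity it has ECH index at least $I$; each nontrivial symplectization level has ECH index at least $1$, with equality forcing the form $\cC_0\sqcup C_1$ of Section~\ref{subsec:ECH}; and additivity of the ECH index forces the total to be $I+1$. Hence the only boundary stratum is a height-$2$ configuration: an index-$I$ cobordism curve through $\bar z\cup\bar\eta$ asymptotic to some admissible $\Theta^+$, glued to an index-$1$ symplectization level from $\Theta^+$ to $\Theta^-$ supported in one of the $N$ factors. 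Nodal degenerations are excluded by the equality in the proof of Proposition~\ref{prop:moduli}, and no constraint can slide off an end.

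Then I would invoke the Hutchings--Taubes gluing theorem (as used to build $\partial_{\op{ECH}}$, cf.\ \cite{HutchingsTaubes:gluing1,HutchingsTaubes:gluing2}) to see that each such height-$2$ configuration is the limit of exactly one end of the $1$-manifold, and that with the coherent orientations of \cite{HutchingsTaubes:gluing2}*{\S9} and the weights $q(\cC)=\varepsilon(\cC)\prod_k r(C_k,d_k)$ of Section~\ref{Orientations and weights} the signed count of these ends equals $\langle\partial_{\op{ECH}}\Phi^I_{J,\bar z}(A,\bar\eta),\Theta^-\rangle$; here one uses that $\partial_{\op{ECH}}$ acts on $\bigotimes_{k=1}^N ECC_\ast(S^1\times S^2,\lambda_A,1)$ by the graded Leibniz rule and that $\varepsilon(\cC)$, together with the tensor-factor sign convention of Remark~\ref{ordering}, is tailored to be compatible with this. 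Because a compact oriented $1$-manifold has signed boundary count zero, this yields~\eqref{chainmap}.

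The hard part will be the multiply covered curves: showing that $\overline{\mM_{I+1}(\varnothing,\Theta^-;A,\bar z,\bar\eta)}$ is a manifold-with-boundary near currents carrying multiply covered special components, and that the orientations and weights at its ends match the coefficients of $\partial_{\op{ECH}}\Phi^I_{J,\bar z}(A,\bar\eta)$. For special planes this reduces to the super-rigidity and vanishing of spectral flow established via Lemma~\ref{lemma:autoBranch} (so that $r(C,d)=1$), for special tori it requires importing Taubes' gluing and weight analysis from \cite{Taubes:counting}, and for exceptional spheres it is controlled by the hypothesis $e\cdot A\ge-1$; a secondary nuisance is keeping track of signs across the $N$ tensor factors and through the $L$-flat modification defining $\lambda_A$.
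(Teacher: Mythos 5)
Your overall strategy matches the paper's: analyze the ends of $\mM_{I+1}(\varnothing,\Theta^-;A,\bar z,\bar\eta)$ and invoke Hutchings--Taubes gluing. But there are two genuine gaps, and they fall precisely where the proof has new content beyond citing \cite{HutchingsTaubes:gluing1}.

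First, your identification of the boundary strata is incomplete. SFT compactness does not produce only ``height-$2$'' configurations with an admissible intermediate orbit set $\Theta^+$; the limit can have additional symplectization levels consisting of \emph{connectors} (branched covers of $\RR$-invariant cylinders, which have ECH index $0$ and nonnegative Fredholm index), and the intermediate orbit set need \emph{not} be admissible. The latter point is why multiply covered index-$0$ \emph{cylinders} at hyperbolic orbits can appear in the cobordism level even though the top orbit set $\Theta$ is admissible -- a positive hyperbolic orbit may acquire multiplicity $>1$ at the break. Your proposal never mentions these cylinders, and they are one of the two exceptional cases the paper must handle separately.

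Second, and consequently, your claim that ``each such height-$2$ configuration is the limit of exactly one end of the $1$-manifold'' is false in general, which undermines the naive ``signed boundary count of a compact oriented $1$-manifold is zero'' picture. The whole point of the obstruction-bundle gluing theorem is that the number of ends of $\mM_{I+1}$ limiting to a given broken configuration is a gluing coefficient that must be computed, and it is not always $1$. The paper shows: for the standard configurations (no multiply covered punctured cobordism components) the obstruction-bundle count from \cite{HutchingsTaubes:gluing1}*{Theorem 7.20} gives $1$; for a $d$-fold unbranched cover of a cylinder the gluings come in pairs of opposite sign and the signed count is $0$; for a $d$-fold unbranched cover of a special plane the partition condition $p^+_\gamma(d)=(1,\dots,1)$ and indistinguishability of the $d$ disjoint sheets force exactly one gluing, consistent with $r(C,d)=1$. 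You flag multiply covered curves as ``the hard part'' but point to the wrong cases (special \emph{tori} are closed, cannot break, and are irrelevant to the gluing analysis here -- they matter for Proposition~\ref{prop:finite}, not for this proposition), and you never carry out the gluing count that actually makes the argument close.

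So: right skeleton, but the proof as written would not go through. You need to (i) allow connectors and inadmissible intermediate orbit sets in the broken limit, (ii) isolate the two genuinely new degenerations -- unbranched covers of special planes and of embedded cylinders at hyperbolic orbits -- and (iii) compute the gluing coefficients for each, rather than assuming the coefficient is $1$.
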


The proof relies on a gluing theorem, for which we need some information about the asymptotics of the curves that are to be glued. The multiplicities of the positive (respectively, negative) ends of a connected curve asymptotic to a given orbit $\gamma$ defines a positive (respectively, negative) partition of the total multiplicity $m$ of $\gamma$. A key fact is that a nontrivial component of a holomorphic
current that contributes to the ECH differential or the $U$-map satisfies the \textit{partition conditions} \cite{Hutchings:lectures}*{\S3.9}, which means the partitions $p^\pm_\gamma(m)$ are uniquely determined by the orbit $\gamma$. For example, if $\gamma$ is elliptic with rotation class $\theta\in(0,\frac{1}{m})\mod 1$ then $p^+_\gamma(m)=(1,\ldots,1)$ and $p^-_\gamma(m)=(m)$.

\begin{proof}[Proof of Proposition~\ref{prop:cycle}]
To prove the chain map equation~\eqref{chainmap} we fix an admissible orbit set $\Theta$ and analyze the ends of $\mM_{I+1}(\varnothing,\Theta;A,\bar z,\bar\eta)$; here we clarify that the cardinality of $\bar z$ remains as $\frac{1}{2}(I-p)$ and the cardinality of $\bar\eta$ remains as $p\le I$. A broken curve arising as a limit of curves in $\mM_{I+1}(\varnothing,\Theta;A,\bar z,\bar\eta)$ cannot contain a multiply covered \textit{punctured} component in the cobordism level, except for a cover of a special plane or an embedded cylinder\footnote{\label{footnote:cylinders}There can exist multiple covers of index 0 embedded cylinders at hyperbolic orbits, because the orbit set between the levels of the broken curve need not be an \textit{admissible} orbit set.} (asymptotic to hyperbolic orbits). This follows from the arguments in the proof of Proposition~\ref{prop:finite}, for which the cobordism level $\cC$ now satisfies
$$I+1\ge I(\cC)\ge I$$

If there were no such multiply covered punctured components, the proof of $\partial_\text{ECH}^2=0$ in \cite{HutchingsTaubes:gluing1} could be copied to conclude that the number of signed gluings is 1 in the case needed to prove~\eqref{chainmap}. That is, the proof of \cite{HutchingsTaubes:gluing1}*{Lemma 7.23} would carry over verbatim to show that a broken curve consists of

	$\bullet$ a Fredholm/ECH index $I$ curve in the cobordism level (intersecting $\bar z$ and $\bar\eta$),
	
	$\bullet$ a Fredholm/ECH index 1 curve in the symplectization level, and
	
	$\bullet$ possibly additional levels between them consisting of \textit{connectors}, i.e.
	
	\indent~~~branched covers of $\RR$-invariant cylinders. 
	
\noindent The obstruction bundle ``gluing analysis'' used to prove \cite{HutchingsTaubes:gluing1}*{Theorem 7.20} would then prove~\eqref{chainmap}.

\begin{remark}
Multiple covers of exceptional spheres do not arise in this process: two curves having nonnegative index cannot glue together to form a negative index curve. This is consistent with the fact that the exceptional spheres are rigid and a sequence of such spheres do not break.
\end{remark}

It thus suffices to show that there is no further obstruction bundle gluing needed when our multiply covered curves are introduced. It also suffices to consider the broken limit of a sequence of Fredholm index $I+1$ curves, because in a convergent subsequence of our ECH index $I+1$ curves the multiply covered components (having ECH index 0) do not break, by the same argument in the proof of Proposition~\ref{prop:finite}. Now the multiple covers in the cobordism level must be unbranched, otherwise the Fredholm index would be too big (see Lemma~\ref{lem:coverFredholm}). Next, note that an unbranched cover of a plane (respectively, cylinder) is necessarily disjoint copies of a plane (respectively, cylinder), thanks to the Riemann-Hurwitz formula. Let's first analyze the cylinders, and then the planes:

For each $d>1$ we can compute the number of ways to glue a $d$-fold cover of a cylinder using the same reasoning as in the proof of $\partial_\text{ECH}^2=0$ which computes the number of gluings for two embedded ECH index 1 curves along hyperbolic orbits. In particular, it follows from \cite{HutchingsTaubes:gluing1}*{Lemma 1.7} that there are no connectors, otherwise their Fredholm index would be too big. Thus, the multiply covered cylinder would have to glue directly to a curve in the sympletization level, for which the positive partition conditions are satisfied. Subsequently, there are either no ways to glue or there are an even number of ways to glue, given by the number of permutations of the ends of the cylinders that glue (see \cite{HutchingsTaubes:gluing1}*{\S 1.5}). And as explained in \cite{HutchingsTaubes:gluing1}*{Remark 1.5}, half of those gluings have one sign and the other half have the other sign, so the total signed count of gluings is zero.

For each $d>1$ we claim that there is exactly one way to glue a $d$-fold cover of a special plane. If this multiply covered plane were to glue to an ECH index 1 curve with a connector inbetween, then the gluing of the multiply covered plane and the connector would be another (multiply covered) plane. But the only way this glued curve could have Fredholm index 0 is for the connector to be a disjoint union of $\RR$-invariant cylinders, thanks to Lemma~\ref{lem:coverFredholm}, so there is no nontrivial gluing. Thus, the multiply covered plane would have to glue directly to a curve in the symplectization level, for which the positive partition conditions are satisfied. That means the ends of the multiply covered plane satisfy $p^+_\gamma(d)=(1,\ldots,1)$, where $\gamma$ is the elliptic orbit which the underlying plane is asymptotic to, and thus there is exactly one possible gluing (the $d$ disjoint copies of the plane are indistinguishable, so there are no permutations for matching the ends of the curves).
\end{proof}

\bigskip
The cycle $\Phi^I_{J,\bar z}(A,\bar\eta)$ a priori depends on the choice of loops $\bar\eta$ and points $\bar z$ (and $J$). We now show that each loop in $\bar\eta$ can move around in its homotopy class and each point in $\bar z$ can move into the ends of $\overline X$, without affecting the induced homology class in ECH. Our argument follows \cite{HutchingsTaubes:weinsteinStable}*{\S 2.5} which argues that the $U$-map does not depend on the choice of base point.

\begin{prop}
\label{prop:U}
Fix an element $[\bar\eta]\in\Lambda^p(H_1(X;\ZZ)/\operatorname{Torsion})$ and an ordering of the zero-circles of $\omega$. For generic $J$, the homology class $Gr^I_{X,\omega,J}(A,[\bar\eta])$ represented by the chain $\Phi^I_{J,\bar z}(A,\bar\eta)$ does not depend on the choice of $\bar z$ nor on the choice of representative $\bar\eta\subset X_0$ for $[\bar\eta]$. Furthermore,
$$Gr^I_{X,\omega,J}(A,[\bar\eta])=U^{(I-p)/2}\circ Gr^p_{X,\omega,J}(A,[\bar\eta])\in \bigotimes_{k=1}^NECH_\ast(S^1\times S^2,\xi_0,1)$$
where $U^{(I-p)/2}$ denotes $\frac{1}{2}(I-p)$ compositions of any of the $U$-maps from components of $\partial X_0$.
\end{prop}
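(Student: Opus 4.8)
The plan is to prove the three assertions — independence of $\bar z$, independence of the chosen representative $\bar\eta$ of $[\bar\eta]$, and the $U$-map formula — in that order, following \cite{HutchingsTaubes:weinsteinStable}*{\S2.5}; the last assertion will also re-derive the first. Independence of $\bar z$ is the easiest. Since $X_0-\bar\eta$ is connected (as $\bar\eta$ is one-dimensional), any two admissible configurations $\bar z,\bar z'$ are joined by a generic path $\{\bar z(s)\}_{s\in[0,1]}$ of disjoint configurations in $X_0-\bar\eta$, and for generic such a path each $\mM_I(\varnothing,\Theta;A,\bar z(s),\bar\eta)$ is cut out transversally, so $\bigcup_s\{s\}\times\mM_I(\varnothing,\Theta;A,\bar z(s),\bar\eta)$ is a one-manifold. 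I would argue it is compact exactly as in the proof of Proposition~\ref{prop:finite}: a Gromov/SFT limit with a nontrivial symplectization level would have a cobordism level of ECH index $<I$ still meeting all of $\bar z(s_\infty)$ and $\bar\eta$, which the constraint count forbids, and exactness of $\omega$ rules out closed components escaping the ends. Since the coherent orientations vary continuously, the signed count $\sum_{\cC}q(\cC)$ is then locally constant in $s$ for each $\Theta$, whence $\Phi^I_{J,\bar z}(A,\bar\eta)$ is independent of $\bar z$ already on the chain level.

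For the dependence on $\bar\eta$ I would follow Taubes \cite{Taubes:counting}*{\S2}. It suffices to change one loop $\eta_i$ to a loop $\eta_i'$ with $n[\eta_i]=n[\eta_i']$ in $H_1(X;\ZZ)$ for some $n\ge1$ (taking $n=1$ unless a torsion class is involved), because reordering the $\eta_j$'s affects only the sign $\varepsilon(\cC)$ in $q(\cC)$, which is precisely the Koszul sign needed for $[\eta_1]\wedge\cdots\wedge[\eta_p]\mapsto Gr^I_{X,\omega,J}(A,[\bar\eta])$ to descend to $\Lambda^p$. As $H_1(X_0;\ZZ)\to H_1(X;\ZZ)$ is injective (Mayer--Vietoris for $X=X_0\cup\nN$), there is a generic $2$-chain $W$ in $X_0$, disjoint from $\bar z$ and from $\bar\eta\setminus\{\eta_i\}$, with $\partial W=n(\eta_i-\eta_i')$. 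I would then consider the moduli space of ECH index $I$ currents representing $A$ that meet $\bar z$, the loops $\bar\eta\setminus\{\eta_i\}$, and some point of $W$; since meeting the $2$-chain $W$ drops no dimension (the two dimensions lost to a point constraint are recovered by sliding the marked point along $W$), this is a one-manifold, and the analysis in the proof of Proposition~\ref{prop:cycle} applies to its ends. The boundary where the marked point exits through $\partial W$ contributes $\pm n\big(\Phi^I_{J,\bar z}(A,\bar\eta)-\Phi^I_{J,\bar z}(A,\bar\eta')\big)$; the ends where a symplectization level splits off have cobordism level an ECH index $I-1$ current meeting $\bar z$, $\bar\eta\setminus\{\eta_i\}$ and $W$, and the obstruction-bundle gluing of \cite{HutchingsTaubes:gluing1} (with connectors and partition conditions handled exactly as in the proof of Proposition~\ref{prop:cycle}) assembles these into $\pm\partial_{\op{ECH}}K$ for the chain $K$ counting those cobordism-level currents. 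Thus $n\big(\Phi^I_{J,\bar z}(A,\bar\eta)-\Phi^I_{J,\bar z}(A,\bar\eta')\big)$ is a boundary, and since $ECH_\ast(S^1\times S^2,\xi_0,1)$ is torsion-free in the relevant gradings (Proposition~\ref{prop:ECHSWF}) the two cycles are homologous.

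For the $U$-map formula I would push the marked points, one at a time, to infinity along the symplectization ends. Fix $z_m\in\bar z$ and a generic point $y$ (not on any Reeb orbit of $\lambda_A$) in the $k$th boundary component; by the first step I may assume $z_m$ has been slid to $(0,y)$ there, and then set $z_m=(t,y)$ with $t\to+\infty$ in the end $[0,\infty)\times(S^1\times S^2)$. The SFT-compactness argument from the proof of Proposition~\ref{prop:finite} — with multiple covers controlled by Proposition~\ref{prop:moduli} and Lemma~\ref{lem:coverFredholm} — shows that any sequence of ECH index $I$ currents through $(t,y)$ and through $\bar z\setminus\{z_m\}$ and $\bar\eta$ converges, after translation, to a broken configuration whose cobordism level is an ECH index $I-2$ current through $\bar z\setminus\{z_m\}$ and $\bar\eta$ and which contains one symplectization level in $\RR\times(S^1\times S^2)$ of ECH index $2$ through $(0,y)$ — i.e.\ a curve of the kind counted by the $U$-map of that component — together with possible connectors; and conversely every such pair glues in a unique signed way by the obstruction-bundle gluing of \cite{HutchingsTaubes:gluing1}, exactly as in the proof of Proposition~\ref{prop:cycle} (the positive partition conditions at the $U$-curve force uniqueness, and branched connectors are excluded on Fredholm-index grounds by Lemma~\ref{lem:coverFredholm}). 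The one-manifold parametrized by $t\in[0,\infty]$ then yields, on homology, $[\Phi^I_{J,\bar z}(A,\bar\eta)]=U\circ[\Phi^{I-2}_{J,\bar z\setminus\{z_m\}}(A,\bar\eta)]$, where $U$ acts as the $U$-map on the $k$th tensor factor and is independent of $y$ on homology by \cite{Hutchings:lectures}*{\S3.8}. Iterating $\frac{1}{2}(I-p)$ times empties $\bar z$ and gives $Gr^I_{X,\omega,J}(A,[\bar\eta])=U^{(I-p)/2}\circ Gr^p_{X,\omega,J}(A,[\bar\eta])$ with $Gr^p_{X,\omega,J}(A,[\bar\eta])=[\Phi^p_{J,\varnothing}(A,\bar\eta)]$; since the $U$-maps of the various components are base-point independent and pairwise commute on homology, the notation $U^{(I-p)/2}$ is unambiguous, and independence of $\bar z$ follows once more.

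The main obstacle is the compactness-plus-gluing step underlying the $U$-map formula (and the analogous step for loops): one must exclude every degenerate Gromov/SFT limit — closed components escaping the ends, nontrivially branched connectors, and multiply covered punctured pieces other than covers of special planes and of index-zero cylinders — and then run the obstruction-bundle gluing of \cite{HutchingsTaubes:gluing1} to certify that the signed count of gluings equals $1$. This is, however, essentially a repeat of the analysis already carried out in the proofs of Propositions~\ref{prop:finite} and~\ref{prop:cycle}: Proposition~\ref{prop:moduli} forces the underlying curves to be embedded away from special planes, tori, and cylinders, Lemma~\ref{lem:coverFredholm} rules out branched connectors, and the partition conditions pin down the gluings, so no essentially new input is needed; the signs throughout are dictated by the coherent orientation conventions of \cite{HutchingsTaubes:gluing2}*{\S9} built into the weights $q(\cC)$.
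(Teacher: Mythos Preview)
Your argument is correct and rests on the same mechanism as the paper's, but the organization differs. The paper handles $\bar z$-independence and the $U$-map formula in one stroke: it picks base points $y_k\in\partial X_0$ off all Reeb orbits, paths $\gamma_k\colon[0,1]\to X_0$ from $z_k$ to $y_k$, defines the chain $K_{\gamma_k}$ counting ECH index $I{-}1$ currents through $\gamma_k$ (and through $\bar z-z_k$, $\bar\eta$), and establishes the chain-homotopy identity
\[
\partial_{\op{ECH}}K_{\gamma_k}=\Phi^I_{J,\bar z}(A,\bar\eta)-U_{y_k}\circ\Phi^{I-2}_{J,\bar z-z_k}(A,\bar\eta)
\]
by counting ends and boundary points of the index-$I$ moduli space of currents through $\gamma_k$, borrowing the gluing analysis from Proposition~\ref{prop:cycle}; iterating over $k$ and passing to homology finishes. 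Loop-independence is then dispatched by ``a similar argument'' plus the isomorphism $H_1(X;\ZZ)\cong H_1(X_0;\ZZ)$. You instead prove chain-level $\bar z$-independence first (stronger than required), treat loop-independence via an explicit bounding $2$-chain (including the torsion handling via Proposition~\ref{prop:ECHSWF}, which the paper does not spell out), and phrase the $U$-map step as a push-to-infinity limiting argument rather than naming the chain homotopy. The two are equivalent once one extends $\gamma_k$ by the ray through $y_k$ into the symplectization end, but the paper's explicit $K_{\gamma_k}$ makes the $\partial_{\op{ECH}}$-term (which absorbs the finite-$t$ ECH-breaking) visible rather than hidden in your phrase ``on homology''. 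One correction: in the paper's conventions $(X_0,\omega)$ is a cobordism from $\varnothing$ to $\bigsqcup S^1\times S^2$, so the completion has only \emph{negative} ends $(-\infty,0]\times(S^1\times S^2)$, and your $t\to+\infty$ should read $t\to-\infty$.
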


\begin{proof}
Let $\Theta$ be an admissible orbit set such that $\aA(\Theta)\le\rho(A)$. Let $\lbrace y_1,\ldots,y_{(I-p)/2}\rbrace\subset\partial X_0$ be a collection of base points which do not lie on any Reeb orbit. Pair each $y_k\in\partial X_0$ with $z_k\in\bar z$ and choose embedded paths $\gamma_k:[0,1]\to X_0$ from $z_k$ to $y_k$ such that the image of $\gamma_k$ only intersects $\partial X_0$ in $y_k$. Define the chains
$$K_{\gamma_k}:=\sum_\Theta\sum_{\cC\in\mM_{I-1}(\varnothing,\Theta;A,\bar z-z_k,\bar\eta,\gamma_k)}q(\cC)\cdot\Theta\in \bigotimes_{k=1}^NECC_\ast(S^1\times S^2,\lambda_A,1)$$
where $\Theta$ indexes over the admissible orbit sets. The proofs of Proposition~\ref{prop:finite} and Proposition~\ref{prop:moduli} also work in this setting to show that $K_{\gamma_k}$ is well-defined: the decrement $I\to I-1$ and the path constraint of $\gamma_k$ is compensated by the removal of the point constraint of $z_k$.

We can check that
\begin{equation}
\label{chainhomotopy}
\partial_\text{ECH}\circ K_{\gamma_k} = \Phi^I_{J,\bar z}(A,\bar\eta) - U_{y_k}\circ\Phi^{I-2}_{J,\bar z-z_k}(A,\bar\eta)
\end{equation}
by counting ends and boundary points of the moduli space $\mM_I(\varnothing,\Theta;A,\bar z-z_k,\bar\eta,\gamma_k)$, using the same gluing analysis as in the proof of Proposition~\ref{prop:cycle}. Passing to homology and iterating through the recursive equation~\eqref{chainhomotopy} for $1\le k\le \frac{1}{2}(I-p)$, the desired result concerning the $U$-map follows.

A similar argument yields the independence of the loop $\eta_k$ in $X_0$ that represents $[\bar\eta_k]$. Note here that $H_1(X;\ZZ)\cong H_1(X_0;\ZZ)$, which follows from the homological long exact sequence for the pair $(X,X_0)$ in addition to
$$H_k(X,X_0;\ZZ)\cong H_k(\operatorname{cl}\nN,\partial\nN;\ZZ)\cong H^{4-k}(\operatorname{cl}\nN;\ZZ)\cong H^{4-k}(Z;\ZZ)$$
These isomorphisms are given respectively by excision, Poincar\'e-Lefschetz duality, and $Z$ being a deformation retraction of $\operatorname{cl}\nN$; these homologies are trivial for $k\in\lbrace1,2\rbrace$.
\end{proof}

\subsection{Gradings} 
\label{Gradings}

\indent\indent
The remaining statement to be proved in Theorem~\ref{main theorem} is the fact that the element $Gr^I_{X,\omega,J}(A,[\bar\eta])$, despite being a sum over many orbit sets, is concentrated in a single grading of ECH.

\begin{lemma}
\label{lemma:gradings}
$Gr^I_{X,\omega,J}(A,[\bar\eta])\in ECH_{g(A,I)}(-\partial X_0,\xi_0,1)$, where the grading $g(A,I)$ is determined by $A$ and $I$. In terms of the canonical absolute $\ZZ/2\ZZ$ grading on ECH, the parity of $g(A,I)$ is equal to the parity of $I$.
\end{lemma}

\begin{proof}
We prove this in a slightly more general scenario. Consider a symplectic cobordism $(X,\omega)$ from the empty set to a contact 3-manifold $(Y,\lambda)$, a homology class $\Gamma\in H_1(Y;\ZZ)$ satisfying
$$c_1(\xi)+2\PD(\Gamma)=0$$
and a relative homology class $A\in H_2(X,-Y;\ZZ)$ satisfying $\partial A=-\Gamma$. Let $\alpha$ and $\beta$ be admissible orbit sets on $(Y,\lambda)$ in the class $\Gamma$. Let $\cC$ and $\cC'$ be $J$-holomorphic currents in the completion $\overline X$ which represent $A$ and are asymptotic to $\alpha$ and $\beta$, respectively. Since $\cC$ and $\cC'$ both represent $A$, the difference
$$[\cC']-[\cC]\in H_2(Y,\alpha,\beta)$$
can be used to measure the grading difference
$$|\alpha|-|\beta|=I([\cC']-[\cC])=I(\cC')-I(\cC)$$
where the last equality follows from additivity of the ECH index. Thus, $I(\cC')=I(\cC)$ if and only if $|\alpha|=|\beta|$.

For an admissible orbit set $\Theta$ on $(Y,\lambda)$, let $\e(\Theta)$ be the number of positive hyperbolic orbits in $\Theta$, which determines the parity of $|\Theta|$ as a canonical absolute $\ZZ/2\ZZ$ grading. The ``index parity'' formula \cite{Hutchings:lectures}*{Equation 3.7} states that
$$I(\cC)\equiv\e(\Theta)\mod2$$
for $\cC\in\mM_I(\varnothing,\Theta)$. Thus, the parities of $g(A,I)$ and $I$ agree. 
\end{proof}

As mentioned in Section~\ref{Main results}, a given spin-c structure $\s\in\Spinc(X)$ determines the relative class $A=A_\s\in H_2(X_0,\partial X_0;\ZZ)$ and also the index $I=d(\s)$. It will be shown in \cite{Gerig:Gromov} that $g(A_\s,d(\s))$ is equal to $N[\xi_\ast]$, i.e.
$$Gr^{d(\s)}_{X,\omega,J}(A_\s,[\bar\eta])\in\bigotimes_{k=1}^NECH_{[\xi_\ast]}(S^1\times S^2,\xi_0,1)\cong\ZZ$$
This fact, $g(A_\s,d(\s))=N[\xi_\ast]$, makes sense for a few reasons. First, $[\xi_\ast]$ has odd parity while $N$ has parity equal to that of $b^2_+(X)-b^1(X)+1$, which is also the parity of $d(\s)$, so the parity of $N[\xi_\ast]$ agrees with that of $d(\s)$ hence $g(A_\s,d(\s))$. Second, in the case that $N\ge2$ it follows from Proposition~\ref{prop:ECHSWF} that $ECH_j(-\partial X_0,\xi_0)$ is a single copy of $\ZZ$ if and only if $j=N[\xi_\ast]$.

\appendix
\section{Appendix: introducing twisted zero-circles}

\indent\indent
In this paper we have assumed that $Z$ consists of only untwisted zero-circles. As we now clarify, a straightforward modification allows us to include twisted zero-circles as long as they are non-contractible in $X$.

In the presence of a twisted zero-circle, the corresponding boundary component of $(X_0,\omega)$ is $(S^1\times S^2,e^{-1}\lambda^\sigma_0)$. Here, $\lambda^\sigma_0$ is the pushforward of $\lambda_0$ under the double covering map $S^1\times S^2\to S^1\times S^2$ for which the nontrivial deck transformation is the fixed-point-free involution
$$\sigma(t,\theta,\varphi)=(t+\pi,\pi-\theta,-\varphi)$$
Since $\lambda_0$ is $\sigma$-invariant, the orbits in $(S^1\times S^2,\lambda_0)$ descend to the orbits in $(S^1\times S^2,\lambda^\sigma_0)$, so that $\lambda^\sigma_0$ is a Morse-Bott contact form. The images of the exceptional orbits coincide, giving a single exceptional elliptic orbit with rotation class $\sqrt{\frac{3}{2}}\mod 1$. The images of the tori $T(\theta_0)$ for $\theta_0\ne\frac{\pi}{2}$ are also tori foliated by orbits. However, the image of the torus $T(\frac{\pi}{2})$ is a Klein bottle: it is described as a closed interval family of orbits, whose endpoints lift to orbits in $(S^1\times S^2,\lambda_0)$ that are fixed set-wise by $\sigma$. These endpoints are double covers of orbits that have one-half the period of the other orbits in the interval.

The contact structure $\xi^\sigma_0$ associated with $\lambda^\sigma_0$ is also overtwisted. Although $\xi_0$ and $\xi^\sigma_0$ are not homotopic over $S^1\times S^2$ \cite{Honda:local}*{Theorem 10}, they have the same Euler class: $e(\xi^\sigma_0)$ can be computed using the pushforward of the section $\sin\theta\,\partial_\theta\in\Gamma(\xi_0)$. Thus, $\xi_0$ and $\xi^\sigma_0$ are homotopic over the 2-skeleton of $S^1\times S^2$ and their corresponding spin-c structures are the same. Subsequently,
$$ECH_j(S^1\times S^2,\xi^\sigma_0,1)\cong \Hfrom^j(S^1\times S^2, \s_\xi+1)\cong ECH_j(S^1\times S^2,\xi_0,1)$$
and
$$J(S^1\times S^2,\s_\xi)=\big\{[\xi_0],[\xi^\sigma_0]\big\}$$
for which $[\xi_\ast]$ now has even parity under the absolute $\ZZ/2\ZZ$ grading on $ECH_\ast(S^1\times S^2,\xi^\sigma_0,1)$.

The analog of Lemma~\ref{lem:nbhd} now says that the relevant modification of $\lambda^\sigma_0$ for a given class $A\in\Rel(X)$ is a nondegenerate contact form $\lambda_A$ whose orbits of action less than $\rho(A)$ are either $\rho(A)$-positive elliptic, positive hyperbolic, \textit{or negative hyperbolic.} Indeed, Bourgeois' perturbation breaks up the Klein bottle of orbits into two doubly covered negative hyperbolic orbits and an embedded elliptic orbit with rotation number slightly positive \cite{Bourgeois:thesis}*{\S 2.2, \S9.5}, while the exceptional orbit of $\lambda^\sigma_0$ can be appropriately modified because the function~\eqref{eqn:f} constructed in the proof of Lemma~\ref{lem:f} is $\sigma$-invariant.

The existence of these negative hyperbolic orbits may cause problems. The problem with the proof of Proposition~\ref{prop:finite} for $\lambda^\sigma_0$ is that the inequality~\eqref{eqn:intersectBound} is false for an embedded plane $C$ asymptotic to a negative hyperbolic orbit. In particular, the current $(C,d)$ for $d>1$ has negative ECH index $-\frac{1}{2}d(d-1)$ and may arise in the cobordism level of a broken current. Of course, if the twisted zero-circles are non-contractible in $X$ then these planes cannot exist.

\begin{bibdiv}
\begin{biblist}
\bibselect{PhD}
\end{biblist}
\end{bibdiv}

\end{document}